\newcommand{\R}{\mathbf{R}}
\newcommand{\C}{\mathbf{C}}
\newcommand{\T}{\mathbf{T}}
\newcommand{\script}[1]{{\mathcal{#1}}}
\newcommand{\A}{\script{A}}
\newcommand{\B}{\script{B}}
\newcommand{\Cgh}{\script{C}}
\newcommand{\D}{\script{D}}
\newcommand{\I}{\script{I}}
\newcommand{\E}{\mathcal{E}}
\newcommand{\Eij}{\mathcal{E}_{I,J}}
\newcommand{\EIJ}{\mathcal{E}^{I,J}}
\newcommand{\V}{\script{V}}
\newcommand{\qIJ}{q^{I,J}}
\newcommand{\qij}{q_{I,J}}
\newcommand{\Hil}{\mathcal{H}}
\newcommand{\K}{\mathcal{K}}
\newcommand{\ip}[2]{\left( \left. #1 \, \right| \, #2 \right)}
\newcommand{\hip}[2]{\langle \! \langle #1, #2 \rangle \! \rangle}
\newcommand{\hipp}[2]{\langle #1, #2 \rangle}
\newcommand{\norm}[1]{{\left\| #1 \right\|}}
\newcommand{\supp}{\operatorname{supp}}
\newcommand{\Prim}{\operatorname{Prim}}
\newcommand{\hull}{\operatorname{hull}}
\newcommand{\la}{\langle}
\newcommand{\ra}{\rangle}
\newcommand{\spa}{\operatorname{span}}
\newcommand{\go}{{G^{(0)}}}
\newcommand{\gtwo}{G^{(2)}}
\newcommand{\ho}{{H^{(0)}}}
\newcommand{\lo}{L^{(0)}}
\newcommand{\iso}{\operatorname{Iso}}
\DeclareMathOperator{\Ind}{Ind}
\newcommand{\lt}{{\rm{lt}}}
\newtheorem{prop}{Proposition}[section]
\newtheorem{thm}[prop]{Theorem}
\newtheorem{cor}[prop]{Corollary}
\newtheorem{lem}[prop]{Lemma}
\theoremstyle{definition}
\newtheorem{exmp}[prop]{Example}
\newtheorem{rem}[prop]{Remark}
\newlist{thmenum}{enumerate}{10}
\setlist[thmenum,1]{label=\textnormal{(\alph*)}}
\setlist[thmenum,2]{label=\textnormal{(\roman*)}}
\newlist{altenum}{enumerate}{10}
\setlist[altenum,1]{label=\textnormal{(\roman*)}}
\setlist[altenum,2]{label=\textnormal{(\alph*)}}
\title[Stabilization for Fell bundles over exact groupoids]{Some consequences of the stabilization theorem for Fell bundles over exact groupoids}
\author{Scott M. LaLonde}
\date{\today}
\address{Department of Mathematics, The University of Texas at Tyler, Tyler, TX 75799}
\email{slalonde@uttyler.edu}
\keywords{Fell bundle, groupoid crossed product, nuclearity, exactness, exact groupoid.}
\subjclass[2010]{46L55, 22A22}
\begin{document}

\maketitle

\begin{abstract}
	We investigate some consequences of a recent stabilization result of Ionescu, Kumjian, Sims, and Williams, which says that every Fell bundle 
	$C^*$-algebra is Morita equivalent to a canonical groupoid crossed product. First we use the theorem to give conditions that guarantee the $C^*$-algebras 
	associated to a Fell bundle are either nuclear or exact. We then show that a groupoid is exact if and only if it is ``Fell exact'', in the sense that any invariant 
	ideal gives rise to a short exact sequence of reduced Fell bundle $C^*$-algebras. As an application, we show that extensions of exact groupoids are exact
	by adapting a recent iterated Fell bundle construction due to Buss and Meyer. 
\end{abstract}

\section{Introduction}

Fell bundles over groupoids provide what is perhaps the most general notion of a groupoid action on a $C^*$-algebra. In particular, one can use Fell bundles 
to encode many different kinds of dynamical systems and $C^*$-algebraic constructions associated to groups and groupoids, including crossed products, 
twisted groupoid $C^*$-algebras and twisted crossed products, and the $C^*$-algebras of graphs and higher-rank graphs. Consequently, Fell bundles provide 
a unifying framework that allows one to study many different kinds of $C^*$-algebras simultaneously, and results generally ``trickle down'' to the various types 
of $C^*$-algebras that are modeled by them.

The ability to work in such a far-reaching setting comes at a price. There are often technical obstacles to overcome when proving results for Fell 
bundles, many of which involve delicate analyses of upper semicontinuous Banach bundles over groupoids. These issues are apparent in many of the recent
papers on the subject \cite{dana-marius,muhly-williams,aidan-dana,sims-williams2013}, which deal with amenability, ideal structure, and the much-needed
extensions of Renault's Disintegration Theorem and Renault's Equivalence Theorem to Fell bundles, among other topics.

There is some hope, however, in the form a recent stabilization trick of Ionescu, Kumjian, Sims, and Williams \cite{IKSW}. Inspired by the work of Packer 
and Raeburn on twisted crossed products \cite{packer-raeburn} and earlier work of Kumjian on \'{e}tale groupoids \cite[Corollary 4.5]{kumjian}, the authors 
constructed a canonical groupoid dynamical system $(\A, G, \alpha)$ from an arbitrary Fell bundle $p : \B \to G$ in such a way that the Fell bundle associated to
$(\A, G, \alpha)$ is equivalent to $\B$. It is an immediate consequence of this result and the equivalence theorems of \cite{muhly-williams} and \cite{sims-williams2013} 
that any Fell bundle $C^*$-algebra is Morita equivalent to a groupoid crossed product. As a result, one can now prove theorems for groupoid crossed products and 
then quickly extend them to analogous results for Fell bundles, provided the properties in question are compatible with Morita equivalence in a suitable way.

The goal of this paper is to exploit the stabilization theorem in the manner described above, with an eye toward Fell bundles over exact groupoids. We begin
with a discussion of nuclearity and exactness for Fell bundle $C^*$-algebras, with the main results following almost immediately from the stabilization theorem
and the author's previous work on groupoid crossed products \cite{lalonde2014}. More specifically, we show that if $p: \B \to G$ is a Fell bundle over an amenable
groupoid $G$ and the $C^*$-algebra $A = \Gamma_0(\go, \B)$ is nuclear, then $C^*(G, \B)$ is nuclear. Likewise, $C_r^*(G, \B)$ is exact provided $A$ is exact and 
$G$ is an exact groupoid. We then show that a groupoid $G$ is exact if and only if it is ``Fell exact'', in the sense that invariant ideals always yield short exact 
sequences of reduced Fell bundle $C^*$-algebras. We then use this result and an iterated crossed product construction of Buss and Meyer to show that extensions 
of exact groupoids are again exact.

The paper is organized as follows. We begin with some background on groupoids and Fell bundles in Section \ref{sec:prelim}. In Section 
\ref{sec:nuclear} we present our results on nuclearity and exactness, followed by a discussion on some special cases.
We shift our focus to exact groupoids in Section \ref{sec:fellexact}, and we show that a groupoid is exact if and only if it is Fell exact. Finally, Section
\ref{sec:extensions} is devoted to extensions of exact groupoids.

\section{Preliminaries}
\label{sec:prelim}

In this section we outline the necessary background information on Fell bundles over groupoids. For a groupoid $G$, we let $\go$ denote its unit space, $\gtwo$ the 
set of composable pairs, and $r, s : G \to \go$ the range and source maps, respectively. Unless otherwise specified, we assume that all groupoids 
are locally compact, Hausdorff, second countable, and come equipped with a continuous Haar system.

Let $G$ be a groupoid, and suppose $p : \B \to G$ is an upper semicontinuous Banach bundle over $G$. For each $x \in G$, we denote the fiber of $\B$ over $x$ by
$B(x)$. We say that $p : \B \to G$ is a \emph{Fell bundle} over $G$ if there is a continuous, bilinear, associative map $m : \B^{(2)} \to \B$, where
\[
	\B^{(2)} = \bigl\{ (a, b) \in \B \times \B : ( p(a), p(b) ) \in \gtwo \bigr\},
\]
and a continuous involution $b \mapsto b^*$ from $\B$ to $\B$ such that:
\begin{thmenum}
	\item $p(m(a,b)) = p(a)p(b)$ for all $(a, b) \in \B^{(2)}$;
	\item $p(b^*) = p(b)^{-1}$ for all $b \in \B$;
	\item $m(a,b)^* = m(b^*, a^*)$ for all $(a, b) \in \B^{(2)}$;
	\item for each $u \in \go$, $B(u)$ is a $C^*$-algebra with respect to the operations inherited from $\B$;
	\item For each $x \in G$, $B(x)$ is a $B(r(x))-B(s(x))$-imprimitivity bimodule with respect to the module actions induced from
		$m$ and the inner products
		\[
			{_{B(r(x))}\langle}a,b \rangle = m(a,b^*) \quad \text{and} \quad \langle a, b \rangle_{B(s(x))} = m(a^*, b).
		\]
\end{thmenum}
Since the map $m$ represents a partially-defined multiplication on $\B$, we will generally suppress it and simply write $ab$ in place of $m(a, b)$. We will also 
frequently use the shorthand $s(b)$ and $r(b)$ for $b \in \B$ to mean $s(p(b))$ and $r(p(b))$, respectively.

Since the fibers over units are $C^*$-algebras, $p : \B \vert_\go \to \go$ is an upper semicontinuous $C^*$-bundle. Consequently, the section algebra 
$A = \Gamma_0(\go, \B \vert_\go)$ is also a $C^*$-algebra. We will refer to $A$ as the \emph{$C^*$-algebra over the unit space}, or more simply, the 
\emph{unit $C^*$-algebra} of $\B$. Given the special nature of the fibers over units, we will write $A(u)$ when thinking of the fiber as a $C^*$-algebra, 
and $B(u)$ when we want to emphasize its role as an $A(u)-A(u)$-imprimitivity bimodule.

It is worth noting that the Fell bundle axioms guarantee $B(x) B(y) \subseteq B(xy)$ whenever $(x, y) \in \gtwo$. In fact, axiom (e) guarantees that multiplication
induces an isomorphism $B(x) \otimes_{A(s(x))} B(y) \cong B(xy)$ by \cite[Lemma 1.2]{muhly-williams}. In other words, our Fell bundles are always \emph{saturated}.
We also assume that all Fell bundles are separable, in the sense that $B(x)$ is a separable Banach space for all $x \in G$.

\begin{exmp}
\label{exmp:gpoidDS}
	There is one example of a Fell bundle that will be crucial throughout the paper. Let $(\A, G, \alpha)$ be a separable groupoid dynamical system, meaning
	that $\A$ is an upper semicontinuous $C^*$-bundle over $\go$ upon which $G$ acts via fiberwise isomorphisms $\alpha_x : A(s(x)) \to A(r(x))$. In order to
	build a Fell bundle, we need an upper semicontinuous Banach bundle over $G$. The natural choice is the pullback bundle $\B = r^*\A$. Note that for each 
	$x \in G$ the fiber $B(x)$ is naturally isomorphic to $B(r(x)) = A(r(x))$. We define the multiplication on $\B^{(2)}$ by
	\[
		(a, x)(b, y) = \bigl( a \alpha_x(b), xy \bigr),
	\]
	and the involution is given by
	\[
		(a, x)^* = \bigl( \alpha_x^{-1}(a^*), x^{-1} \bigr).
	\]
	It is then easy to check that axioms (a), (b), and (c) in the definition of a Fell bundle are satisfied. (See Example 2.1 of \cite{muhly-williams}.) Axiom (d) is 
	automatic, though one does need to verify that the natural operations on $A(u)$ line up with those inherited from $\B$ for all $u \in \go$. Finally, for all $x \in G$ 
	we have $B(x)=B(r(x))$, hence $B(x) \cong B(s(x))$ via $\alpha_x^{-1}$. Thus $B(x)$ is naturally a $A(r(x))-A(s(x))$-imprimitivity bimodule, and one easily checks 
	that the module actions and inner products agree with the ones inherited from $\B$. Thus $\B$ is the total space of a Fell bundle which encodes the dynamical 
	system $(\A, G, \alpha)$.
\end{exmp}

Given a Fell bundle $p : \B \to G$, we can turn the set $\Gamma_c(G, \B)$ of continuous, compactly supported sections into a convolution algebra as follows: if 
$\{\lambda^u\}_{u \in \go}$ denotes the Haar system on $G$ and $f, g \in \Gamma_c(G, \B)$, we set
\[
	f*g(y) = \int_G f(x) g(x^{-1}y) \, d\lambda^u(x).
\]
We can also define an involution on $\Gamma_c(G, \B)$ by
\[
	f^*(x) = \bigl( f(x^{-1}) \bigr)^*.
\]
One then equips $\Gamma_c(G, \B)$ with a \emph{universal norm} via $\norm{f} = \sup \norm{L(f)}$, where $L$ ranges over all $*$-representations of 
$\Gamma_c(G, \B)$ on Hilbert space that are bounded with respect to the $I$-norm (equation (1.3) of \cite{muhly-williams}). The associated completion is called
the \emph{Fell bundle $C^*$-algebra} of $\B$, denoted by $C^*(G, \B)$.

There is also a reduced norm on $\Gamma_c(G, \B)$, which is defined via \emph{regular representations}. A detailed treatment of induced representations for
Fell bundle $C^*$-algebras can be found in Section 4.1 of \cite{sims-williams2013}, so we present only the necessary details here. Let $A = \Gamma_0(\go, \B \vert_\go)$ 
be the unit $C^*$-algebra, and suppose $\pi : A \to B(\Hil)$ is a nondegenerate representation. Put $\mathsf{X}_0 = \Gamma_c(G, \B)$. Then $\mathsf{X}_0$ is a
right pre-Hilbert $A$-module under the action
\[
	(f \cdot a)(x) = f(x) a(s(x))
\]
and inner product
\[
	\hipp{f}{g}_A(u) = \int_G f(x)^* g(x) \, d\lambda_u(x).
\]
We use $\mathsf{X}$ to denote the Hilbert $A$-module obtained by completing $\mathsf{X}_0$. Note that $\Gamma_c(G, \B)$ acts on $\mathsf{X}_0$ by left convolution:
\[
	(f \cdot g)(x) = \int_G f(y) g(y^{-1} x) \, d\lambda^u(x).
\]
This action extends to an action of $C^*(G, \B)$ on $\mathsf{X}$ by adjointable operators. The induced representation $\Ind \pi$ then acts on the completion of 
$\mathsf{X} \odot \Hil$ with respect to the inner product
\[
	\ip{\xi \otimes h}{\zeta \otimes h} = \ip{\pi \bigl( \hipp{\zeta}{\xi}_A \bigr) h}{k}
\]
by
\[
	(\Ind \pi)(f)(\xi \otimes h) = (f \cdot \xi) \otimes h.
\]
If $\pi$ is taken to be faithful, then
\[
	\norm{f}_r = \norm{\Ind \pi(f)}
\]
defines a norm on $\Gamma_c(G, \B)$, called the \emph{reduced norm}. The resulting completion is the \emph{reduced Fell bundle $C^*$-algebra}, denoted by
$C_r^*(G, \B)$.

\begin{rem}
	If $p : \B \to G$ is the Fell bundle associated to a groupoid dynamical system as in Example \ref{exmp:gpoidDS}, then $C^*(G, \B) \cong \A \rtimes_\alpha G$
	\cite[Example 2.8]{muhly-williams} and $C_r^*(G, \B) \cong \A \rtimes_{\alpha, r} G$ \cite[Example 11]{sims-williams2013}.
\end{rem}

The final concept we will need is that of an equivalence between Fell bundles. Let $G$ and $H$ be groupoids endowed with Haar systems $\{\lambda_G^u\}_{u \in \go}$ 
and $\{\lambda_H^u\}_{u \in \ho}$, respectively. We say $G$ and $H$ are \emph{equivalent} if there is a locally compact Hausdorff space $Z$ such that:
\begin{itemize}
	\item $G$ and $H$ act freely and properly on the left and right of $Z$, respectively;
	\item the actions of $G$ and $H$ commute; and
	\item the anchor maps $r_Z : Z \to \go$ and $s_Z : Z \to \ho$ for the actions induce homeomorphisms $Z/H \cong \go$ and $G \backslash Z \cong \ho$.
\end{itemize}
In this case we say that $Z$ is a \emph{$G-H$-equivalence}. Note that any groupoid $G$ is equivalent to itself via $Z=G$.

Suppose $p_\B : \B \to G$ and $p_\D : \D \to H$ are Fell bundles. A \emph{$\B-\D$-equivalence} consists of a $G-H$-equivalence $Z$ and an upper semicontinuous 
Banach bundle $q : \E \to Z$ such that:
\begin{thmenum}
	\item there are commuting left and right actions of $\B$ and $\D$, respectively, on $\E$;
		
	\item there are continuous sesquilinear maps $(e, f) \mapsto {_\B \langle} e, f \rangle$ from $\E *_{s_Z} \E$ to $\B$ and $(e,f) \mapsto \langle e, f \rangle_\D$ 
		from $\E *_{r_Z} \E$ to $\D$ such that
		\begin{thmenum}
			\item $p_G \left( {_\B \langle} e, f \rangle \right) = {_G [}p(e), p(f)]$ and $p_H \left( \langle e, f \rangle_\Cgh \right) = [p(e), p(f)]_H$
			\item ${_\B \la} e, f \ra^* = {_\B \la} f, e \ra$ and $\la e, f \ra_\D^* = \la f, e \ra_\D$
			\item ${_\B \la} b \cdot e, f \ra = b \cdot {_\B \la} e, f \ra$ and $\la e, f \cdot c \ra_\D = \la e, f \ra_\D \cdot c$
			\item ${_\B \la} e, f \ra \cdot h = e \cdot \la f, h \ra_\D$
		\end{thmenum}
			
	\item for all $z \in Z$, $\E(z)$ is a $\B(r(z))-\Cgh(s(z))$-imprimitivity bimodule with respect to the operations defined in (b).
\end{thmenum}
If $q : \E \to Z$ is a $\B-\D$-equivalence, then $\mathsf{X}_0 = \Gamma_c(Z, \E)$ is a $C^*(G, \B) - C^*(H, \D)$-pre-imprimitivity bimodule with respect to the 
following operations:
\begin{align*}
	f \cdot \xi(z) &= \int_G f(x) \xi(x^{-1} \cdot z) \, d\lambda_G^{r(z)}(x) \\
	\xi \cdot g(z) &= \int_H \xi(z \cdot y) g(y^{-1}) \, d\lambda_H^{s(z)}(y) \\
	{_{C^*(G, \B)} \hip{\xi}{\eta}} &= \int_H {_\B \la} \xi(x\cdot z \cdot y), \eta(z\cdot y) \ra \, d\lambda_H^{s(z)}(y) \\
	\hip{\xi}{\eta}_{C^*(H, \D)} &= \int_G \la \xi(x^{-1} \cdot z), \eta(x^{-1} \cdot z \cdot y) \ra_{\Cgh} \, d\lambda_G^{r(z)}(x).
\end{align*}
Consequently, $C^*(G,\B)$ and $C^*(H,\D)$ are Morita equivalent; this is Renault's equivalence theorem for Fell bundles \cite[Theorem 6.4]{muhly-williams}.

Sims and Williams later showed in \cite[Theorem 14]{sims-williams2013} that $C_r^*(G, \B)$ and $C_r^*(H, \D)$ are Morita equivalent as well. They did so
by constructing a linking Fell bundle $L(\E)$ over the linking groupoid $L$ associated to the $G-H$-equivalence $Z$. They then exhibited complementary
full multiplier projections $\mathfrak{p}_G$ and $\mathfrak{p}_H$ satisfying 
\[
	\mathfrak{p}_G C_r^*(L, L(\E)) \mathfrak{p}_G \cong C_r^*(G, \B), \quad \mathfrak{p}_H C_r^*(L, L(\E)) \mathfrak{p}_H \cong C_r^*(H, \D).
\]
Hence $C_r^*(G, \B)$ and $C_r^*(H, \D)$ sit inside $C_r^*(L, L(\E))$ as complementary full corners, so $C_r^*(L, L(\E))$ serves as a linking algebra 
implementing the Morita equivalence. Indeed, $\Gamma_c(Z, \E)$ sits naturally inside $\Gamma_c(L, L(\E))$, and the imprimitivity bimodule 
$\mathfrak{p}_G C_r^*(L, L(\E)) \mathfrak{p}_H$ coming from the linking algebra is precisely the completion of $\Gamma_c(Z, \E)$ inside $C_r^*(L, L(\E))$. 

This Sims-Williams construction also works at the level of the full Fell bundle $C^*$-algebras, so $C^*(L, L(\E))$ serves as a linking algebra for $C^*(G, \B)$ and 
$C^*(H, \D)$. Their argument also shows that the $C_r^*(G, \B) - C_r^*(H, \D)$-imprimitivity bimodule $\mathfrak{p}_G C_r^*(L, L(\E)) \mathfrak{p}_H$ 
can be obtained by taking the quotient of $\mathfrak{p}_G C^*(L, L(\E)) \mathfrak{p}_H$ by the closed submodule corresponding to the kernels of the quotient
maps $C^*(G, \B) \to C_r^*(G, \B)$ and $C^*(H, \D) \to C_r^*(H, \D)$. At the risk of being overly pedantic, we now check that this construction is compatible 
with the one from \cite{muhly-williams}, so we can safely work with quotients of the usual $C^*(G, \B) - C^*(H, \D)$-imprimitivity bimodule.

\begin{prop}
\label{prop:bimoduleiso}
	The inclusion of $\Gamma_c(G, Z)$ into $\Gamma_c(L, L(\E))$ extends to a bimodule isomorphism of the $C^*(G, \B) - C^*(H, \D)$-imprimitivity bimodule 
	$\mathsf{X}$ of \cite{muhly-williams} onto the imprimitivity bimodule $\mathfrak{p}_G C^*(L, L(\E)) \mathfrak{p}_H$ of \cite{sims-williams2013}. Consequently, 
	the quotient of $\mathsf{X}$ by the closed submodule corresponding to the kernel of the quotient map $C^*(G, \B) \to C_r^*(G, \B)$ is a 
	$C_r^*(G, \B) - C_r^*(H, \D)$-imprimitivity bimodule.
\end{prop}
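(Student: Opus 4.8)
The plan is to realize both imprimitivity bimodules as completions of the common dense subspace $\Gamma_c(Z, \E)$ and to check that the two sets of bimodule operations agree there. Recall that the linking groupoid $L$ has unit space $\go \sqcup \ho$ and decomposes as a disjoint union $G \sqcup Z \sqcup Z^{\mathrm{op}} \sqcup H$, where $Z^{\mathrm{op}}$ is the opposite $H$--$G$-equivalence, and that the linking Fell bundle $L(\E)$ restricts to $\B$ over $G$, to $\D$ over $H$, to $\E$ over $Z$, and to the conjugate bundle over $Z^{\mathrm{op}}$. The multiplier projections $\mathfrak{p}_G$ and $\mathfrak{p}_H$ cut sections down to those with range (resp. source) in $\go$ (resp. $\ho$). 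First I would extend each $\xi \in \Gamma_c(Z, \E)$ by zero to a compactly supported section of $L(\E)$; since $Z$ is clopen in $L$, this gives a well-defined inclusion $\Gamma_c(Z, \E) \hookrightarrow \Gamma_c(L, L(\E))$, and because sections supported on $Z$ automatically have range in $\go$ and source in $\ho$, one has $\mathfrak{p}_G \xi \mathfrak{p}_H = \xi$, so the image lands in the corner $\mathfrak{p}_G \Gamma_c(L, L(\E)) \mathfrak{p}_H$. Conversely every element of that corner is supported on $Z$, so the inclusion is a bijection onto it.

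The heart of the argument is to verify that the four operations defining $\mathsf{X}$ in \cite{muhly-williams} coincide with those inherited from the corner $\mathfrak{p}_G C^*(L, L(\E)) \mathfrak{p}_H$. For $a \in \Gamma_c(G, \B)$, $b \in \Gamma_c(H, \D)$, and $\xi, \eta \in \Gamma_c(Z, \E)$, the corner operations are $a \cdot \xi = a * \xi$, $\xi \cdot b = \xi * b$, ${}_{C^*(G,\B)}\hip{\xi}{\eta} = \xi * \eta^*$, and $\hip{\xi}{\eta}_{C^*(H,\D)} = \xi^* * \eta$, all computed with the convolution and involution on $\Gamma_c(L, L(\E))$. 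I would unwind each using the convolution formula on $L$, the Haar system on $L$ (which restricts to $\{\lambda_G^u\}$ on $G$, to $\{\lambda_H^u\}$ on $H$, and to the appropriate transverse measures on $Z$ and $Z^{\mathrm{op}}$), and the multiplication in $L(\E)$. The support conditions force the domain of integration to collapse onto a single piece of $L$ in each case: for instance, $\xi * \eta^*$ evaluated at $x \in G$ reduces to an integral over $\{ w \in Z : r_Z(w) = r(x) \}$ of products $\xi(w) \bigl( \eta(x^{-1} \cdot w) \bigr)^*$ taken in $L(\E)$, which by definition of the linking bundle equals ${}_\B \la \xi(w), \eta(x^{-1} \cdot w) \ra$. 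After matching the multiplication and involution in $L(\E)$ with the module actions and the sesquilinear maps ${}_\B \la \cdot, \cdot \ra$ and $\la \cdot, \cdot \ra_\D$ of the equivalence, each expression collapses to the corresponding Muhly--Williams formula displayed in the excerpt. The main obstacle is purely organizational: one must keep careful track of the orientation conventions encoded in $Z^{\mathrm{op}}$ and the conjugate bundle, and confirm that the Haar system on $L$ induces precisely the measures $d\lambda_G^{r(z)}$ and $d\lambda_H^{s(z)}$ appearing in the Muhly--Williams operations, so that no stray constants or measure-theoretic discrepancies arise.

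Once the operations agree on $\Gamma_c(Z, \E)$, the inner products agree there, so the inclusion is isometric for the norm each bimodule induces on this common dense subspace. It therefore extends to an isometric bimodule isomorphism of the completion $\mathsf{X}$ onto $\mathfrak{p}_G C^*(L, L(\E)) \mathfrak{p}_H$, which is the first assertion. For the second, I would invoke the Sims--Williams description of the reduced bimodule: the quotient map $C^*(L, L(\E)) \to C_r^*(L, L(\E))$ carries the corner $\mathfrak{p}_G C^*(L, L(\E)) \mathfrak{p}_H$ onto $\mathfrak{p}_G C_r^*(L, L(\E)) \mathfrak{p}_H$, and its restrictions to the diagonal corners are exactly the quotient maps $C^*(G, \B) \to C_r^*(G, \B)$ and $C^*(H, \D) \to C_r^*(H, \D)$. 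Transporting this through the isomorphism of the first part identifies the closed submodule of $\mathsf{X}$ generated by the kernel of $C^*(G, \B) \to C_r^*(G, \B)$ with the kernel of the corner quotient, so the quotient of $\mathsf{X}$ is the $C_r^*(G, \B)$--$C_r^*(H, \D)$-imprimitivity bimodule $\mathfrak{p}_G C_r^*(L, L(\E)) \mathfrak{p}_H$ of \cite{sims-williams2013}.
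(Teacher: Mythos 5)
Your proposal is correct and follows essentially the same route as the paper: both identify $\mathfrak{p}_G \Gamma_c(L, L(\E)) \mathfrak{p}_H$ with $\Gamma_c(Z,\E)$, unwind the convolution and involution on $\Gamma_c(L, L(\E))$ using the Haar system on $L$ so that the support conditions collapse each integral onto the Muhly--Williams formulas, and conclude that the inclusion is isometric and extends to a bimodule isomorphism, with the reduced statement then following from the Sims--Williams quotient description. The paper is merely more explicit about which measures on $Z$ (the $\sigma_Z^u$ of Sims--Williams) realize the collapsed integrals, a point you correctly flag as the main bookkeeping obstacle.
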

\begin{proof}
	It is clear from \cite{sims-williams2013} that $\mathfrak{p}_G \Gamma_c(L, L(\E)) \mathfrak{p}_H = \Gamma_c(Z, \E)$, so $\mathfrak{p}_G C^*(L, L(\E)) \mathfrak{p}_H$ 
	is the completion of $\Gamma_c(Z, \E)$ with respect to the universal norm on $C^*(L, L(\E))$. 
	Therefore, it suffices to show that the inclusion of $\Gamma_c(Z, \E)$ into $\Gamma_c(L, L(\E))$ is isometric and respects the module actions and inner 
	products.
	
	Let $\{\sigma_Z^u\}_{u \in \go}$ be the family of Radon measures on $Z$ defined in equation (2.1) of \cite{sims-williams2012}, and let $\{\kappa^u\}_{u \in \lo}$ 
	denote the associated Haar system on $L$, as defined in \cite[Lemma 2.2]{sims-williams2012}. Suppose $f \in \Gamma_c(G, \B)$ and $\xi \in \Gamma_c(Z, \E)$, 
	and view both as elements of $\Gamma_c(L, L(\E))$. Then inside $\Gamma_c(L, L(E))$, we have
	\[
		f * \xi(z) = \int_L f(x) \xi(x^{-1} \cdot z) \, d\kappa^{r(z)}(x) = \int_G f(x) \xi(x^{-1} \cdot z) \, d\lambda^{r(z)}(x)
	\]
	for $z \in Z$, since the integrand is zero unless $x \in G$ and $z \in Z$. This is precisely the formula for the left $\Gamma_c(G, \B)$-action on 
	$\Gamma_c(Z, \E)$ given in \cite[Theorem 6.4]{muhly-williams}. A similar proof works for the right action.
	
	Now we turn to the inner products. 
	If $\xi, \eta \in \Gamma_c(Z, \E)$, then the $\Gamma_c(G, \B)$-valued inner product is
	\begin{align*}
		\xi * \eta^*(x) &= \int_L \xi(z) \eta^*(z^{-1} \cdot x) \, d\kappa^{r(x)}(z) \\
			&= \int_Z \xi(z) \overline{\eta(x^{-1} \cdot z)} \, d\sigma_Z^{r(x)}(z) \\
			&= \int_H \xi(z \cdot y) \overline{\eta(x^{-1} \cdot z \cdot y)} \, d\lambda^{s(z)}(y),
	\end{align*}
	where $z$ is any element of $Z$ satisfying $r(z) = r(x)$. However, for any $z \in Z$ with $r(z) = s(x)$, we have $r(x \cdot z) = r(x)$, so we can rewrite the 
	last integral as
	\begin{align*}
		\xi * \eta^*(x) &= \int_H \xi(x \cdot z \cdot y) \overline{\eta(z \cdot y)} \, d\lambda^{s(z)}(y) \\
			&= \int_H {_\B \la} \xi(x \cdot z \cdot y), \eta(z \cdot y) \ra \, d\lambda^{s(z)}(y),
	\end{align*}
	which agrees with the inner product from \cite{muhly-williams}. The proof for the $C^*(H, \D)$-valued inner product is similar. It follows that the inclusion of 
	$\Gamma_c(Z, \E)$ into $\Gamma_c(L, L(\E))$ respects the norms induced from $C^*(G, \B)$ (or from $C^*(H, \D)$). Thus the inclusion is isometric and extends to 
	an isomorphism of imprimitivity bimodules.
\end{proof}

The upshot of Proposition \ref{prop:bimoduleiso} is the ability to construct a $C_r^*(G, \B) - C_r^*(H, \D)$-imprimitivity bimodule without appealing to a linking
algebra. That is, we may work with $\Gamma_c(Z, \E)$ endowed with the operations defined in \cite{muhly-williams}, and simply complete it with respect to
the norm induced by the reduced norm on $\Gamma_c(G, \B)$.

\section{Stabilization, nuclearity and exactness}
\label{sec:nuclear}
Throughout this section, $G$ denotes a second countable, locally compact Hausdorff groupoid with Haar system $\{\lambda^u\}_{u \in \go}$, and $p : \B \to G$ is a 
separable saturated Fell bundle. We let $A = \Gamma_0(\go, \B)$ denote the unit $C^*$-algebra of $\B$.

In \cite{IKSW}, Ionescu, Kumjian, Sims, and Williams showed that the full and reduced $C^*$-algebras associated to the Fell bundle $\B$ are Morita equivalent to the 
full and reduced crossed products, respectively, coming from a canonical groupoid dynamical system. In particular, they constructed an upper semicontinuous Banach 
bundle $\V$ over $\go$ such that the following conditions hold.
\begin{itemize}
	\item Each fiber $V(u)$ is a full right Hilbert $A(u)$-module.
	\item The section algebra $V = \Gamma_0(\go, \V)$ is a full right Hilbert $A$-module.
	\item There is a natural action $\alpha$ of $G$ on $\K(\V)$, the upper semicontinuous $C^*$-bundle over $\go$ whose fibers are
		\[
			\K(\V)(u) = \K(V(u)),
		\]
		where $\K(V(u))$ denotes the set of generalized compact operators on $V(u)$.
	\item The section algebra of $\K(\V)$ can be identified with $\K(V)$, the algebra of generalized compact operators on $V$. Note that $\K(V)$ is Morita equivalent 
	to $A$ via the imprimitivity bimodule $V$.
\end{itemize}
The stabilization theorem \cite[Theorem 3.7]{IKSW} then says that there is an equivalence between $\B$ and the Fell bundle associated to the 
dynamical system $(\K(\V), G, \alpha)$. Consequently, $C^*(G, \B)$ is Morita equivalent to $\K(\V) \rtimes_\alpha G$, and $C^*_r(G, \B)$ is Morita equivalent to 
$\K(\V) \rtimes_{\alpha ,r} G$ by the equivalence theorems for full and reduced Fell bundle $C^*$-algebras \cite{muhly-williams,sims-williams2013}. 

The stabilization theorem offers the possibility that certain questions regarding Fell bundles can be answered by instead looking at the simpler case of groupoid crossed 
products. To wit, one can perhaps prove results for groupoid crossed products and then extend those results to Fell bundles via the stabilization theorem. Of course this 
line of attack is particularly effective for properties that are preserved under Morita equivalence. As our first examples, we can easily extend two of the author's previous 
results \cite[Theorems 5.1 and 6.14]{lalonde2014} for groupoid crossed products to obtain conditions that guarantee a Fell bundle $C^*$-algebra is nuclear or exact.

The following nuclearity result is already known in the special case of \emph{continuous} Fell bundles over \'{e}tale groupoids by a result of Takeishi 
\cite[Theorem 4.1]{takeishi}.

\begin{thm}
\label{thm:nuke}
	Let $G$ be a second countable locally compact Hausdorff groupoid endowed with a Haar system, $p : \B \to G$ a separable saturated Fell bundle over $G$, and 
	$A = \Gamma_0(\go, \B)$ the unit $C^*$-algebra of $\B$. If $A$ is nuclear and $G$ is measurewise amenable, then $C^*(G, \B)$ is nuclear.
\end{thm}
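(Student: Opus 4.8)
The plan is to reduce the nuclearity question for the Fell bundle $C^*$-algebra to the already-established nuclearity theorem for groupoid crossed products via the stabilization theorem. Recall that nuclearity is preserved and reflected under Morita equivalence: if two $C^*$-algebras are Morita equivalent, then one is nuclear if and only if the other is. By the stabilization theorem \cite[Theorem 3.7]{IKSW}, $C^*(G, \B)$ is Morita equivalent to the crossed product $\K(\V) \rtimes_\alpha G$ arising from the canonical dynamical system $(\K(\V), G, \alpha)$. Thus it suffices to show that $\K(\V) \rtimes_\alpha G$ is nuclear, which I would deduce from \cite[Theorem 5.1]{lalonde2014}.

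The key step is therefore to verify the hypotheses of the cited crossed product result. That theorem should assert that if $G$ is measurewise amenable and the coefficient algebra of the dynamical system is nuclear, then the associated full crossed product is nuclear. Here the coefficient algebra is $\K(\V) = \Gamma_0(\go, \K(\V))$, the section algebra of the upper semicontinuous $C^*$-bundle of generalized compact operators. So the main thing to establish is that $\K(\V)$ is nuclear. This follows from the hypothesis that $A$ is nuclear together with the final bullet point in the stabilization setup: $\K(\V)$ is Morita equivalent to $A$ via the imprimitivity bimodule $V$. Since $A$ is nuclear and nuclearity passes across Morita equivalence, $\K(\V)$ is nuclear as well.

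With these pieces in place, the argument assembles cleanly. First I would invoke the stabilization theorem to obtain the Morita equivalence $C^*(G, \B) \sim \K(\V) \rtimes_\alpha G$. Next I would observe that $\K(\V)$ is Morita equivalent to $A$, so nuclearity of $A$ gives nuclearity of $\K(\V)$. Then, since $G$ is measurewise amenable and $\K(\V)$ is nuclear, \cite[Theorem 5.1]{lalonde2014} yields that $\K(\V) \rtimes_\alpha G$ is nuclear. Finally, transporting nuclearity back along the Morita equivalence gives that $C^*(G, \B)$ is nuclear, completing the proof.

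The main obstacle, such as it is, lies not in the logical skeleton but in correctly matching the hypotheses of the cited results to the objects produced by the stabilization construction. In particular one must be careful that the dynamical system $(\K(\V), G, \alpha)$ genuinely satisfies the separability and measurability conditions required by \cite[Theorem 5.1]{lalonde2014}, and that "measurewise amenable" is exactly the amenability notion under which that theorem was proved. These are bookkeeping matters rather than genuine mathematical difficulties, since all the analytic content has been absorbed into the stabilization theorem and the prior crossed product result; the present proof is essentially a transfer argument across Morita equivalence.
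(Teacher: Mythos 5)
Your proposal is correct and follows essentially the same route as the paper: stabilize to get $C^*(G,\B)$ Morita equivalent to $\K(\V)\rtimes_\alpha G$, transfer nuclearity from $A$ to $\K(V)$ across the Morita equivalence given by $V$ (the paper cites a theorem of Huef--Raeburn--Williams for this preservation), apply the crossed product nuclearity theorem, and transport back. No gaps.
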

\begin{proof}
	Let $(\K(\V), G, \alpha)$ denote the groupoid dynamical system afforded by the stabilization theorem of \cite{IKSW}. Since $\K(V)$ and $A$ are Morita equivalent 
	and $A$ is nuclear, \cite[Theorem 15]{huefraewil} guarantees that $\K(V)$ is nuclear. Since $G$ is amenable, $\K(\V) \rtimes_\alpha G$ is nuclear by 
	\cite[Theorem 5.1]{lalonde2014}. But $C^*(G, \B)$ and $\K(\V) \rtimes_\alpha G$ are Morita equivalent by \cite[Corollary 3.8]{IKSW}, so $C^*(G, \B)$ is nuclear.
\end{proof}

A nearly identical argument gives us conditions for exactness of the reduced Fell bundle $C^*$-algebra $C_r^*(G, \B)$. First recall that if $(\A, G, \alpha)$ is a groupoid 
dynamical system, an ideal $I \subseteq A = \Gamma_0(\go, \A)$ is said to be \emph{invariant} if
\[
	\alpha_x(I(s(x)) = I(r(x))
\]
for all $x \in G$. We say $G$ is \emph{exact} if for any dynamical system $(\A, G, \alpha)$ and any invariant ideal $I \subseteq A$, the sequence
\[
	0 \to \I \rtimes_{\alpha \vert_I, r} G \to \A \rtimes_{\alpha, r} G \to \A/\I \rtimes_{\alpha^I, r} G \to 0
\]
of reduced crossed products is exact.

\begin{thm}
\label{thm:exact}
	Let $G$ be a second countable locally compact Hausdorff groupoid endowed with a Haar system, $p : \B \to G$ a separable saturated Fell bundle over $G$, and 
	$A = \Gamma_0(\go, \B)$ the unit $C^*$-algebra of $\B$. If $A$ is exact and $G$ is an exact groupoid, then $C_r^*(G, \B)$ is exact.
\end{thm}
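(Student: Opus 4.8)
The plan is to mimic the proof of Theorem \ref{thm:nuke} almost verbatim, trading nuclearity for exactness at each step and invoking the reduced versions of the relevant results. First I would invoke the stabilization theorem to produce the canonical groupoid dynamical system $(\K(\V), G, \alpha)$, together with the full right Hilbert $A$-module $V = \Gamma_0(\go, \V)$ whose fiber at each unit is full over $A(u)$. The key algebraic fact is that $\K(V)$ is Morita equivalent to $A$ via $V$, so I would like to pass the exactness of $A$ across this equivalence to conclude that $\K(V)$ is exact. For nuclearity this was handled by \cite[Theorem 15]{huefraewil}; the analogous statement here is that exactness of $C^*$-algebras is preserved under Morita equivalence, so $\K(V)$ is exact whenever $A$ is. I would cite the appropriate result (exactness is stable under Morita equivalence, e.g.\ for separable $C^*$-algebras by results of Kirchberg--Wassermann, or via the permanence property that exactness passes to hereditary subalgebras and stabilizations) to secure this.

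With $\K(V)$ exact in hand, the next step is the crossed-product stage. Here I would appeal to the reduced counterpart of \cite[Theorem 5.1]{lalonde2014}, namely \cite[Theorem 6.14]{lalonde2014}, which should assert that if $G$ is an exact groupoid and the coefficient algebra of a dynamical system is exact, then the reduced crossed product is exact. Applying this to $(\K(\V), G, \alpha)$ yields that $\K(\V) \rtimes_{\alpha, r} G$ is exact. Finally, the stabilization theorem gives that $C_r^*(G, \B)$ is Morita equivalent to $\K(\V) \rtimes_{\alpha, r} G$ by \cite[Corollary 3.8]{IKSW} (in its reduced form, as recorded in the discussion following the statement of the stabilization theorem). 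Since exactness is again preserved under Morita equivalence, I conclude that $C_r^*(G, \B)$ is exact, which completes the proof.

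The main obstacle is not any single hard computation but rather ensuring that each permanence property actually holds for \emph{exactness} rather than merely for nuclearity, and that the cited theorems are the correct reduced analogues. In particular, the crucial point is the stability of exactness under Morita equivalence, which is used twice: once to transfer exactness from $A$ to $\K(V)$, and once to transfer it from $\K(\V) \rtimes_{\alpha, r} G$ to $C_r^*(G, \B)$. I would want to verify that the statement of \cite[Theorem 6.14]{lalonde2014} is precisely that an exact groupoid acting on an exact bundle gives an exact reduced crossed product, since this is the linchpin that forces the hypothesis ``$G$ is an exact groupoid'' (as opposed to the amenability hypothesis that sufficed in Theorem \ref{thm:nuke}). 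Given that all three ingredients are available, the argument should be short and entirely parallel to that of Theorem \ref{thm:nuke}.
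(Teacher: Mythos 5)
Your proposal is correct and follows essentially the same route as the paper: stabilize to $(\K(\V), G, \alpha)$, transfer exactness from $A$ to $\K(V)$ via Morita invariance (the paper cites Katsura's \cite[Proposition A.10]{katsura} for this), apply \cite[Theorem 6.14]{lalonde2014} to get exactness of $\K(\V) \rtimes_{\alpha,r} G$, and finish with the Morita equivalence from \cite[Corollary 3.8]{IKSW}. The only difference is the particular reference used for Morita invariance of exactness, which is immaterial.
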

\begin{proof}
	Assume $A$ is exact and $G$ is exact. Since $\K(V)$ and $A$ are Morita equivalent, $\K(V)$ is exact by a theorem of Katsura 
	\cite[Proposition A.10]{katsura}. If we assume $G$ is exact, then $\K(\V) \rtimes_\alpha G$ is exact by \cite[Theorem 6.14]{lalonde2014}. It then follows again from 
	\cite[Corollary 3.8]{IKSW} that $C_r^*(G, \B)$ is exact.
\end{proof}

As mentioned above, Fell bundles provide a convenient setting in which to work, since results will immediately descend to many different types of $C^*$-algebras. For 
example, Theorems \ref{thm:nuke} and \ref{thm:exact} have immediate implications for twisted groupoid $C^*$-algebras and, more generally, twisted crossed products.

\subsection{Twisted crossed products}
One construction that is subsumed by Fell bundles is the \emph{twisted groupoid crossed product}, as introduced by Renault in \cite{renault87} and described further in 
\cite{muhly-bundles,muhly-williams}. Suppose we have a central extension of groupoids
\begin{equation}
\label{eqn:ext}
	\xymatrix{
		\go \ar[r] & S \ar[r]^i & E \ar[r]^j & G \ar[r] & \go,
	}
\end{equation}
where $S$ is a bundle of abelian groups, and that $\A \to \go$ is an upper semicontinuous $C^*$-bundle. Suppose further that $E$ acts on $\A$ via a family of isomorphisms
$\alpha = \{\alpha_e\}_{e \in E}$, and that there is a homomorphism $\chi : S \to \bigsqcup_{u \in \go} M(A(u))$ implementing the resulting action of $S$. More specifically, we
assume that the map $(t, a) \mapsto \chi(t)a$ is continuous and
\[
	\alpha_t(a) = \chi(t) a \chi(t)^*, \quad \chi(ete^{-1}) = \overline{\alpha_e}(\chi(t))
\]
for all $t \in S$, $a \in A(s(t))$, and $e \in E$ with $s(e) = r(t)$. We call $(\A, G, E, \alpha)$ a \emph{twisted dynamical system}. (Note that if $S = \go$, we recover the usual
notion of a groupoid dynamical system.) Renault then considers continuous $\A$-valued functions on $E$ that have ``compact support modulo $S$'', i.e., sections 
$f : E \to r^*\A$ satisfying
\[
	f(te) = f(e) \chi(t^{-1})
\]
for all $t \in S$. Such functions form a $*$-algebra under the operations
\[
	f*g(e) = \int_G f(e') \alpha_y \bigl( g(e'^{-1}e) \bigr) \, d\lambda^{r(e)}({j(e')}), \quad f^*(e) = \alpha_e \bigl( f(e^{-1}) \bigr)^*,
\]
and we equip this $*$-algebra with a norm by taking the supremum over all appropriately bounded representations (as in the untwisted case). The completion is the 
\emph{twisted crossed product}, denoted by $C^*(G, E, \A)$.

By adapting the setup from Example \ref{exmp:gpoidDS}, we can bring twisted crossed products under the Fell bundle umbrella. Start with the bundle $r^*\A \to E$, upon
which $S$ acts naturally via
\[
	(a, e) \cdot t = (a \chi(t)^*, te).
\]
If we put $\B = (r^*\A)/S$, then Lemmas 2.6 and 2.7 of \cite{muhly-williams} show that $p: \B \to G$ is a Fell bundle, where $p([a, e]) = j(e)$ and the operations are defined
by
\[
	[a, e][b, e'] = [a \alpha_e(b), ee'], \quad [a, e] = \bigl[ \alpha_e^{-1}(a^*), e^{-1} \bigr].
\]
Furthermore, $C^*(G, \B)$ is isomorphic to the twisted crossed product $C^*(G, E, \A)$ by \cite[Example 2.10]{muhly-williams}. 

It is also fairly straightforward to check that the unit $C^*$-algebra of $\B$ is isomorphic to $A$. Notice first that if $[a, e] \in \B \vert_{\go}$, then we have $p([a, e]) = j(e) \in \go$,
so $e \in S$. This observation gives us a way of identifying the fibers of $\B$ over units.

\begin{lem}
	For each $u \in \go$, define $\varphi_u : B(u) \to A(u)$ by
	\[
		\varphi_u([a, t]) = (a \chi(t), u),
	\]
	where $t$ is any element of $S_u$. Then $\varphi_u$ defines an isomorphism of $B(u)$ onto $A(u)$.
\end{lem}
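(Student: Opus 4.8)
The plan is to exhibit, for each unit $u \in \go$, a \emph{canonical representative} for the classes comprising $B(u)$ and then to recognize $\varphi_u$ as the map that simply extracts it. First I would record the one structural fact the whole argument rests on, namely that $\chi(t)$ is a \emph{unitary} multiplier for every $t \in S_u$. Since $\chi$ is a homomorphism, $\chi(t)$ is invertible with $\chi(t)^{-1} = \chi(t^{-1})$; and since $\alpha_t = \operatorname{Ad}\chi(t)$ must be multiplicative, comparing $\alpha_t(ab)$ with $\alpha_t(a)\alpha_t(b)$ and cancelling the invertible $\chi(t)$ forces $\chi(t)^*\chi(t) = 1$. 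Hence $\chi(t)$ is unitary and $\chi(t)^* = \chi(t^{-1})$.

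Next I would observe that every class in $B(u)$ has a unique representative of the form $[c, u]$ with $c \in A(u)$. Indeed, applying the $S$-action by $t^{-1}$ to $(a, t)$ gives $(a, t) \cdot t^{-1} = \bigl(a\chi(t^{-1})^*, t^{-1}t\bigr) = (a\chi(t), u)$, using $\chi(t^{-1})^* = \chi(t)$; hence $[a, t] = [a\chi(t), u]$. Uniqueness is immediate, since $(c', u) = (c, u) \cdot t''$ forces $t'' = u$ and then $c' = c$. Consequently $\varphi_u([a,t]) = a\chi(t)$ (viewed as the element $(a\chi(t), u)$ of $A(u)$) is exactly the assignment sending a class to the $A(u)$-component of its canonical representative, so it is a well-defined linear bijection of $B(u)$ onto $A(u)$ with inverse $c \mapsto [c, u]$.

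It then remains only to check that $\varphi_u$ is a $*$-homomorphism. For the product I would use the fiber operation $[a, t][b, t'] = [a\alpha_t(b), tt']$ to compute $\varphi_u([a,t][b,t']) = a\alpha_t(b)\chi(tt') = a\chi(t)b\chi(t)^*\chi(t)\chi(t') = a\chi(t)\, b\chi(t')$, which equals $\varphi_u([a,t])\varphi_u([b,t'])$ after invoking $\alpha_t = \operatorname{Ad}\chi(t)$ and multiplicativity of $\chi$. For the involution $[a,t]^* = [\alpha_t^{-1}(a^*), t^{-1}]$, substituting $\alpha_t^{-1}(a^*) = \chi(t)^* a^* \chi(t)$ and $\chi(t^{-1}) = \chi(t)^*$ yields $\varphi_u([a,t]^*) = \chi(t)^* a^* = (a\chi(t))^* = \varphi_u([a,t])^*$. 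Since $\varphi_u$ is then a bijective $*$-homomorphism between $C^*$-algebras, it is automatically isometric, giving the claimed isomorphism.

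I expect the main obstacle to be purely bookkeeping: keeping the twist $\chi$ consistent across the quotient by $S$, and confirming that the cancellations in the multiplicative and involutive identities genuinely reduce to the relations $\chi(t)^*\chi(t) = 1$ and $\chi(tt') = \chi(t)\chi(t')$. Once the unitarity of $\chi(t)$ is secured at the outset, every subsequent verification collapses to these two facts.
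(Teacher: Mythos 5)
Your proof is correct and follows essentially the same route as the paper: the key computations for multiplicativity and compatibility with the involution are identical, resting on $\chi(tt') = \chi(t)\chi(t')$ and $\chi(t)^* = \chi(t)^{-1}$. The only differences are organizational --- you package well-definedness and bijectivity together via the canonical representative $[a,t] = [a\chi(t), u]$ rather than checking them separately, and you make explicit the unitarity of $\chi(t)$ that the paper's computations use implicitly --- neither of which changes the substance of the argument.
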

\begin{proof}
	First note that $\varphi_u$ is well-defined: if $t' \in S_u$, then $[a \chi(t')^*, t't] = [a, t]$, and
	\[
		\varphi_u \bigl( [a \chi(t')^*, t't] \bigr) = (a\chi(t')^*\chi(t't), u) = (a \chi(t), u) = \varphi_u([a, t]).
	\]
	Clearly $\varphi_u$ is linear, and we have
	\begin{align*}
		\varphi_u \bigl( [a, t][b, t'] \bigr) &= \varphi_u \bigl( [a \alpha_t(b), tt'] \bigr) \\
			&= \bigl( a \chi(t)b\chi(t)^*\chi(tt'), u \bigr) \\
			&= \bigl( a \chi(t) b \chi(t'), u \bigr) \\
			&= \varphi_u([a, t]) \varphi_u([b, t'])
	\end{align*}
	and
	\[
		\varphi_u \bigl( [a, t]^* \bigr) = \bigl( \alpha_t^{-1}(a^*) \chi(t)^*, u \bigr) = (\chi(t)^*a^*, u) = \varphi_u([a, t]).
	\]
	Thus $\varphi_u$ is a $*$-homomorphism. If $\varphi_u([a, t]) = 0$, then $a \chi(t) = 0$, so $a=0$ and $\varphi_u$ is injective. It is clearly surjective, hence an
	isomorphism.
\end{proof}

By gluing together the fiberwise homomorphisms $\varphi_u$, we obtain a bundle map $\hat{\varphi} : \B \vert_\go \to \A$ given by $\hat{\varphi} \vert_{B(u)} = \varphi_u$.
Moreover, it is not hard to check that $\hat{\varphi}$ is continuous. If $[a_i, t_i] \to [a, t]$, then we can pass to a subnet, relabel, and assume $(a_i, t_i) \to (a, t)$ in $r^*\A$.
Then $a_i \to a$ in $\A$, so $a_i \chi(t_i) \to a \chi(t)$ by the continuity requirement on $\chi$. Hence
\[
	\hat{\varphi} \bigl( [a_i, t_i] \bigr) = \bigl( a_i \chi(t_i), s(t_i) \bigr) \to \bigl( a \chi(t), s(t) \bigr) = \hat{\varphi} \bigl( [a, t] \bigr).
\]
As discussed in \cite[Remark 3.6]{lalonde2014}, the bundle map $\hat{\varphi}$ induces a $C_0(\go)$-linear isomorphism $\Phi : \Gamma_0(\go, \B \vert_\go) \to A$. With 
this fact in hand, we have the following immediate corollary of Theorem \ref{thm:nuke}.

\begin{thm}
\label{thm:nucleartwisted}
	Let $(\A, G, E, \alpha)$ be a twisted groupoid dynamical system. If $G$ is amenable and $A$ is nuclear, then the twisted crossed product $C^*(G, E, \A)$ is nuclear.
\end{thm}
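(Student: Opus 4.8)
The plan is to recognize the twisted crossed product as an ordinary Fell bundle $C^*$-algebra and then apply Theorem~\ref{thm:nuke} essentially verbatim. Recall from the discussion preceding the statement that the bundle $\B = (r^*\A)/S$ is a Fell bundle $p : \B \to G$ with $C^*(G, \B) \cong C^*(G, E, \A)$ by \cite[Example 2.10]{muhly-williams}. Thus the entire problem reduces to verifying that this $\B$ meets the three hypotheses of Theorem~\ref{thm:nuke}.

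First I would observe that $\B$ is separable and saturated, so that it falls within the standing assumptions of Section~\ref{sec:nuclear}: separability is inherited from the underlying data $(\A, G, E, \alpha)$, while saturation is built into the construction via Lemmas 2.6 and 2.7 of \cite{muhly-williams}. Next, the $C_0(\go)$-linear isomorphism $\Phi : \Gamma_0(\go, \B \vert_\go) \to A$ furnished just above identifies the unit $C^*$-algebra of $\B$ with $A$; since $A$ is nuclear by hypothesis, so is the unit $C^*$-algebra of $\B$. Finally, the amenability of $G$ supplies the (measurewise) amenability demanded by Theorem~\ref{thm:nuke}. With all three hypotheses in place, Theorem~\ref{thm:nuke} yields that $C^*(G, \B)$ is nuclear, and transporting this conclusion across the isomorphism $C^*(G, E, \A) \cong C^*(G, \B)$ completes the argument.

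There is no genuine obstacle here: the content of the corollary resides entirely in the preparatory lemmas that pin down the structure of $\B$ and the identification of its unit $C^*$-algebra with $A$. The one point meriting a moment's attention is the mild bookkeeping between the terms ``amenable'' and ``measurewise amenable,'' but since Theorem~\ref{thm:nuke} is already established under that amenability hypothesis and the paper uses the word in that sense, no additional work is needed.
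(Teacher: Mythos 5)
Your proposal is correct and follows exactly the paper's route: the theorem is stated there as an immediate corollary of Theorem~\ref{thm:nuke}, obtained by realizing $C^*(G, E, \A)$ as $C^*(G, \B)$ for the Fell bundle $\B = (r^*\A)/S$ and using the isomorphism $\Phi$ to identify the unit $C^*$-algebra of $\B$ with $A$. Your additional remarks on separability, saturation, and the amenability bookkeeping are consistent with the paper's standing assumptions and add nothing that conflicts with its argument.
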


We should point out that Theorem \ref{thm:nucleartwisted} is not really new. Indeed, it follows from \cite[Lemme 3.3(i)]{renault87} that $C^*(G, E, \A)$ is a quotient 
of the (untwisted) crossed product $\A \rtimes_\alpha E$. Since $S$ is an abelian group bundle (hence amenable) and amenability is preserved under taking extensions of 
groupoids \cite[Theorem 5.3.14]{ananth-renault}, it follows that $E$ is amenable whenever $G$ is. Hence $\A \rtimes_\alpha G$ is nuclear if $A$ is nuclear and $G$ is 
amenable, so $C^*(G, E, \A)$ is also nuclear under these hypotheses.

On the other hand, the special case of Theorem \ref{thm:exact} for twisted dynamical systems does appear to be new. In fact, there does not even seem to be a notion of 
reduced twisted groupoid crossed products in the literature. Therefore, for a twisted dynamical system $(\A, G, E)$ we define the \emph{reduced twisted 
crossed product} $C^*_r(G, E, \A)$ to be $C_r^*(G, \B)$, where $\B$ is the Fell bundle associated to $(\A, G, E)$.

\begin{thm}
\label{thm:twistedexact}
	Let $(\A, G, E, \alpha)$ be a twisted groupoid dynamical system. If $G$ is exact and $A$ is exact, then the reduced twisted crossed product $C_r^*(G, E, \A)$ is exact.
\end{thm}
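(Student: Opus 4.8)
The plan is to treat this exactly as Theorem~\ref{thm:nucleartwisted} was treated, namely as an immediate consequence of the corresponding Fell bundle result---here Theorem~\ref{thm:exact}. By the definition adopted just above the statement, $C_r^*(G, E, \A) = C_r^*(G, \B)$, where $p : \B \to G$ is the Fell bundle associated to the twisted dynamical system $(\A, G, E, \alpha)$. Thus it suffices to verify the two hypotheses of Theorem~\ref{thm:exact} for this particular $\B$: that $G$ is exact, and that the unit $C^*$-algebra of $\B$ is exact.

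The first hypothesis is assumed outright. For the second, I would appeal to the identification of the unit $C^*$-algebra of $\B$ worked out in the preceding discussion. The fiberwise isomorphisms $\varphi_u : B(u) \to A(u)$ assemble, via the bundle map $\hat{\varphi}$, into the $C_0(\go)$-linear isomorphism $\Phi : \Gamma_0(\go, \B\vert_\go) \to A$. Since the unit $C^*$-algebra $\Gamma_0(\go, \B\vert_\go)$ of $\B$ is therefore $*$-isomorphic to $A$, and exactness is an isomorphism invariant, the assumption that $A$ is exact forces the unit $C^*$-algebra of $\B$ to be exact as well. With both hypotheses in hand, Theorem~\ref{thm:exact} yields that $C_r^*(G, \B)$ is exact, and this is precisely $C_r^*(G, E, \A)$ by definition.

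Beyond this bookkeeping there is no real obstacle; the only points requiring routine attention are the standing structural requirements on $\B$. Saturation holds automatically from the Fell bundle axioms as recorded in Section~\ref{sec:prelim}, and separability of $\B$ is inherited from the separability of the underlying twisted system. In short, the argument mirrors that of Theorem~\ref{thm:nucleartwisted} line for line, with Theorem~\ref{thm:exact} replacing Theorem~\ref{thm:nuke}.
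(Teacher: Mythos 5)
Your proposal is correct and follows exactly the route the paper intends: Theorem~\ref{thm:twistedexact} is stated as an immediate consequence of Theorem~\ref{thm:exact}, using the definition $C_r^*(G,E,\A) = C_r^*(G,\B)$ together with the preceding identification $\Gamma_0(\go, \B\vert_\go) \cong A$ via $\Phi$. Nothing further is needed.
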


As a special case of twisted crossed products, we can also study \emph{twists}, which were initially defined by Kumjian in \cite{kumjian}. Several other authors 
\cite{muhly-williams,mw92,mw95,renault08,erikdana} have since discussed twists over groupoids and their relationship to Fell bundles. If $G$ is a locally compact Hausdorff 
groupoid, a \emph{twist} over $G$ (sometimes called a $\T$-groupoid) is a central groupoid extension
\[
\xymatrix{
	\go \ar[r] & \go \times \T \ar[r]^(0.6){i} & \Sigma \ar[r]^j & G \ar[r]& \go.
}
\]
In other words, we take $S = \go \times \T$ in \eqref{eqn:ext}. It is worth noting that any continuous cocycle $\omega : \gtwo \to \T$ gives rise to a twist, though the discussion 
in \cite[Section 2]{mw92} shows that the theory of twists is more general. 

The authors of \cite{mw92} describe how to directly construct the $C^*$-algebra $C^*(G, \Sigma)$ associated to a twist, but one can also realize $C^*(G, \Sigma)$ 
as a twisted crossed product. Start with the trivial dynamical system $(\C \times \go, \Sigma, \lt)$, and define $\chi : \go \times \T \to \go \times \C$ by $\chi(t) = t$. We obtain 
a twisted dynamical system $(\C \times \go, G, \Sigma, \lt)$, and it is not hard to check that the associated Fell bundle agrees with the one built in \cite[Example 2.3]{erikdana} 
and \cite[Section 4]{renault08}. Hence $C^*(G, \Sigma, \C \times \go) = C^*(G, \Sigma)$ and $C_r^*(G, \Sigma, \C \times \go) = C_r^*(G, \Sigma)$. In light of this discussion, 
Theorems \ref{thm:nuke} and \ref{thm:exact} yield the following results for twists.

\begin{thm}
\label{thm:nucleartwist}
	Let $G$ be a second countable locally compact Hausdorff groupoid, and suppose $\Sigma$ is a twist over $G$.
	\begin{enumerate}
		\item If $G$ is measurewise amenable, then $C^*(G, \Sigma)$ is nuclear.
		\item If $G$ is exact, then $C^*_r(G, \Sigma)$ is exact.
	\end{enumerate}
\end{thm}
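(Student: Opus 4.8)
The plan is to read both statements off Theorems~\ref{thm:nuke} and~\ref{thm:exact} directly, using the reduction of a twist to a twisted crossed product that was set up just above. Concretely, the trivial system $(\C\times\go,\Sigma,\lt)$ with $\chi(t)=t$ yields a twisted dynamical system $(\C\times\go,G,\Sigma,\lt)$ whose associated Fell bundle $\B$ satisfies $C^*(G,\Sigma)=C^*(G,\B)$ and $C_r^*(G,\Sigma)=C_r^*(G,\B)$. So I would fix this $\B$ and feed it into the two general theorems, the only real input being the identity of its unit $C^*$-algebra.

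That identification is the one step worth spelling out. Here the coefficient bundle is the trivial line bundle $\A=\C\times\go$, so $\Gamma_0(\go,\A)=C_0(\go)$. By the Lemma immediately preceding this theorem --- the fiberwise isomorphisms $\varphi_u$ assembling into the $C_0(\go)$-linear isomorphism $\Phi$ --- the unit $C^*$-algebra $A=\Gamma_0(\go,\B\vert_\go)$ of the Fell bundle of a twisted dynamical system is isomorphic to the section algebra of $\A$. Hence $A\cong C_0(\go)$, which, being commutative, is nuclear and a fortiori exact. Before invoking the theorems I would also note that $\B$ satisfies their standing hypotheses: it is saturated like every Fell bundle in this paper, its fibers are one-dimensional and hence separable, and $G$ is second countable locally compact Hausdorff with a Haar system throughout.

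With these observations the conclusions are immediate. For part (1), $A\cong C_0(\go)$ is nuclear and $G$ is measurewise amenable by hypothesis, so Theorem~\ref{thm:nuke} gives nuclearity of $C^*(G,\B)=C^*(G,\Sigma)$. For part (2), $A\cong C_0(\go)$ is exact and $G$ is exact by hypothesis, so Theorem~\ref{thm:exact} gives exactness of $C_r^*(G,\B)=C_r^*(G,\Sigma)$. There is no genuine obstacle to overcome here: the result is a corollary, and the only bookkeeping --- the identification $A\cong C_0(\go)$ --- is exactly what the Lemma just proved delivers. The substance of the argument lives entirely in the stabilization-theorem-powered Theorems~\ref{thm:nuke} and~\ref{thm:exact}, not in this specialization.
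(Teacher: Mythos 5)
Your proposal is correct and follows the paper's own route exactly: realize $C^*(G,\Sigma)$ and $C_r^*(G,\Sigma)$ as the $C^*$-algebras of the Fell bundle associated to the twisted dynamical system $(\C\times\go, G, \Sigma, \lt)$, use the preceding Lemma to identify the unit $C^*$-algebra with $C_0(\go)$ (hence nuclear and exact), and apply Theorems~\ref{thm:nuke} and~\ref{thm:exact}. The paper treats this as an immediate corollary of the same discussion, so there is nothing to add.
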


Again, statement (1) in Theorem \ref{thm:nucleartwist} is already known, since $C^*(G, \Sigma)$ is a quotient of the groupoid $C^*$-algebra $C^*(\Sigma)$.

\section{Fell exact groupoids}
\label{sec:fellexact}

We now turn our attention toward a more refined analysis of Fell bundles over exact groupoids. In particular, we will use the stabilization theorem to show that if $G$ is 
exact, then an invariant ideal in the unit $C^*$-algebra of any Fell bundle over $G$ gives rise to a short exact sequence of reduced Fell bundle $C^*$-algebras.

Let $G$ be a second countable, locally compact Hausdorff groupoid, and suppose $p_\B : \B \to G$ is a separable Fell bundle over $G$ with $A = \Gamma_0(\go, \B)$. 
While $G$ does not necessarily act on the $C^*$-bundle $\B \vert_\go$ associated to $A$, it does act naturally on $\Prim A$ as follows. We identify $\Prim A$ with the 
disjoint union $\bigsqcup_{u \in \go} \Prim A(u)$ via \cite[Proposition C.5]{TFB1}, and for each $x \in G$ the Rieffel correspondence associated to $B(x)$ induces 
a homeomorphism $h_x : \Prim A(s(x)) \to A(r(x))$. We then set
\begin{equation}
\label{eq:primaction}
	x \cdot (s(x), P) = (r(x), h_x(P)).
\end{equation}
It is shown in \cite[Proposition 2.2]{dana-marius} that \eqref{eq:primaction} defines a continuous action of $G$ on $\Prim A$. We then say an ideal $I \subseteq A$ is 
\emph{invariant} if
\[
	\hull(I) = \{ P \in \Prim A : I \subseteq P \}
\]
is a $G$-invariant subset of $\Prim A$. 

If the Fell bundle $p_\B : \B \to G$ does come from a groupoid dynamical system, then we have two competing notions of invariance for ideals.
However, it is straightforward to check that the two definitions are equivalent in this case.

\begin{prop}
	Let $(\A, G, \alpha)$ be a groupoid dynamical system. An ideal $I \subseteq A$ is invariant if and only if $\hull(I)$ is a $G$-invariant subset of $\Prim A$.
\end{prop}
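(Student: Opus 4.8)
The plan is to make the $G$-action on $\Prim A$ completely explicit in the dynamical-system case, and then reduce the stated equivalence to the standard Galois correspondence between closed ideals of a $C^*$-algebra and closed subsets of its primitive ideal space. The content is essentially the observation that, for the Fell bundle of Example \ref{exmp:gpoidDS}, the Rieffel homeomorphism $h_x$ appearing in \eqref{eq:primaction} is nothing more than $P \mapsto \alpha_x(P)$.

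First I would compute $h_x$ for the imprimitivity bimodule $B(x)$ coming from $(\A, G, \alpha)$. Recall from Example \ref{exmp:gpoidDS} that $B(x) = A(r(x))$, with $A(r(x))$ acting on the left by multiplication and $A(s(x))$ acting on the right via $a \cdot c = a\alpha_x(c)$. For a closed ideal $J \subseteq A(s(x))$ one checks that the closed submodule $\overline{B(x) \cdot J}$ equals $\alpha_x(J)$, regarded as an ideal of $A(r(x))$: since $\alpha_x$ is an isomorphism, $\alpha_x(J)$ is a closed ideal of $A(r(x))$, and $\overline{A(r(x)) \alpha_x(J)} = \alpha_x(J)$. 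Because $\overline{A(r(x)) \cdot B(x)} = B(x)$, the Rieffel correspondence therefore sends $J$ to $\alpha_x(J)$, and hence $h_x(P) = \alpha_x(P)$ for each $P \in \Prim A(s(x))$. Thus \eqref{eq:primaction} reads $x \cdot (s(x), P) = (r(x), \alpha_x(P))$.

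Next I would unwind $G$-invariance of $\hull(I)$. Under the identification $\Prim A \cong \bigsqcup_u \Prim A(u)$ of \cite[Proposition C.5]{TFB1}, a primitive ideal lying over $u$ contains $I$ exactly when it contains the fibre $I(u)$, so $\hull(I) \cap \Prim A(u) = \hull(I(u))$. As $\alpha_x$ is an isomorphism, the induced map $P \mapsto \alpha_x(P)$ carries $\hull(I(s(x)))$ onto $\hull(\alpha_x(I(s(x))))$. Since a groupoid action is by homeomorphisms, invariance of a subset $W$ forces, for each arrow $x$, the set equality $x \cdot (W \cap \Prim A(s(x))) = W \cap \Prim A(r(x))$ (apply invariance to both $x$ and $x^{-1}$). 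Combining these observations, $G$-invariance of $\hull(I)$ is precisely the statement that $\hull(\alpha_x(I(s(x)))) = \hull(I(r(x)))$ for every $x \in G$.

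Finally I would invoke the fact that a closed ideal of a $C^*$-algebra is the intersection of the primitive ideals containing it, so two ideals with the same hull coincide. Applying this fibrewise converts $\hull(\alpha_x(I(s(x)))) = \hull(I(r(x)))$ into $\alpha_x(I(s(x))) = I(r(x))$, which is exactly the dynamical-system notion of invariance, giving the equivalence in both directions at once. The main obstacle is the first step, namely pinning down the Rieffel correspondence for $B(x)$ and confirming that it is literally $P \mapsto \alpha_x(P)$; once this identification is in hand, everything else is formal, resting only on the ideal--hull correspondence and the fibration of $\Prim A$ over $\go$.
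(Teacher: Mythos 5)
Your proof is correct and follows essentially the same route as the paper's: identify $\hull(I)\cap\Prim A(u)$ with $\hull(I(u))$, translate $G$-invariance of $\hull(I)$ into the fibrewise equality $\hull(\alpha_x(I(s(x))))=\hull(I(r(x)))$, and conclude using the fact that a closed ideal is the intersection of the primitive ideals containing it. The only difference is that you explicitly verify that the Rieffel correspondence of $B(x)$ is $P\mapsto\alpha_x(P)$, a point the paper's proof uses implicitly; this is a worthwhile detail to include, and your computation of it is correct.
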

\begin{proof}
	Suppose first that $\hull(I)$ is $G$-invariant, and let $x \in G$. Then by definition $\hull(I(r(x))) = \alpha_x \bigl( \hull(I(s(x))) \bigr)$, and since $I(r(x))$ is equal to 
	the intersection of all the primitive ideals containing it, we have
	\[
		I(r(x)) = \bigcap_{P \in \hull(I(s(x)))} \alpha_x(P) = \alpha_x \Biggl( \bigcap_{P \in \hull(I(s(x)))} P \Biggr) 
			= \alpha_x(I(s(x))).
	\]
	On the other hand, suppose $I$ is invariant, and let $P \in \hull(I)$. Identify $P$ with the pair $(s(x), P)$, where $P \in \Prim A(s(x))$. We have 
	$I(s(x)) \subseteq P$, so
	\[
		I(r(x)) = \alpha_x(I(s(x))) \subseteq \alpha_x(P).
	\]
	Thus $(r(x), \alpha_x(P))$ belongs to $\hull(I)$. Hence $\hull(I)$ is $G$-invariant.
\end{proof}

It is shown in \cite{dana-marius} that if $I$ is an invariant ideal, then there are Fell bundles $\B_I$ and $\B^I$ over $G$ with $I = \Gamma_0(\go, \B_I)$ and 
$A/I = \Gamma_0(\go, \B^I)$. Furthermore, there is a short exact sequence of Fell bundle $C^*$-algebras
\[
	0 \to C^*(G, \B_I) \to C^*(G, \B) \to C^*(G, \B^I) \to 0
\]
by \cite[Theorem 3.7]{dana-marius}. The same cannot cannot be said for the reduced $C^*$-algebras, since exactness is known to fail for reduced crossed 
products associated to certain groupoids (or even some groups). Therefore, we will focus on the sequence of reduced Fell bundle $C^*$-algebras
\begin{equation}
\label{eq:ses}
	0 \to C_r^*(G, \B_I) \to C_r^*(G, \B) \to C_r^*(G, \B^I) \to 0,
\end{equation}
and attempt to determine when it is guaranteed to be exact. In light of the stabilization theorem, one might guess that it suffices to require $G$ to be exact.

Before proceeding any further, we first need to make sure that sequences like the one in \eqref{eq:ses} actually make sense. That is, we need to verify that there is a 
natural inclusion $C_r^*(G, \B_I) \hookrightarrow C_r^*(G, \B)$ and a surjection $C_r^*(G, \B) \to C_r^*(G, \B^I)$. In \cite[Lemma 3.5]{dana-marius}, the authors 
show that the inclusion $I \subseteq A$ (and subsequent embedding of $\B_I$ into $\B$) yields an natural inclusion
\[
	\iota : \Gamma_c(G, \B_I) \hookrightarrow \Gamma_c(G, \B)
\]
which extends to an isomorphism of $C^*(G, \B_I)$ onto an ideal of $C^*(G, \B)$. We desire an analogous result for reduced $C^*$-algebras, so we need to 
show that $\iota$ is isometric with respect to the reduced norms on $\Gamma_c(G, \B_I)$ and $\Gamma_c(G, \B)$.

\begin{prop}
\label{prop:ideal}
	The inclusion map $\iota : \Gamma_c(G, \B_I) \hookrightarrow \Gamma_c(G, \B)$ extends to an isomorphism of $C_r^*(G, \B_I)$ onto an ideal of $C_r^*(G, \B)$.
\end{prop}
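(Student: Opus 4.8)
The plan is to reduce everything to the single claim that $\iota$ is \emph{isometric} for the reduced norms; once that is in hand, the ideal property follows from the same algebraic identities used in the full case of \cite[Lemma 3.5]{dana-marius}. To set up the comparison, I would fix a faithful nondegenerate representation $\pi : A \to B(\Hil)$ and use it to compute $\norm{\cdot}_r$ on $\Gamma_c(G, \B)$ via $\Ind \pi$. Writing $\Hil_\ess = \overline{\pi(I)\Hil}$ for the essential subspace of $I$, the restriction $\pi_I := \pi\vert_I$ acting on $\Hil_\ess$ is again faithful and nondegenerate: nondegeneracy is built into $\Hil_\ess$, while faithfulness follows because $\pi(a)\vert_{\Hil_\ess} = 0$ for $a \in I$ forces $\pi(ab) = 0$ for all $b \in I$, hence $aI = 0$ and (taking $b = a^*$) $a = 0$. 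Since the reduced norm is independent of the choice of faithful representation, $\pi_I$ computes $\norm{\cdot}_r$ on $\Gamma_c(G, \B_I)$ via $\Ind \pi_I$, acting on the completion of $\mathsf{X}_I \odot \Hil_\ess$, where $\mathsf{X}_I$ is the Hilbert $I$-module completion of $\Gamma_c(G, \B_I)$.

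The heart of the argument is to realize $\Ind \pi_I$ as a reducing piece of $\Ind \pi$. First I would check that for $\xi, \eta \in \Gamma_c(G, \B_I)$ the $A$-valued inner product $\hipp{\iota(\eta)}{\iota(\xi)}_A(u) = \int_G \eta(x)^* \xi(x) \, d\lambda_u(x)$ already lies in $I$, since each integrand $\eta(x)^* \xi(x)$ is a right $A(s(x))$-valued inner product of elements of $B_I(x)$ and so lands in $I(s(x))$. Consequently $\pi(\hipp{\iota(\eta)}{\iota(\xi)}_A) = \pi_I(\hipp{\eta}{\xi}_I)$, so the map $\xi \otimes h \mapsto \iota(\xi) \otimes h$ (with $\xi \in \Gamma_c(G, \B_I)$, $h \in \Hil_\ess$) preserves inner products and extends to an isometry of the completion of $\mathsf{X}_I \odot \Hil_\ess$ onto a closed subspace $\mathcal{W} \subseteq \mathsf{X} \odot \Hil$. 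A routine check on elementary tensors, using that $\iota$ is multiplicative and $\B_I$ is closed under convolution, shows this isometry intertwines $\Ind \pi_I(f)$ with $\Ind \pi(\iota(f))\vert_{\mathcal{W}}$ for every $f \in \Gamma_c(G, \B_I)$; in particular $\norm{\iota(f)}_r \geq \norm{f}_r$.

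It remains to show that $\mathcal{W}$ carries the \emph{full} norm of $\Ind \pi(\iota(f))$, and this is where invariance of $I$ does the work. Using $B_I(x) B(y) \subseteq B_I(xy)$ (a consequence of invariance), one verifies that $\iota(f) \cdot \zeta \in \Gamma_c(G, \B_I)$ for every $\zeta \in \Gamma_c(G, \B)$, so the range of $\Ind \pi(\iota(f))$ lies in $\mathcal{W}$. Because $f^* \in \Gamma_c(G, \B_I)$ as well, the same applies to $\Ind \pi(\iota(f))^* = \Ind \pi(\iota(f^*))$; hence $\mathcal{W}$ reduces $\Ind \pi(\iota(f))$ and the operator vanishes on $\mathcal{W}^\perp$. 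Therefore $\norm{\iota(f)}_r = \norm{\Ind \pi(\iota(f))} = \norm{\Ind \pi(\iota(f))\vert_{\mathcal{W}}} = \norm{\Ind \pi_I(f)} = \norm{f}_r$, so $\iota$ is isometric and extends to an isomorphism of $C_r^*(G, \B_I)$ onto $\overline{\iota(\Gamma_c(G, \B_I))}$. Finally, the inclusions $\Gamma_c(G, \B) * \Gamma_c(G, \B_I) \subseteq \Gamma_c(G, \B_I)$ and $\Gamma_c(G, \B_I) * \Gamma_c(G, \B) \subseteq \Gamma_c(G, \B_I)$ (again from invariance) pass to the completions and exhibit the image as an ideal. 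I expect the main obstacle to be precisely this isometry: the inner-product identity delivers the easy inequality by restriction to $\mathcal{W}$, but the reverse bound rests on the subtler observation that $\mathcal{W}$ is \emph{reducing}, not merely invariant, which is exactly what forces the norm to be concentrated there.
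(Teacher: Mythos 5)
Your proof is correct and follows essentially the same route as the paper's: both arguments hinge on passing to the essential subspace of $\pi\vert_I$, checking that the $A$-valued inner products of sections of $\B_I$ land in $I$, and identifying the induced representation of $C^*(G,\B_I)$ with the restriction of the ambient induced representation to its essential (hence reducing) subspace, so that the two reduced norms agree. The one substantive difference is that the paper replaces $\pi$ by its subrepresentation $\rho$ on that essential subspace and asserts that $\rho$ is still faithful on all of $A$ --- a claim whose stated justification really needs $aI \neq 0$ for every nonzero $a \in A$ --- whereas you keep the original faithful $\pi$ on the full Hilbert space to compute $\norm{\cdot}_r$ on $\Gamma_c(G,\B)$ and only cut down to $\Hil_{\ess}$ on the $I$-side, which cleanly sidesteps that point.
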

\begin{proof}
	The spirit of the proof is similar to that of \cite[Lemma 6.9]{lalondeexact}. Let $\pi : A \to \B(\Hil_\pi)$ be a faithful representation on a separable Hilbert space $\Hil_\pi$, 
	and form the associated regular representation $\Ind \pi$ of $C^*(G, \B)$. Then for any $f \in \Gamma_c(G, \B_I)$,
	\[
		\norm{\iota(f)}_r = \norm{\Ind \pi(\iota(f))}.
	\]
	We are tempted at this point to say that $(\Ind \pi) \vert_{C^*(G,\B_I)} = \Ind \pi \vert_I$, so that the above norm is just $\norm{f}_r$. However, the representations
	$(\Ind \pi) \vert_{C^*(G,\B_I)}$ and $\pi \vert_I$ might be degenerate, which complicates the matter.
	
	To work around these issues, let $\Hil$ denote the essential subspace of $\pi \vert_I$. Then $\pi \vert_I$ is a faithful, nondegenerate representation of $I$ on $\Hil$. 
	Also, the subspace $\Hil \subseteq \Hil_\pi$ is invariant for all the operators in $\pi(A)$, so we obtain a nondegenerate subrepresentation $\rho$ of $A$ on $\Hil$. 
	Notice that $\rho \vert_I$ is faithful, since $\pi \vert_I = \rho \vert_I \oplus 0$. Moreover, $\rho$ is faithful on $A$: if $a \in A$ and $b \in I$, then
	\[
		\rho(a)\rho(b) = \rho(ab) \neq 0,
	\]
	since $ab \in I$ and $\rho \vert_I$ is faithful. Thus $\rho(a) \neq 0$ and $\rho$ is faithful. Therefore, if we form the induced representation $\Ind \rho$ of $C^*(G, \B)$ on
	$\mathsf{X} = \overline{\Gamma_c(G, \B) \odot \Hil}$, then $\norm{f}_r = \norm{\Ind \rho(f)}$ for all $f \in \Gamma_c(G, \B)$. On the other hand, we can form the induced 
	representation $\Ind \rho \vert_I$ on $\mathsf{X}_I = \overline{\Gamma_c(G, \B_I) \odot \Hil}$, and for all $f \in \Gamma_c(G, \B_I)$,
	\[
		\norm{f}_r = \norm{\Ind \rho \vert_I (f)}.
	\]
	Now observe that $\Gamma_c(G, \B_I) \odot \Hil$ sits naturally inside $\Gamma_c(G, \B) \odot \Hil$, and this embedding is isometric: for all 
	$\xi \in \Gamma_c(G, \B_I)$ and $h \in \Hil$, we have
	\[
		\ip{\xi \otimes h}{\xi \otimes h}_{\mathsf{X}} = \ip{\rho \bigl( \la \xi, \xi \ra_A \bigr) h}{h} 
			= \ip{\pi \vert_I \bigl( \la \xi, \xi \ra_I \bigr) h}{h} 
			= \ip{\xi \otimes h}{\xi \otimes h}_{\mathsf{X}_I}.
	\]
	Thus $\mathsf{X}_I$ embeds isometrically into $\mathsf{X}$. We claim that $\mathsf{X}_I$ is the essential subspace for the possibly degenerate representation
	$(\Ind \rho) \vert_{C^*(G, \B_I)}$. To see this, let $\xi \in \Gamma_c(G, \B)$, $h \in \Hil$, and $f \in \Gamma_c(G, \B_I)$. Then
	\[
		\Ind \rho(f) (\xi \otimes h) = f \cdot \xi \otimes h,
	\]
	where $f \cdot \xi = f * \xi \in \Gamma_c(G, \B_I)$, since $\Gamma_c(G, \B_I)$ is an ideal in $\Gamma_c(G, \B)$. Thus $f \cdot \xi \otimes h \in \Gamma_c(G, \B_I) \odot \Hil$.
	It follows that
	\[
		\overline{\spa} \{ \Ind \rho(f) x : f \in C^*(G, \B_I), x \in \mathsf{X} \} = \mathsf{X}_I,
	\]
	and the left hand side is precisely the essential subspace for $(\Ind \rho) \vert_{C^*(G, \B_I)}$. It is then clear that $(\Ind \rho) \vert_{C^*(G, \B_I)} \circ \iota$ and 
	$\Ind \rho\vert_I$ agree on $\mathsf{X}_I$. It now follows that for all $f \in \Gamma_c(G, \B_I)$,
	\[
		\norm{\iota(f)}_r = \norm{(\Ind \rho) \vert_{C^*(G, \B_I)}(\iota(f))} = \norm{\Ind \rho \vert_I (f)} = \norm{f}_r.
	\]
	Therefore, $\iota$ is isometric for the reduced norms, so it extends to an isomorphism of $C_r^*(G, \B_I)$ onto an ideal of $C_r^*(G, \B)$.
\end{proof}

It is shown in \cite[Lemma 3.6]{dana-marius} that the quotient homomorphism $q_I : A \to A/I$ induces a natural surjective homomorphism 
$q : \Gamma_c(G, \B) \to \Gamma_c(G, \B^I)$ via the fiberwise quotient maps $q_x : B(x) \to B^I(x)$, which extends to a surjection of $C^*(G, \B)$ onto $C^*(G, \B^I)$. 
We will now verify the analogous result for the reduced Fell bundle $C^*$-algebras. Fortunately, the proof does not require the same machinations with degenerate 
representations that were necessary for the previous proposition.

\begin{prop}
	The map $q : \Gamma_c(G, \B) \to \Gamma_c(G, \B^I)$ extends to a surjective homomorphism $q : C_r^*(G, \B) \to C_r^*(G, \B^I)$.
\end{prop}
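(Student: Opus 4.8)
The plan is to show that $q$ is contractive for the reduced norms; surjectivity then follows immediately, since the image of the extension is a $C^*$-subalgebra of $C_r^*(G, \B^I)$ containing the dense subalgebra $\Gamma_c(G, \B^I) = q(\Gamma_c(G, \B))$. In contrast to the previous proposition, we can avoid degenerate representations entirely by inducing from a faithful representation of the \emph{quotient} algebra $A/I$ rather than from a representation of $A$.

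First I would fix a faithful, nondegenerate representation $\bar\pi : A/I \to \B(\Hil)$, so that $\norm{q(f)}_r = \norm{\Ind\bar\pi(q(f))}$ for every $f \in \Gamma_c(G, \B)$. Composing with the quotient map gives a representation $\pi = \bar\pi \circ q_I : A \to \B(\Hil)$, which is again nondegenerate because $q_I$ is surjective and $\bar\pi$ is nondegenerate. Thus $\Ind\pi$ is a bona fide (nondegenerate) regular representation of $C^*(G, \B)$ on the completion of $\mathsf{X} \odot \Hil$.

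The technical heart is the compatibility of the fiberwise quotient maps $q_x : B(x) \to B^I(x)$ with the inner products. Since each $q_x$ is multiplicative and $*$-preserving, one computes fiberwise that
\[
	q_I\bigl( \hipp{f}{g}_A \bigr) = \hipp{q(f)}{q(g)}_{A/I}
\]
for all $f, g \in \Gamma_c(G, \B)$, where the right-hand side is the $A/I$-valued inner product on $\Gamma_c(G, \B^I)$. Using this identity, I would define $U(\xi \otimes h) = q(\xi) \otimes h$ and check that it preserves inner products:
\[
	\ip{U(\xi \otimes h)}{U(\eta \otimes k)} = \ip{\bar\pi\bigl( \hipp{q(\eta)}{q(\xi)}_{A/I} \bigr) h}{k} = \ip{\pi\bigl( \hipp{\eta}{\xi}_A \bigr) h}{k} = \ip{\xi \otimes h}{\eta \otimes k}.
\]
Because $q : \Gamma_c(G, \B) \to \Gamma_c(G, \B^I)$ is onto, $U$ has dense range, so it extends to a unitary $\bar U$ from the space of $\Ind\pi$ onto the space of $\Ind\bar\pi$. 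A short calculation using $q(f * \xi) = q(f) * q(\xi)$ shows that $\bar U$ intertwines $\Ind\pi$ with $\Ind\bar\pi \circ q$, whence $\norm{\Ind\bar\pi(q(f))} = \norm{\Ind\pi(f)}$ for all $f$.

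It remains to compare $\norm{\Ind\pi(f)}$ with the reduced norm $\norm{f}_r$. Since $\pi$ is a representation of $A$, it is weakly contained in any faithful representation $\pi_0$ of $A$, and because induction preserves weak containment we obtain $\norm{\Ind\pi(f)} \le \norm{\Ind\pi_0(f)} = \norm{f}_r$. Combining the displayed relations yields $\norm{q(f)}_r = \norm{\Ind\pi(f)} \le \norm{f}_r$, so $q$ is contractive and extends to the desired surjection $C_r^*(G, \B) \to C_r^*(G, \B^I)$. I expect the main obstacle to be precisely this last weak-containment step, i.e. justifying that the induced representation $\Ind\pi$ coming from the non-faithful coefficient representation $\pi = \bar\pi \circ q_I$ is dominated by the reduced norm; everything else amounts to bookkeeping with the fiberwise quotient maps.
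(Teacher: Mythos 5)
Your proposal is correct and follows essentially the same route as the paper: induce from a faithful representation of $A/I$, pull it back to a nondegenerate representation of $A$ via $q_I$, use the compatibility $q_I(\hipp{f}{g}_A) = \hipp{q(f)}{q(g)}_{A/I}$ to build a unitary intertwining the two induced representations, and conclude that $q$ is norm-decreasing for the reduced norms. The only difference is cosmetic: the paper simply asserts $\norm{\Ind(\pi\circ q_I)(f)} \leq \norm{f}_r$, whereas you justify it via weak containment (equivalently, by comparing with $\Ind(\pi\circ q_I \oplus \pi_0)$ for $\pi_0$ faithful), which is the step you rightly identify as the only one needing care.
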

\begin{proof}
	Since the authors of \cite{dana-marius} already verified that $q : \Gamma_c(G, \B) \to \Gamma_c(G, \B^I)$ is a $*$-homomorphism, it only remains to show 
	that $q$ is bounded with respect to the reduced norms. Let $\pi : A/I \to B(\Hil)$ be a faithful representation, and form the induced representation $\Ind \pi$ of 
	$C_r^*(G, \B^I)$ on $\mathsf{X}^I = \overline{\Gamma_c(G, \B^I) \odot \Hil}$. On the other hand, $\pi \circ q_I$ is a nondegenerate representation of $A$ on 
	$\Hil$, and the associated induced representation $\Ind(\pi \circ q_I)$ of $C_r^*(G, \B)$ acts on $\mathsf{X} = \overline{\Gamma_c(G, \B) \odot \Hil}$. Define 
	$U_0 : \Gamma_c(G, \B) \odot \Hil \to \Gamma_c(G, \B^I) \odot \Hil$ by
	\[
		U_0(\xi \otimes h) = q(\xi) \otimes h.
	\]
	We claim that $U_0$ extends to a unitary $U : \mathsf{X} \to \mathsf{X}^I$. To see this, observe that if $\xi, \eta \in \Gamma_c(G, \B)$ and $h, k \in \Hil$, then
	\begin{align*}
		\ip{U_0(\xi \otimes h)}{U_0(\eta \otimes k)} &= \ip{q(\xi) \otimes h}{q(\eta) \otimes k} \\
			&= \ip{\pi \bigl( \la q(\eta), q(\xi) \ra_{A/I} \bigr) h}{k},
	\end{align*}
	where
	\begin{align*}
		\la q(\eta), q(\xi) \ra_{A/I}(u) &= \int_G q(\eta)(x)^* q(\xi)(x) \, d\lambda_u(x) \\
			&= \int_G q_{x^{-1}}(\eta(x))^* q_x(\xi(x)) \, d\lambda_u(x) \\
			&= \int_G q_{s(x)} \bigl( \eta(x)^* \xi(x) \bigr) \, d\lambda_u(x) \\
			&= q_u \left( \int_G \eta(x)^* \xi(x) \, d\lambda_u(x) \right) \\
			&= q_u \bigl( \la \eta, \xi \ra_A (u) \bigr).
	\end{align*}
	Therefore,
	\[
		\ip{U_0(\xi \otimes h)}{U_0(\eta \otimes k)} = \ip{ (\pi \circ q_I) \bigl( \la \eta, \xi \ra_A \bigr) h}{k} = \ip{\xi \otimes h}{\eta \otimes k},
	\]
	so $U_0$ is isometric. It is clear that $U_0$ has dense range, thus it extends to a unitary $U : \mathsf{X} \to \mathsf{X}^I$. Furthermore, $U$ intertwines  
	$\Ind(\pi \circ q_I)$ and $(\Ind \pi) \circ q$:
	\begin{align*}
		\Ind \pi (q(f)) U(\xi \otimes h) &= q(f) \cdot q(\xi) \otimes h \\
			&= q(f \cdot \xi) \otimes h \\
			&= U(f \cdot \xi \otimes h) \\
			&= U \bigl( \Ind(\pi \circ q_I)(f)(\xi \otimes h) \bigr)
	\end{align*}
	for all $f, \xi \in \Gamma_c(G, \B)$ and $h \in \Hil$. Therefore, for all $f \in \Gamma_c(G, \B)$ we have
	\[
		\norm{q(f)}_r = \norm{\Ind \pi (q(f))} = \norm{\Ind(\pi \circ q_I)(f)} \leq \norm{f}_r.
	\]
	Hence $q$ is norm-decreasing, so it extends to a homomorphism $q : C_r^*(G, \B) \to C_r^*(G, \B^I)$, which is surjective since $q$ has dense range.
\end{proof}

Now we proceed with determining when sequences like the one in \eqref{eq:ses} are exact. Instead of working directly with the dynamical system
afforded by the Stabilization Theorem, the details are a little nicer if we work in a slightly more abstract setting at first. Let $p_\D : \D \to G$ be another Fell 
bundle over $G$, and let $C = \Gamma_0(G, \D)$ denote its unit $C^*$-algebra. Furthermore, suppose $q : \E \to G$ is a $\B-\D$-equivalence over the trivial 
$G-G$-equivalence $G$. It is then straightforward to check that the restriction $\E \vert_{\go}$ is a $\B \vert_{\go} - \D \vert_{\go}$-imprimitivity bimodule bundle 
(as defined in \cite[Definition 2.17]{KMRW} and discussed further in \cite[Definition 6.14]{jonbrown}), so $E = \Gamma_0(\go, \E)$ is an $A-C$-imprimitivity 
bimodule. We let $h : \mathcal{I}(C) \to \mathcal{I}(A)$ denote the associated Rieffel correspondence between the ideal lattices of $C$ and $A$, respectively.

Now suppose $J \subseteq C$ is an invariant ideal, and let $I = h(J)$ be the corresponding ideal in $A$. We intend to prove that the sequence 
\begin{equation}
\label{eq:sesB}
	0 \to C_r^*(G, \B_I) \to C_r^*(G, \B) \to C_r^*(\B^I) \to 0
\end{equation}
is exact if and only if
\begin{equation}
\label{eq:sesD}
		0 \to C_r^*(G, \D_J) \to C_r^*(G, \D) \to C_r^*(G, \D^J) \to 0
\end{equation}
is exact. Of course we first need to know that $I$ is an invariant ideal for this conjecture to even make sense. Before we begin the proof, it will be helpful to introduce 
some additional notation. For each $x \in G$ we let
\[
	h_x^{A} : \I(A(s(x))) \to \I(A(r(x))), \quad h_x^C : \I(C(s(x))) \to \I(C(r(x)))
\]
denote the Rieffel correspondences coming from the imprimitivity bimodules $B(x)$ and $D(x)$, respectively. Also, note that for each $u \in \go$, the Rieffel 
correspondence $h : \I(C) \to \I(A)$ descends to a bijection $h_u : \I(C(u)) \to \I(A(u))$ \`{a} la \cite[Remark 3.26]{TFB1}. It is then straightforward to check that 
we have a commuting diagram:
\[
	\xymatrix{
		\I(A(r(x))) &  \I(C(r(x))) \ar[l]_{h_{r(x)}} \\
		\I(A(s(x))) \ar[u]^{h_x^A} & \I(C(s(x))) \ar[u]_{h_x^C} \ar[l]_{h_{s(x)}}
	}
\]
Indeed, this diagram commutes thanks to \cite[Lemma 6.2]{muhly-williams}, which guarantees that we have natural isomorphisms
\[
	B(x) \otimes_{A(s(x))} E(s(x)) \cong E(x \cdot s(x)) = E(x)
\]
and
\[
	E(r(x)) \otimes_{C(r(x))} D(x) \cong E(r(x) \cdot x) = E(x)
\]
of $A(r(x))-C(s(x))$-imprimitivity bimodules.

\begin{prop}
	Let $J \subseteq C$ be an invariant ideal, and let $I = h(J)$ be the corresponding ideal in $A$. Then $I$ is invariant.
\end{prop}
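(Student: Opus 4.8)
The plan is to translate the invariance of $I$---which is by definition a statement about $G$-invariance of $\hull(I)$ inside $\Prim A$---into a purely fiberwise condition on the Rieffel correspondences $h_x^A$, and then deduce that condition from the invariance of $J$ by chasing the commuting diagram of Rieffel correspondences displayed above.

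First I would record a fiberwise reformulation of invariance. Under the identification $\Prim A = \bigsqcup_{u \in \go} \Prim A(u)$, the hull decomposes as $\hull(I) \cap \Prim A(u) = \hull(I(u))$, and the $G$-action from \eqref{eq:primaction} restricts on each fiber to the homeomorphism $\Prim A(s(x)) \to \Prim A(r(x))$ induced by $B(x)$, which is exactly the restriction to primitive ideals of the lattice isomorphism $h_x^A$. Since the Rieffel correspondence respects the hull-kernel operation, so that $\hull(h_x^A(K))$ is the image of $\hull(K)$ under this homeomorphism for every ideal $K \subseteq A(s(x))$, and since an ideal is recovered as the intersection of the primitive ideals in its hull, the $G$-invariance of $\hull(I)$ is equivalent to the fiberwise condition
\[
	h_x^A\bigl( I(s(x)) \bigr) = I(r(x)) \quad \text{for all } x \in G.
\]
The identical reformulation applies to $J$ with the correspondences $h_x^C$, and since $J$ is invariant by hypothesis, we have $h_x^C\bigl( J(s(x)) \bigr) = J(r(x))$ for every $x$.

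Next I would use that the global Rieffel correspondence $h : \I(C) \to \I(A)$ descends fiberwise to the bijections $h_u$, so that $I = h(J)$ gives $I(u) = h_u\bigl( J(u) \bigr)$ for each $u \in \go$. The conclusion is then a formal chase through the commuting square. For $x \in G$,
\[
	h_x^A\bigl( I(s(x)) \bigr) = h_x^A\bigl( h_{s(x)}(J(s(x))) \bigr) = h_{r(x)}\bigl( h_x^C(J(s(x))) \bigr) = h_{r(x)}\bigl( J(r(x)) \bigr) = I(r(x)),
\]
where the second equality is the commutativity of the diagram, the third is invariance of $J$, and the last is $I(r(x)) = h_{r(x)}(J(r(x)))$. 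By the fiberwise reformulation, this shows that $\hull(I)$ is $G$-invariant, i.e.\ that $I$ is invariant.

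I expect the main obstacle to be the first step: carefully justifying that $G$-invariance of $\hull(I)$ is equivalent to the fiberwise condition $h_x^A(I(s(x))) = I(r(x))$. This rests on two compatibilities---that under the identification of $\Prim A$ the hull splits fiberwise as $\hull(I(u))$, and that the lattice isomorphism $h_x^A$ intertwines the hull operation with the homeomorphism on primitive ideal spaces used to define the $G$-action. Both are standard features of the Rieffel correspondence, but they must be invoked precisely so that the final computation is legitimate; once the fiberwise reformulation is established, the remainder is an immediate consequence of the commuting diagram together with the invariance of $J$.
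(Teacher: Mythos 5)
Your proof is correct, and it rests on exactly the same key ingredient as the paper's: the commuting square relating $h_x^A$, $h_x^C$, $h_{r(x)}$, and $h_{s(x)}$, together with the invariance of $J$. The difference is in how invariance is tested. The paper stays at the level of primitive ideals: it first observes $\hull(I) = h(\hull(J))$, then shows directly that the $G$-action intertwines with $h$, i.e.\ $x \cdot P = h(x \cdot h^{-1}(P))$ for $P \in \Prim A(s(x))$, and concludes that $\hull(I)$ is carried into itself. You instead convert $G$-invariance of $\hull(I)$ into the ideal-level condition $h_x^A(I(s(x))) = I(r(x))$ and chase the ideals themselves through the square. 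Your route requires the extra (standard, and correctly flagged by you as the delicate point) verification that the hull of $I$ splits fiberwise and that $h_x^A$ intertwines hulls with the homeomorphism on $\Prim$; the paper sidesteps this by never leaving $\Prim$. In exchange, your version makes explicit the fiberwise characterization of invariance for general Fell bundles, which is the natural generalization of the unnumbered proposition the paper proves only for dynamical systems, and arguably makes the final computation cleaner. Both arguments are complete; just make sure the reformulation step is justified with the same care you indicate, since for a general Fell bundle the definition of invariance is stated only in terms of $\hull(I)$.
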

\begin{proof}
	We need to show that $\hull(I)$ is a $G$-invariant subset of $\Prim A$. First observe that
	\begin{align*}
		\hull(I) &= \{P \in \Prim A : I \subseteq P \} \\
			&= \{ h(Q) \in \Prim A : I=h(J) \subseteq h(Q) \} \\
			&= h \bigl( \{ Q \in \Prim C : J \subseteq Q \} \bigr) \\
			&= h(\hull(J)).
	\end{align*}
	Now let $x \in G$ and suppose $P \in \hull(I)$ is lifted from the fiber $A(s(x))$, so we can identify $P$ with $(s(x), P)$. As in the proof of Corollary 3.9 
	of \cite{IKSW}, we have
	\[
		x \cdot P = h(x \cdot h^{-1}(P))
	\]
	since
	\begin{align*}
		x \cdot (s(x), P) &= (r(x), h_x^{A}(P)) \\
		&= \bigl( r(x), h_{r(x)}(h_x^C(h_{s(x)}^{-1}(P))) \bigr) \\
		&= h \bigl( r(x), h_x^C(h_{s(x)}^{-1}(P)) \bigr) \\
		&= h(x \cdot h^{-1}(P)).
	\end{align*}
	Since $P \in \hull(I)$, $h^{-1}(P) \in \hull(J)$, so $x \cdot h^{-1}(P) \in \hull(J)$ since $\hull(J)$ is a $G$-invariant subset of $\Prim C$. It is then clear 
	that $x \cdot P \in \hull(I)$, so $I$ is invariant.
\end{proof}

Now we proceed with the proof that \eqref{eq:sesB} is exact if and only if \eqref{eq:sesD} is. The key to the argument is verifying that the ideals $C_r^*(G, \B_I)$ and 
$C_r^*(G, \D_J)$ are paired under the Morita equivalence of $C_r^*(G, \B)$ and $C_r^*(G, \D)$. Indeed, it suffices to show that this Morita equivalence induces 
one between $C_r^*(G, \B_I)$ and $C_r^*(G, \D_J)$, and likewise between $C_r^*(G, \B^I)$ and $C_r^*(G, \D^J)$. The trick is to cut down 
the bundle $q : \E \to G$ that implements the $\B-\D$-equivalence in order to form a $\B_I - \D_J$-equivalence $\Eij \to G$. By taking fiberwise quotients of 
$\E$, we can also form a $\B^I - \D^J$-equivalence $\EIJ \to G$. We begin by defining
\[
	\Eij = \{ e \in \E : \la e, e \ra_\D \in D_J(s(e)) \} 
\]
where we have written $s(e)$ in place of $s(q(e))$. Notice that for each $x \in G$, 
\[
	E_{I,J}(x) = \{ e \in E(x) : \la e, e \ra_\D \in C_J(s(x)) \}.
\]
Since $E(x)$ is an $A(r(x))-C(s(x))$-imprimitivity bimodule, it should follow that the submodule $E_{I,J}(x)$ is an $I(r(x))-J(s(x))$-imprimitivity 
bimodule. Indeed, an argument along the lines of \cite[Lemma 3.1]{dana-marius} shows this to be the case.

\begin{lem}
\label{lem:fibermodules}
	For each $x \in G$, we have
	\[
		I(r(x)) \cdot E(x) = E_{I, J}(x) = E(x) \cdot J(s(x)).
	\]
	In other words, $E_{I,J}(x)$ is the closed submodule of $E(x)$ associated to $I(r(x))$ and $J(s(x))$ under the Rieffel correspondence. Thus $E_{I,J}(x)$ is an
	$I(r(x))-J(s(x))$-imprimitivity bimodule for each $x \in G$.
\end{lem}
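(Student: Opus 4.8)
The plan is to reduce the fiberwise statement to the standard Rieffel correspondence for submodules of an imprimitivity bimodule, exactly as in \cite[Lemma 3.1]{dana-marius}. Fix $x \in G$ and recall that $E(x)$ is an $A(r(x))-C(s(x))$-imprimitivity bimodule. For any ideal $K \subseteq C(s(x))$, the general theory guarantees that $\{ e \in E(x) : \la e, e \ra_\D \in K \}$ is a closed submodule coinciding with $E(x) \cdot K$ and with $L \cdot E(x)$, where $L \subseteq A(r(x))$ is the ideal corresponding to $K$ under the Rieffel correspondence for $E(x)$; moreover this submodule is an $L-K$-imprimitivity bimodule. Taking $K = J(s(x))$, the displayed set is exactly $E_{I,J}(x)$, so everything reduces to showing that the ideal of $A(r(x))$ paired with $J(s(x))$ by the $E(x)$-correspondence is precisely $I(r(x))$.

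First I would identify the Rieffel correspondence $g_x \colon \I(C(s(x))) \to \I(A(r(x)))$ attached to $E(x)$. Using the isomorphism $E(x) \cong E(r(x)) \otimes_{C(r(x))} D(x)$ of $A(r(x))-C(s(x))$-imprimitivity bimodules from \cite[Lemma 6.2]{muhly-williams}, the correspondence for $E(x)$ factors as the composite of those for $D(x)$ and $E(r(x))$, so that $g_x = h_{r(x)} \circ h_x^C$. By the commuting diagram preceding the lemma, this equals $h_x^A \circ h_{s(x)}$.

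It then remains to verify $g_x(J(s(x))) = I(r(x))$. Since $I = h(J)$ descends fiberwise to $I(u) = h_u(J(u))$ à la \cite[Remark 3.26]{TFB1}, we have $h_{s(x)}(J(s(x))) = I(s(x))$, and hence $g_x(J(s(x))) = h_x^A(I(s(x)))$. The equality $h_x^A(I(s(x))) = I(r(x))$ is the heart of the matter: because $h_x^A$ restricts on primitive ideals to the homeomorphism $h_x$ defining the $G$-action on $\Prim A$, and every ideal is the intersection of the primitive ideals containing it, the hull-invariance of $I$ (established in the preceding proposition) yields $h_x^A(I(s(x))) = I(r(x))$ by the same intersection argument used there for dynamical systems. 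Feeding $L = I(r(x))$ and $K = J(s(x))$ into the general result then gives $E_{I,J}(x) = I(r(x)) \cdot E(x) = E(x) \cdot J(s(x))$ together with the imprimitivity bimodule structure. I expect the main obstacle to be precisely this bookkeeping: correctly identifying $g_x$ through the two tensor decompositions and translating invariance of $I$ into the fiberwise equality $h_x^A(I(s(x))) = I(r(x))$; once those are in place, the standard submodule correspondence does the rest.
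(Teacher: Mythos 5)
Your proposal is correct and takes essentially the same approach as the paper: both reduce the lemma to showing that the Rieffel correspondence of $E(x)$ pairs $J(s(x))$ with $I(r(x))$, using the submodule characterization $E_{I,J}(x) = E(x)\cdot J(s(x))$, the commuting square $h_{r(x)}\circ h_x^C = h_x^A\circ h_{s(x)}$ coming from the tensor decompositions of $E(x)$, and hull-invariance of the ideals. The only (cosmetic) difference is that the paper traverses the square via $h_{r(x)}\circ h_x^C$ and compares hulls of $J(s(x))$ and $I(r(x))$ directly, whereas you go via $h_x^A\circ h_{s(x)}$ and isolate the step $h_x^A(I(s(x)))=I(r(x))$ by the intersection-of-primitive-ideals argument.
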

\begin{proof}
	By definition (and Lemma 3.23 of \cite{TFB1}), we have $E_{I,J}(x) = E(x) \cdot J(s(x))$. Let $h_x^E = h_{r(x)} \circ h_x^C$ denote the Rieffel correspondence induced by 
	$E(x)$. It suffices to show that $I(r(x)) = h_x^E(J(s(x)))$. If $P \in \Prim A(r(x))$, then we have $I(r(x)) \subseteq P$ if and only if $I \subseteq (r(x), P)$, which in turn holds 
	if and only if $J \subseteq h^{-1}(r(x), P)$. However,
	\[
		h^{-1}(r(x), P) = (s(x), (h_x^C)^{-1}(h_{r(x)}^{-1}(P))) = (s(x), (h_x^E)^{-1}(P)),
	\]
	so $J \subseteq h^{-1}(r(x),P)$ if and only if $J(s(x)) \subseteq (h_x^E)^{-1}(P)$. It then follows that $\hull(I(r(x)) = h_x^E(\hull(J(s(x))))$, so $I(r(x)) = h_x^E(J(s(x)))$.
\end{proof}

Lemma \ref{lem:fibermodules} shows that the fibers of $\Eij$ are imprimitivity bimodules between the appropriate fibers of $\B_I$ and $\D_J$. With this result in
hand, we can proceed with the verification that $\Eij$ is a $\B_I-\D_J$-equivalence.

\begin{prop}
	The bundle $\qij : \Eij \to G$ is an upper semicontinuous Banach bundle, which is furthermore a $\B_I-\D_J$-equivalence. Consequently, $C^*(G,\B_I)$ and $C^*(G, \D_J)$ 
	are Morita equivalent, as are $C_r^*(G, \B_I)$ and $C_r^*(G, \D_J)$.
\end{prop}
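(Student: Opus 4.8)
The plan is to establish the statement in two stages: first showing $\qij : \Eij \to G$ is an upper semicontinuous Banach bundle, then verifying it satisfies the axioms of a $\B_I-\D_J$-equivalence, from which the Morita equivalences follow via the equivalence theorems of \cite{muhly-williams} and \cite{sims-williams2013}. For the bundle structure, I would follow \cite[Lemma 3.1]{dana-marius}: since $\Eij$ is defined as a subset of $\E$ cut out by the condition $\la e, e \ra_\D \in D_J(s(e))$, I would show it is a closed subset of $\E$ and that the restriction of $\qij$ inherits the upper semicontinuous Banach bundle structure. The key point is that by Lemma \ref{lem:fibermodules} the fibers $E_{I,J}(x)$ are exactly the closed submodules associated to the invariant ideals, so one expects $\Eij$ to be closed; continuity of the inner product $(e,f) \mapsto \la e, f \ra_\D$ and of the quotient map $C \to C/J$ should give that the defining condition is preserved under limits.

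Once the bundle structure is in place, I would verify the $\B_I-\D_J$-equivalence axioms (a)--(c) from the definition in the preliminaries. The left $\B_I$-action and right $\D_J$-action on $\Eij$ should simply be the restrictions of the $\B$- and $\D$-actions on $\E$; here I would use Lemma \ref{lem:fibermodules}, which asserts $I(r(x)) \cdot E(x) = E_{I,J}(x) = E(x) \cdot J(s(x))$, to confirm these restricted actions genuinely land in $\Eij$ and are nondegenerate. The inner products ${_\B \la} \cdot, \cdot \ra$ and $\la \cdot, \cdot \ra_\D$ restrict to $\B_I$- and $\D_J$-valued forms on $\Eij$, again by the defining condition together with the characterization of $\B_I$ and $\D_J$ as the subbundles corresponding to $I$ and $J$. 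The imprimitivity bimodule structure on each fiber $E_{I,J}(x)$ is precisely the content of Lemma \ref{lem:fibermodules}, so axiom (c) is already in hand fiberwise, and the algebraic compatibility conditions (b)(i)--(iv) hold for $\Eij$ because they hold for $\E$ and the operations are all restrictions.

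The main obstacle I anticipate is the topological verification that $\qij : \Eij \to G$ is an upper semicontinuous Banach bundle, rather than the algebraic axiom-checking. The subtlety is that while each fiber is well-understood by Lemma \ref{lem:fibermodules}, one must ensure there are enough continuous sections and that the norm is upper semicontinuous on $\Eij$; the natural approach is to exhibit $\Gamma_0(G, \Eij)$ as a suitable submodule of $\Gamma_0(G, \E)$ and invoke the standard criterion (e.g. via \cite[Appendix C]{TFB1}) that a closed subbundle of an upper semicontinuous Banach bundle is again one. I would lean on the analogue established in \cite{dana-marius} for the subbundle $\B_I \subseteq \B$, adapting their argument to the equivalence bundle $\E$. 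With both stages complete, the conclusion that $C^*(G, \B_I) \sim_M C^*(G, \D_J)$ and $C_r^*(G, \B_I) \sim_M C_r^*(G, \D_J)$ is immediate from \cite[Theorem 6.4]{muhly-williams} and \cite[Theorem 14]{sims-williams2013}, together with Proposition \ref{prop:bimoduleiso} for the reduced case.
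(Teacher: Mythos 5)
Your overall architecture (subspace topology on $\Eij$, restrict the actions and inner products, use Lemma \ref{lem:fibermodules} for the fiberwise imprimitivity structure, then invoke the full and reduced equivalence theorems) matches the paper, and your algebraic verifications are essentially the ones the paper carries out. But the topological step --- which you correctly identify as the crux --- is handled by an argument that does not work. The set $\Eij$ is \emph{not} in general a closed subset of $\E$, and there is no ``closed subbundle of an upper semicontinuous Banach bundle is again one'' criterion to invoke. For a concrete failure of your limit argument, take $G = \go = [0,1]$ with $\B = \D = \E$ the trivial line bundle and $J = C_0((0,1])$: then $E_{I,J}(x) = \C$ for $x>0$ and $E_{I,J}(0) = \{0\}$, so the net $(1/n, 1) \in \Eij$ converges in $\E$ to $(0,1) \notin \Eij$. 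Your proposed mechanism --- push the condition $\la e_i, e_i \ra_\D \in D_J(s(e_i))$ through the quotient bundle $\D \to \D^J$ and pass to the limit --- breaks precisely because quotient bundles of upper semicontinuous Banach bundles are typically non-Hausdorff: in the example above the net of zero elements $0_{1/n}$ converges in $\D^J$ to the nonzero element $1_0$, so ``the image is $0$ for all $i$'' does not persist in the limit.

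What is actually needed, and what the paper proves, splits into two parts. Upper semicontinuity of the norm on $\Eij$ is free, since $\{e \in \Eij : \norm{e} < r\}$ is the trace of the corresponding open set of $\E$. The genuine content is the \emph{openness of the restricted projection} $\qij$, which closedness would not give you in any case (the restriction of an open map to a subspace need not be open). This is proved via the factorization from Lemma \ref{lem:fibermodules}: given $e \in E_{I,J}(x) = E(x)\cdot J(s(x))$, write $e = f \cdot a(s(x))$ with $f \in E(x)$ and $a \in J = \Gamma_0(\go, \D_J)$ a \emph{global continuous section}; given $x_i \to x$, use openness of $q : \E \to G$ to lift to $f_i \to f$ with $q(f_i) = x_i$, and then $f_i \cdot a(s(x_i)) \to e$ with $f_i \cdot a(s(x_i)) \in \Eij$, which verifies the net criterion for openness (\cite[Lemma 1.15]{TFB2}, as in \cite[Proposition 3.3]{dana-marius}). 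Once you replace your closedness argument with this one, the rest of your outline goes through as written.
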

\begin{proof}
	We equip the total space $\Eij$ with the topology inherited from $\E$. It is then necessary to check that the resulting bundle is upper semicontinuous, and that the 
	restriction of $q$ to $\Eij$ is an open map. Upper semicontinuity is fairly easy to verify. We just need to show that the set
	\[
		U_r = \{ e \in \Eij : \norm{e} < r \}
	\]
	is open for all $r \in \R$. Well,
	\[
		U_r = \{ e \in \E : \norm{e} < r \} \cap \Eij,
	\]
	and $\{ e \in \E : \norm{e} < r \}$ in $\E$ since $\E$ is an upper semicontinuous Banach bundle. Thus $U_r$ is open for all $r \in \R$, so $e \to \norm{e}$ 
	is upper semicontinuous from $\Eij$ to $\R$.
	
	The openness of $\qij$ is a little harder to verify, though the proof is very similar to that of \cite[Proposition 3.3]{dana-marius}. The argument relies upon 
	Lemma 1.15 of \cite{TFB2}. Let $e \in \Eij$ and put $x = \qij(e)$. Suppose $x_i \to x$ in $G$. Since $E_{I,J}(x) = E(x) \cdot J(s(x))$, we can write 
	$e = f \cdot a(s(x))$ for some $f \in E(x)$ and $a \in J$. Since the bundle map $q : \E \to G$ is open, we can pass to a subnet, relabel, and find elements 
	$f_i \in \E$ with $q(f_i) = x_i$ and $f_i \to f$. Since $a \in J = \Gamma_0(\go, \D_J)$ is continuous, $a(s(x_i)) \to a(s(x))$. Thus
	\[
		f_i \cdot a(s(x_i)) \to f \cdot a(s(x)) = e,
	\]
	since the action of $\D$ on $\E$ is continuous. Since $f_i \cdot a(s(x_i)) \in \Eij$ for all $i$, it follows that the restriction of $q$ to $\Eij$ is open. Therefore, 
	$\qij : \Eij \to G$ is an upper semicontinuous Banach bundle.

	Now we show that $\qij : \Eij \to G$ is a $\B_I - \D_J$-equivalence. Clearly the bundle $\qij : \Eij \to G$ should inherit natural actions and inner products from 
	$\E$, provided the restrictions of the maps defining those operations take values in the correct places. In particular, we first need to check that the actions 
	of $\B$ and $\D$ on $\E$ restrict to actions of $\B_I$ and $\D_J$, respectively, on $\Eij$. Let $e \in \Eij$ and $a \in \D_J$ with $s(e) = r(a)$, and $x = q(e)$ 
	and $y = p(a)$. Since $a \in D_I(y) = D(y) \cdot J(s(y))$, we can write $a = a' \cdot b$ for some $a' \in D(y)$ and $b \in J(s(x))$. Then
	\[
		e \cdot a'b = (e \cdot a') \cdot b \in E(xy) \cdot J(s(y)) = E(xy) \cdot J(s(xy)) \subseteq \Eij.
	\]
	Since we also know that $E_{I,J}(x) = I(r(x)) \cdot E(x)$ for all $x \in G$, the proof for the $\B_I$-action is similar.
	
	Now we check that the $\B$- and $\D$-valued sesquilinear forms on $\E$ restrict to forms ${_{\B_I} \la} \cdot, \cdot \ra : \Eij *_s \Eij \to \B_I$ and 
	$\la \cdot, \cdot \ra_{\D_J} : \Eij *_r \Eij \to \D_J$. Let $(e, f) \in \Eij *_r \Eij$, and write $f = f' \cdot a$ for some $f' \in E(q(f))$ and $a \in J(s(f))$. Then
	\[
		\la e, f \ra_{\D_J} = \la e, f \ra_\D = \la e, f' \cdot a \ra_\D = \la e, f' \ra_\D \cdot a \in J(s(f)) \subseteq \D_J.
	\]
	Again, the proof for the $\B_I$-valued form is similar. Also, it is clear that all the required axioms for the actions and inner products hold, since they hold in $\E$.
	
	Finally, we know from the previous lemma that $\Eij(x)$ is an $I(r(x))-J(s(x))$-imprimitivity bimodule for each $x \in G$. Therefore, all the axioms of 
	\cite[Definition 6.1]{muhly-williams} are satisfied, and $\qij : \Eij \to G$ is a $\B_I - \D_J$-equivalence.
\end{proof}

Now let $\mathsf{X}$ and $\mathsf{X}_r$ denote the $C^*(G,\B)-C^*(G,\D)$- and $C_r^*(G, \B)-C_r^*(G,\D)$-imprimitivity bimodules arising from $\E$. Likewise, 
we write $\mathsf{X}_J$ and $\mathsf{X}_{J,r}$ for the $C^*(G,\B_I)-C^*(G,\D_J)$- and $C_r^*(G, \B_I)-C_r^*(G,\D_J)$-imprimitivity bimodules afforded by $\Eij$.
If we refer back to the equivalence results of \cite{muhly-williams} and \cite{sims-williams2013}, we see that $\mathsf{X}_J$ and $\mathsf{X}_{J, r}$ both arise as 
completions of $\Gamma_c(G, \Eij)$ with respect to the norms induced from the full and reduced norms, respectively, on $\Gamma_c(G, \B_I)$ (or equivalently,
$\Gamma_c(G, \D_J)$). Observe also that $\Gamma_c(G, \Eij)$ embeds naturally into $\Gamma_c(G, \E)$, and it is not hard to see that this embedding is 
isometric for both norms. It follows that we have inclusions $\mathsf{X}_J \hookrightarrow \mathsf{X}$ and $\mathsf{X}_{J,r} \hookrightarrow \mathsf{X}_r$ of 
imprimitivity bimodules. Furthermore, it is easy to check that
\[
	C^*(G, \B_I) \cdot \mathsf{X} = \mathsf{X}_J = \mathsf{X} \cdot C^*(G, \D_J)
\]
and
\[
	C_r^*(G, \B_I) \cdot \mathsf{X}_r = \mathsf{X}_{J, r} = \mathsf{X}_r \cdot C_r^*(G, \D_J).
\]
Therefore, we have proven the following proposition.

\begin{prop}
\label{prop:rieffel}
	The ideals $C^*(G, \B_I) \subseteq C^*(G, \B)$ and $C^*(G, \D_J) \subseteq C^*(G, \D)$ are paired under the Rieffel correspondence induced from the 
	$C^*(G, \B)-C^*(G, \D)$-imprimitivity bimodule $\mathsf{X}$. The analogous statement holds for the reduced Fell bundle $C^*$-algebras.
\end{prop}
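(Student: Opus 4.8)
The plan is to read the pairing off the Rieffel correspondence directly. Recall that for a $P-Q$-imprimitivity bimodule $\mathsf{Y}$, an ideal $K \subseteq P$ is paired with the ideal $L \subseteq Q$ exactly when $\overline{K \cdot \mathsf{Y}} = \overline{\mathsf{Y} \cdot L}$; in that case the common closed submodule $\mathsf{Y}_0$ recovers both ideals via $K = \overline{{}_{P}\langle \mathsf{Y}_0, \mathsf{Y}_0 \rangle}$ and $L = \overline{\langle \mathsf{Y}_0, \mathsf{Y}_0 \rangle_{Q}}$. So it suffices to prove $\overline{C^*(G, \B_I) \cdot \mathsf{X}} = \mathsf{X}_J = \overline{\mathsf{X} \cdot C^*(G, \D_J)}$, identifying the submodule cut out by the ideal $C^*(G, \B_I)$ with the completion $\mathsf{X}_J$ of $\Gamma_c(G, \Eij)$ that already sits isometrically inside $\mathsf{X}$.

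The heart of the matter is the fiberwise inclusion $C^*(G, \B_I) \cdot \mathsf{X} \subseteq \mathsf{X}_J$, which I would verify on the dense subalgebras. Given $f \in \Gamma_c(G, \B_I)$ and $\xi \in \Gamma_c(G, \E)$, I would expand $(f \cdot \xi)(z)$ using the left-action formula of \cite{muhly-williams} for the trivial $G-G$-equivalence $G$. There the integration variable $x$ runs over $G^{r(z)}$, so $r(x) = r(z)$ throughout. Since $f(x) \in B_I(x) = I(r(x)) \cdot B(x)$ by the fiber description of $\B_I$ (cf. \cite[Lemma 3.1]{dana-marius}) and $B(x) \cdot E(x^{-1} \cdot z) \subseteq E(z)$, every integrand value lies in $I(r(z)) \cdot E(z) = \Eij(z)$ by Lemma \ref{lem:fibermodules}. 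As $\Eij(z)$ is closed, the integral stays there, so $f \cdot \xi \in \Gamma_c(G, \Eij)$, whence $\overline{C^*(G, \B_I) \cdot \mathsf{X}} \subseteq \mathsf{X}_J$. A symmetric computation, using the companion identity $\Eij(z) = E(z) \cdot J(s(z))$, gives $\overline{\mathsf{X} \cdot C^*(G, \D_J)} \subseteq \mathsf{X}_J$.

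For the reverse inclusions I would appeal to nondegeneracy of the imprimitivity-bimodule actions. Since $\qij : \Eij \to G$ is a $\B_I - \D_J$-equivalence, $\mathsf{X}_J$ is a bona fide $C^*(G, \B_I) - C^*(G, \D_J)$-imprimitivity bimodule, so $C^*(G, \B_I) \cdot \mathsf{X}_J$ is dense in $\mathsf{X}_J$. Because $\mathsf{X}_J \subseteq \mathsf{X}$ isometrically, this yields $\mathsf{X}_J \subseteq \overline{C^*(G, \B_I) \cdot \mathsf{X}}$, and combined with the previous paragraph $\overline{C^*(G, \B_I) \cdot \mathsf{X}} = \mathsf{X}_J$; the same argument on the right gives $\overline{\mathsf{X} \cdot C^*(G, \D_J)} = \mathsf{X}_J$, which is precisely the required pairing. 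The reduced statement is proved verbatim after replacing $\mathsf{X}, \mathsf{X}_J$ by $\mathsf{X}_r, \mathsf{X}_{J,r}$ — honest imprimitivity bimodules obtained by completing the same $\Gamma_c$-sections in the reduced norms, by Proposition \ref{prop:bimoduleiso} — since the pointwise computation above is insensitive to the choice of norm.

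The step I expect to require the most care is this fiberwise bookkeeping: one must track which base point ($r(z)$ or $s(z)$) governs the ideal appearing in each integrand, and confirm that the pointwise products genuinely land in the closed fiber $\Eij(z)$, so that $f \cdot \xi$ is a continuous, compactly supported section of $\Eij$ rather than merely of $\E$. Once the identity $\Eij(z) = I(r(z)) \cdot E(z) = E(z) \cdot J(s(z))$ of Lemma \ref{lem:fibermodules} is in hand and nondegeneracy of the action is invoked, the remainder is formal.
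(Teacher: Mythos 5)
Your proposal is correct and follows essentially the same route as the paper: the paper also identifies $\mathsf{X}_J$ (the completion of $\Gamma_c(G,\Eij)$, embedded isometrically in $\mathsf{X}$) with $C^*(G,\B_I)\cdot\mathsf{X} = \mathsf{X}\cdot C^*(G,\D_J)$ and reads off the pairing from the Rieffel correspondence, treating the reduced case identically. Your fiberwise verification via Lemma \ref{lem:fibermodules} and the nondegeneracy argument simply fill in the step the paper labels ``easy to check.''
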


Now we turn our attention to the quotient Fell bundles $\B^I$ and $\D^J$. As in \cite{dana-marius}, for each $x \in G$ we define
\[
	E^{I,J}(x) = E(x)/E_{I,J}(x).
\]
Then $E^{I,J}(x)$ is automatically a $B^I(r(x))-D^J(s(x))$-imprimitivity bimodule by \cite[Proposition 3.25]{TFB1}. Now define 
\[
	\EIJ = \bigsqcup_{x \in G} E^{I,J}(x),
\]
and let $\qIJ : \EIJ \to G$ be the natural projection map. Our goal is to turn $\EIJ$ into a $\B^I-\D^J$-equivalence. We first need to equip $\EIJ$ with a topology that makes it into
an upper semicontinuous Banach bundle over $G$. We will follow the lead of \cite[Proposition 3.4]{dana-marius} and specify a collection of sections of $\EIJ$, and then use
them to generate a topology.

For each $x \in G$, let $\sigma_x : E(x) \to E^{I,J}(x)$ denote the quotient map. We can then define a bundle map $\hat{\sigma} : \E \to \EIJ$ by
\[
	\hat{\sigma}(e) = \sigma_{q(e)}(e).
\]
Given $f \in \Gamma_c(G, \E)$, define a section $\sigma(f) : G \to \EIJ$ by
\[
	\sigma(f)(x) = \hat{\sigma}(f(x)).
\]

\begin{prop}
\label{prop:ILT}
	The total space $\EIJ$ can be endowed with a topology making it into an upper semicontinuous Banach bundle such that
	\[
		\Gamma = \{ \sigma(f) : f \in \Gamma_c(G, \E) \}
	\]
	is dense in $\Gamma_c(G, \EIJ)$ with respect to the inductive limit topology.
\end{prop}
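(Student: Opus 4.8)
The plan is to apply the standard criterion for building an upper semicontinuous Banach bundle from a prescribed space of sections, exactly as in the proof of \cite[Proposition 3.4]{dana-marius}. To do so I must check three things about the candidate family $\Gamma$: that it is a complex vector space closed under pointwise multiplication by $C_c(G)$, that $\{\gamma(x) : \gamma \in \Gamma\}$ is dense in $E^{I,J}(x)$ for each $x \in G$, and that $x \mapsto \norm{\sigma(f)(x)}$ is upper semicontinuous for every $f \in \Gamma_c(G, \E)$. The first two are routine: since $\hat{\sigma}$ is fiberwise linear and $\sigma(\phi f) = \phi \, \sigma(f)$ for $\phi \in C_c(G)$, the family $\Gamma$ is a $C_c(G)$-submodule of the compactly supported sections of $\EIJ$; and since each $\sigma_x : E(x) \to E^{I,J}(x)$ is surjective while the values $\{f(x) : f \in \Gamma_c(G, \E)\}$ are dense in $E(x)$ (because $\E$, being an upper semicontinuous Banach bundle over a locally compact Hausdorff space, has enough sections), the images $\{\sigma(f)(x) : f \in \Gamma_c(G, \E)\}$ are dense in $E^{I,J}(x)$.

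The crux is the upper semicontinuity of the fiber norm, and this is where the imprimitivity bimodule structure does the work. By \cite[Proposition 3.25]{TFB1} the quotient $E^{I,J}(x) = E(x)/E_{I,J}(x)$ is a $B^I(r(x)) - D^J(s(x))$-imprimitivity bimodule whose $D^J(s(x))$-valued inner product is the quotient of the one on $E(x)$; that is, $\la \sigma_x(e), \sigma_x(e) \ra_{\D^J} = q_{s(x)} \bigl( \la e, e \ra_\D \bigr)$, where $q_{s(x)} : D(s(x)) \to D^J(s(x))$ is the fiberwise quotient map. Since the norm on an imprimitivity bimodule is recovered from its inner product, this gives the identity
\[
	\norm{\sigma(f)(x)}^2 = \norm{q_{s(x)} \bigl( \la f(x), f(x) \ra_\D \bigr)}
\]
for $f \in \Gamma_c(G, \E)$. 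Now $x \mapsto \la f(x), f(x) \ra_\D$ is a continuous $\D$-valued map on $G$ (the inner product $\E *_r \E \to \D$ is continuous), and composing it with the fiberwise quotient map $\D \to \D^J$ — which is continuous precisely because \cite{dana-marius} has already shown $\D^J$ to be an upper semicontinuous Banach bundle — yields a continuous map into $\D^J$. The norm on $\D^J$ is upper semicontinuous, so the right-hand side above is upper semicontinuous in $x$, and therefore so is $x \mapsto \norm{\sigma(f)(x)}$.

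With these three conditions verified, the construction theorem produces a unique topology on $\EIJ$ for which $\qIJ$ is continuous and open, the fiber operations are continuous, $e \mapsto \norm{e}$ is upper semicontinuous, and every $\sigma(f)$ is a continuous section; this is the desired upper semicontinuous Banach bundle structure. Density of $\Gamma$ in $\Gamma_c(G, \EIJ)$ for the inductive limit topology is then immediate from the same construction: $\Gamma$ is a $C_c(G)$-submodule of $\Gamma_c(G, \EIJ)$ whose values span a dense subset of each fiber, so a routine Stone--Weierstrass-type argument (again as in \cite[Proposition 3.4]{dana-marius}) shows it is dense. I expect the upper semicontinuity of the fiber norm to be the only real obstacle; the reduction to the already-constructed quotient bundle $\D^J$ via the inner-product identity is what makes it manageable, and it is worth double-checking that the continuity of the fiberwise quotient map on $\D$ transfers correctly through the $\D$-valued inner product on $\E$.
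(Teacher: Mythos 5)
Your proposal is correct and follows essentially the same route as the paper: both verify the Hofmann--Fell criteria, with fiberwise density coming from surjectivity of $\sigma_x$ together with $\E$ having enough sections, and with upper semicontinuity of the norm obtained by writing $\norm{\sigma(f)(x)}^2$ via an imprimitivity-bimodule inner product and pushing it through the continuous fiberwise quotient map of $C^*$-bundles over $\go$ (the paper uses the left, $\A/\I$-valued inner product where you use the right, $\D^J$-valued one, which is immaterial by symmetry). The only places you are terser than the paper are the continuity of the quotient bundle map, which should be justified by the fact that a $C_0(\go)$-linear surjection of section algebras induces a continuous bundle map rather than merely by $\D^J$ being an upper semicontinuous bundle, and the upgrade from uniform density of $\Gamma$ in $\Gamma_0(G, \EIJ)$ to density in $\Gamma_c(G, \EIJ)$ for the inductive limit topology, which requires the short bump-function argument with $\varphi \in C_c(G)^+$ identically $1$ on the support.
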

\begin{proof}
	The proof is nearly identical to that of \cite[Proposition 3.4]{dana-marius}. In light of the Hofmann-Fell theorem \cite[Theorem 1.2]{dana-marius}, it suffices 
	to prove the following:
	\begin{enumerate}
		\item For all $f \in \Gamma$, $x \mapsto \norm{f(x)}$ is upper semicontinuous from $G$ to $\R$.
		\item For all $x \in G$, $\{f(x) : f \in \Gamma\}$ is dense in $E^{I,J}(x)$.
	\end{enumerate}
	The second assertion follows immediately from the fact that $\E$ has enough sections and $\sigma_x : E(x) \to E^{I,J}(x)$ is surjective. Therefore, we will 
	focus on proving (1).
	
	Notice first that we can write
	\[
		\norm{\sigma(f)(x)}^2 = \norm{ {_{(A/I)(r(x))} \la} \sigma(f)(x), \sigma(f)(x) \ra} = \norm{\rho_{r(x)} \bigl( {_{A(r(x))} \la } f(x), f(x) \ra \bigr)},
	\]
	where $\rho_{r(x)} : A(r(x)) \to (A/I)(r(x)) \cong A(r(x))/I(r(x))$ is the quotient map. (The latter equality holds due to Proposition 3.25 of \cite{TFB1}.) If we let $\rho : A \to A/I$ 
	denote the quotient map, then $\rho$ is easily seen to be $C_0(\go)$-linear. Thus $\rho$ induces a continuous $C^*$-bundle homomorphism $\hat{\rho} : \A \to \A/\I$ 
	\cite[Proposition 3.4.16]{lalondethesis} whose restriction to $A(r(x))$ is $\rho_{r(x)}$. Thus the map
	\begin{equation}
	\label{eq:norm}
		x \mapsto \norm{\sigma(f)(x)}^2
	\end{equation}
	is the composition of the continuous map $G \to \A$ defined by
	\[
		x \mapsto {_{A(r(x))} \la} f(x), f(x) \ra
	\]
	with the continuous map $\A \to \A/\I$ given by $a \mapsto \hat{\rho}(a)$ and the upper semicontinuous map $a \mapsto \norm{a}$ from $\A/\I$ to $\R$. 
	Thus \eqref{eq:norm} is upper semicontinuous from $G$ to $\R$. It follows from \cite[Theorem 1.2]{dana-marius} that we can equip $\EIJ$ with a unique 
	topology making it into an upper semicontinuous Banach bundle with $\Gamma \subseteq \Gamma_0(G, \EIJ)$.
	
	Since $\Gamma$ is a $C_0(G)$-module, \cite[Lemma A.4]{muhly-williams} implies that $\Gamma$ is dense in $\Gamma_0(G, \EIJ)$. We claim that 
	$\Gamma$ is actually dense in $\Gamma_c(G, \EIJ)$ with respect to the inductive limit topology. To see why, let $g \in \Gamma_c(G, \EIJ) \subseteq 
	\Gamma_0(G, \EIJ)$ and find a net $f_i \in \Gamma_c(G, \E)$ such that $\sigma(f_i) \to g$ uniformly. Let $K = \supp(g)$, and choose $\varphi \in C_c(G)^+$ 
	such that $\varphi \vert_K \equiv 1$ and $\varphi(x) < 1$ for all $x \not\in K$. Put $g_i = \varphi \cdot \sigma(f_i) = \sigma(\varphi \cdot f_i)$. Notice that 
	$\varphi \cdot g = g$, so we have $g_i \to g$ uniformly. Moreover, $\supp(g_i) \subseteq \supp(\varphi)$ for all $i$, so $g_i \to g$ in the inductive limit 
	topology. Since $g_i \in \Gamma$ for all $i$, it follows that $\Gamma$ is dense in $\Gamma_c(G, \EIJ)$ with respect to the inductive limit topology.
\end{proof}

Before we can finishing proving that $\qIJ : \EIJ \to G$ is a $\B^I-\D^J$-equivalence, we need a quick lemma.

\begin{lem}
\label{lem:sigmacts}
	The bundle map $\hat{\sigma} : \E \to \EIJ$ is continuous.
\end{lem}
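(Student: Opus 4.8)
The plan is to verify continuity of $\hat{\sigma}$ by testing it on convergent nets, exploiting two facts: each fiberwise quotient map $\sigma_x : E(x) \to E^{I,J}(x)$ is a contraction, and the topology on $\EIJ$ was manufactured from the sections $\Gamma = \{\sigma(f) : f \in \Gamma_c(G, \E)\}$ via the Hofmann--Fell theorem \cite[Theorem 1.2]{dana-marius}. The latter furnishes a convergence criterion: a net $\eta_i \to \eta$ in $\EIJ$ precisely when $\qIJ(\eta_i) \to \qIJ(\eta)$ and, for every $\epsilon > 0$, some section in $\Gamma$ approximates $\eta$ to within $\epsilon$ while eventually approximating the $\eta_i$ to within $\epsilon$. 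Since the values $\{\sigma(f)(x)\}$ are dense in each fiber $E^{I,J}(x)$ (as noted in the proof of Proposition \ref{prop:ILT}), $\Gamma$ is an admissible family for this criterion, so it suffices to produce such approximating sections.

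So suppose $e_i \to e$ in $\E$, and put $x_i = q(e_i)$ and $x = q(e)$; continuity of $q$ gives $x_i \to x$, whence $\qIJ(\hat{\sigma}(e_i)) = x_i \to x = \qIJ(\hat{\sigma}(e))$. Fix $\epsilon > 0$. Because $\E$ has enough sections, choose $f \in \Gamma_c(G, \E)$ with $\norm{e - f(x)} < \epsilon$. As $f$ is continuous and $x_i \to x$, we have $f(x_i) \to f(x)$, so $(e_i, f(x_i)) \to (e, f(x))$ with both entries in a common fiber; continuity of subtraction on the Banach bundle $\E$ then gives $e_i - f(x_i) \to e - f(x)$, and upper semicontinuity of the norm yields
\[
	\limsup_i \norm{e_i - f(x_i)} \le \norm{e - f(x)} < \epsilon.
\]
Recalling that $\hat{\sigma}(e) = \sigma_x(e)$ and $\sigma(f)(x) = \sigma_x(f(x))$, and that each $\sigma_x$ is norm-decreasing, we obtain
\[
	\norm{\hat{\sigma}(e) - \sigma(f)(x)} = \norm{\sigma_x(e - f(x))} \le \norm{e - f(x)} < \epsilon
\]
together with $\norm{\hat{\sigma}(e_i) - \sigma(f)(x_i)} \le \norm{e_i - f(x_i)}$, so that $\limsup_i \norm{\hat{\sigma}(e_i) - \sigma(f)(x_i)} < \epsilon$. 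Thus $\sigma(f)$ is the required approximating section, and the convergence criterion delivers $\hat{\sigma}(e_i) \to \hat{\sigma}(e)$; as the net was arbitrary, $\hat{\sigma}$ is continuous.

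The only subtle point, and the thing I expect to need the most care, is that $\E$ is merely \emph{upper} semicontinuous, so the norm estimate above is a $\limsup$ bound rather than a genuine limit. This is exactly harmless here, since the convergence criterion for upper semicontinuous bundles is itself stated in terms of eventual (i.e.\ $\limsup$) approximation by sections, which is precisely what the contractivity of the quotient maps $\sigma_x$ produces. The remaining ingredients---continuity of the bundle operations on $\E$, density of $\Gamma$ in the fibers, and the abundance of sections of $\E$---are routine.
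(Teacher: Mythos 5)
Your proof is correct and follows essentially the same route as the paper's: approximate $e$ by a continuous section, use that the fiberwise quotient maps are norm-decreasing, and invoke the standard convergence criterion for upper semicontinuous Banach bundles. The only cosmetic difference is that you take an $\epsilon$-approximating section where the paper chooses $f$ with $f(x) = e$ exactly (possible since $\E$ has enough sections), which lets it work with genuine limits rather than $\limsup$ bounds.
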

\begin{proof}
	The proof is nearly identical to the third paragraph of the proof of \cite[Proposition 3.4]{dana-marius}. Let $e \in \E$ and suppose $e_i \to e$. Put $x_i = q(e_i)$ and 
	$x = q(e)$, and choose $f \in \Gamma_c(G, \E)$ with $f(x) = e$. Then $f(x_i) \to e$, so $\norm{f(x_i) - e_i} \to 0$ by Lemma C.18 of \cite{TFB2}. Note that
	\[
		\norm{\hat{\sigma}(f(x_i)) - \hat{\sigma}(e_i)} = \norm{\hat{\sigma}(f(x_i) - e_i)} \leq \norm{f(x_i) - e_i} \to 0
	\]
	and that $\hat{\sigma}(f(x_i)) = \sigma(f)(x_i)$. Since $\sigma(f) \in \Gamma_c(G, \EIJ)$, we have $\sigma(f)(x_i) \to \sigma(f)(x) = \hat{\sigma}(e)$. Therefore, it follows 
	from \cite[Proposition C.20]{TFB2} that $\hat{\sigma}(e_i) \to \hat{\sigma}(e)$. Hence $\hat{\sigma}$ is continuous.
\end{proof}

\begin{prop}
	The bundle $\qIJ : \EIJ \to G$ is a $\B^I - \D^J$-equivalence.
\end{prop}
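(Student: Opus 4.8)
The plan is to verify each axiom in the definition of a $\B^I-\D^J$-equivalence by transporting all of the relevant structure from $\E$ to $\EIJ$ through the fiberwise quotient maps $\sigma_x : E(x) \to E^{I,J}(x)$, together with the bundle maps $q_\B : \B \to \B^I$ and $q_\D : \D \to \D^J$. Axiom (c) is already in hand: for each $x \in G$ the fiber $E^{I,J}(x)$ is a $B^I(r(x))-D^J(s(x))$-imprimitivity bimodule by \cite[Proposition 3.25]{TFB1}, as noted above. Thus the real work is to produce the $\B^I$- and $\D^J$-actions and the two inner products on $\EIJ$, and then to check that they are well defined, algebraically compatible, and---most importantly---continuous.

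First I would define the operations through the quotient maps by setting $q_\B(b) \cdot \hat\sigma(e) = \hat\sigma(b \cdot e)$ and $\hat\sigma(e) \cdot q_\D(d) = \hat\sigma(e \cdot d)$ for the actions, and ${_{\B^I} \la} \hat\sigma(e), \hat\sigma(f) \ra = q_\B\bigl( {_\B \la} e, f \ra \bigr)$ and $\la \hat\sigma(e), \hat\sigma(f) \ra_{\D^J} = q_\D\bigl( \la e, f \ra_\D \bigr)$ for the inner products. Well-definedness is a short computation in the spirit of Lemma \ref{lem:fibermodules}: if $e - e' \in E_{I,J}(x)$ and $b - b'$ lies in the corresponding fiber of $\B_I$, then writing $b \cdot e - b' \cdot e' = b \cdot (e - e') + (b - b') \cdot e'$ and invoking the descriptions $E_{I,J} = E \cdot J = I \cdot E$ from Lemma \ref{lem:fibermodules} and $B_I = I \cdot B = B \cdot I$ from \cite[Lemma 3.1]{dana-marius}, each summand lands in the appropriate fiber of $\EIJ$; a parallel computation handles the inner products. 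The identities (b)(i)--(iv), the sesquilinearity, and the commutativity of the two actions then descend immediately from the corresponding identities in $\E$, since every operation on $\EIJ$ is by construction the image of the operation on $\E$ under a quotient map.

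The main obstacle is continuity, and this is where I expect to spend the most effort. The key technical point is that $\hat\sigma : \E \to \EIJ$ is not merely continuous, as established in Lemma \ref{lem:sigmacts}, but enjoys a net-lifting property: whenever $\tilde e_i \to \tilde e$ in $\EIJ$ and $e \in \E$ satisfies $\hat\sigma(e) = \tilde e$, there exist lifts $e_i \in \E$ with $\hat\sigma(e_i) = \tilde e_i$ and $e_i \to e$. To prove this, choose $f \in \Gamma_c(G, \E)$ with $f(q(e)) = e$; then $\sigma(f)(x_i) \to \sigma(f)(q(e)) = \tilde e$ while $\tilde e_i \to \tilde e$, so $\norm{\tilde e_i - \sigma(f)(x_i)} \to 0$ by \cite[Proposition C.20]{TFB2} and upper semicontinuity of the norm. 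Lifting the vanishing cosets $\tilde e_i - \sigma(f)(x_i)$ to elements of $\E$ of norm at most twice their own (using that each fiber of $\EIJ$ is a Banach-space quotient of the corresponding fiber of $\E$) and adding $f(x_i)$ produces the required $e_i$, with $e_i \to e$ because $f(x_i) \to e$. The same argument applies to $q_\B$ and $q_\D$.

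With this lifting in hand, continuity of each structure map follows by a lifting argument. Given a convergent net in, say, $\B^I *_s \EIJ$, I would lift it coordinatewise through $q_\B$ and $\hat\sigma$ to a convergent net in $\B *_s \E$---composability of the lifts is automatic, since $q_\B$ and $\hat\sigma$ preserve base points---then apply the continuous $\B$-action on $\E$ and finally the continuous map $\hat\sigma$; as $\hat\sigma(b_i \cdot e_i) = q_\B(b_i) \cdot \hat\sigma(e_i)$, the images converge to the correct limit. The identical template, with $q_\B$ or $q_\D$ replacing $\hat\sigma$ on the target, establishes continuity of the two inner products. Once continuity is verified, all the axioms of \cite[Definition 6.1]{muhly-williams} are satisfied, so $\qIJ : \EIJ \to G$ is a $\B^I-\D^J$-equivalence, and the Morita equivalences of the associated full and reduced Fell bundle $C^*$-algebras follow exactly as in the case of $\Eij$.
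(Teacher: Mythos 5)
Your proposal is correct, and its overall skeleton matches the paper's proof: define the actions and inner products by pushing the operations on $\E$ through the fiberwise quotient maps, verify well-definedness using the descriptions $E_{I,J}(x) = I(r(x))\cdot E(x) = E(x)\cdot J(s(x))$ from Lemma \ref{lem:fibermodules} together with the analogous descriptions of the fibers of $\B_I$ and $\D_J$, let the algebraic identities (b)(i)--(iv) descend for free, and invoke \cite[Proposition 3.25]{TFB1} for the fiberwise imprimitivity bimodule structure. The one place where you genuinely diverge is the continuity verification. The paper argues directly: for the left action, say, it chooses sections $f \in \Gamma_c(G, \B)$ and $g \in \Gamma_c(G, \E)$ through representatives of the limit elements, observes that $[f(x_i)]\cdot[g(y_i)] \to [b]\cdot[e]$ via Lemma \ref{lem:sigmacts}, and then kills $\norm{[f(x_i)]\cdot[g(y_i)] - [b_i]\cdot[e_i]}$ with a bilinearity estimate and eventual boundedness of the norms before applying Proposition C.20 of \cite{TFB2}; this estimate is then repeated, with the Cauchy--Schwarz-type bound in place of the bilinearity bound, for each of the four structure maps. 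You instead isolate a net-lifting lemma for $\hat{\sigma}$ (and for the quotient bundle maps on $\B$ and $\D$) and obtain continuity of all four maps at once by lifting, operating upstairs, and pushing back down through the continuous map $\hat{\sigma}$. Your lifting lemma is sound; the only points of care are that a coset of quotient norm zero must be lifted to $0$ (harmless, since the fibers of $\Eij$ are closed submodules), and that the convergence fact you want for $\norm{\tilde{e}_i - \sigma(f)(x_i)} \to 0$ is Lemma C.18 of \cite{TFB2} rather than Proposition C.20, which is the converse direction you correctly use at the very end. The two routes are of comparable length: yours is more modular and handles all four operations uniformly, while the paper's avoids having to choose small-norm representatives in quotient Banach spaces.
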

\begin{proof}
	First we need to produce commuting actions of $\B^I$ and $\D^J$ on the left and right, respectively, of $\EIJ$. Let $\hat{\rho}^\B : \B \to \B^I$ and 
	$\hat{\rho}^\D : \D \to \D^J$ denote the bundle maps coming from the canonical fiberwise quotient maps. We define
	\[
		\hat{\rho}^\B(b) \cdot \hat{\sigma}(e) = \hat{\sigma}(b \cdot e)
	\]
	for $b \in \B$ and $e \in \E$ with $s(b) = r(e)$ and
	\[
		\hat{\sigma}(e) \cdot \hat{\rho}^\D(d) = \hat{\sigma}(e \cdot d)
	\]
	for $e \in \E$ and $d \in \D$ with $s(e) = r(d)$. These formulas will clearly define commuting actions once we know that they are well-defined. We will prove 
	the left $\B^I$-action is well-defined, and the proof for the right $\D^J$ action is similar.
	
	In order to keep the notation manageable throughout the remainder of the proof, we will write $[ \cdot ]$ to denote the class of a bundle element in the appropriate 
	quotient bundle. That is, $[b] = \hat{\rho}^\B(b)$ for $b \in \B$, $[d] = \hat{\rho}^\D(d)$ for $d \in \D$, and $[e] = \hat{\sigma}(e)$ for $e \in \E$. Thus our actions 
	look like
	\[
		[b] \cdot [e] = [b \cdot e], \quad (b, e) \in \B * \E
	\]
	and
	\[
		[e] \cdot [d] = [e \cdot d], \quad (e, d) \in \E * \D.
	\]	
	Let $b \in \B$ and $e \in \E$, set $x = p(b)$ and $y = q(e)$, and suppose $s(x) = r(y)$. Let $b' \in B_I(x)$ and $e' \in E_{I,J}(y)$. Then
	\[
		[b+b'] \cdot [e+e'] = [(b+b') \cdot (e+e')] = [b \cdot e + b \cdot e' + b' \cdot e + b' \cdot e'].
	\]
	Observe that $b \cdot e' \in B(x) \cdot E_{I,J}(y)$, where
	\[
		B(x) \cdot E_{I,J}(y) = B(x) \cdot E(y) \cdot J(s(y)) = E(xy) \cdot J(s(xy)) = E_{I,J}(xy)
	\]
	by Lemma 6.2 of \cite{muhly-williams}. Similar arguments show that $B_I(x) \cdot E(y) = E_{I,J}(xy)$ and $B_I(x) \cdot E_{I,J}(y) = E_{I,J}(xy)$, so $b \cdot e', b' \cdot e$, 
	and $b' \cdot e'$ all belong to $E_{I,J}(xy)$. Therefore,
	\[
		(b+b') \cdot (e+e') \in b \cdot e + E_{I,J}(xy),
	\]
	so
	\[
		[(b+b') \cdot (e+e')] = [b \cdot e] = [b] \cdot [e],
	\]
	and the action is well-defined.
	
	We also need to check that the actions are continuous. Again, we do it for the $\B^I$-action, and the $\D^J$-action is similar. Suppose $[b_i] \to [b]$
	in $\B^I$ and $[e_i] \to [e]$ in $\EIJ$, where $s(b_i) = r(e_i)$ for all $i$ and $s(b) = r(e)$. Put $x_i = s(b_i)$, $x = s(b)$, $y_i = q(e_i)$, and $y = q(e)$.
	Choose sections $f \in \Gamma_c(G, \B)$ and $g \in \Gamma_c(G, \E)$ such that $\rho^\B(f)(x) =[f(x)] = [b]$ and $\sigma(g)(y) = [e]$, where 
	$\rho^\B : \Gamma_c(G, \B) \to \Gamma_c(G, \B^I)$ denotes the quotient map. Since $f(x_i) \to f(x)$ and $g(y_i) \to g(y)$, we have
	\[
		\rho^\B(f)(x_i) \cdot \sigma(g)(y_i) = [f(x_i) \cdot g(y_i)] \to [f(x) \cdot g(y)] = [b] \cdot [e]
	\]
	by Lemma \ref{lem:sigmacts}. Also,
	\begin{align*}
		& \norm{{\rho}^\B(f)(x_i) \cdot \sigma(g)(y_i) - [b_i] \cdot [e_i]} \\
			& \qquad \leq \norm{[f(x_i)] \cdot [g(y_i)] - [b_i] \cdot [g(y_i)]} + \norm{[b_i] \cdot [g(y_i)] - [b_i] \cdot [e_i]} \\
			& \qquad \leq \norm{[f(x_i)] - [b_i]} \norm{[g(y_i)]} + \norm{[g(y_i)] - [e_i]} \norm{[b_i]}.
	\end{align*}
	Since $[f(x_i)] \to [b]$ and $[b_i] \to [b]$, we know that $\norm{[f(x_i)] - [b_i]} \to 0$. Similarly, $\norm{[g(y_i)] - [e_i]} \to 0$. Moreover, $\norm{[g(y_i)]}$ and $\norm{[b_i]}$ 
	are eventually bounded (they converge and the norm is upper semicontinuous), so 
	\[
		\norm{{\rho}^\B(f)(x_i) \cdot \sigma(g)(y_i) - [b_i] \cdot [e_i]} = \norm{[f(x_i)] \cdot [g(y_i)] - [b_i] \cdot [e_i]} \to 0.
	\]
	It follows from Proposition C.20 of \cite{TFB2} that $[b_i] \cdot [e_i] \to [b] \cdot [e]$. Therefore, the $\B^I$-action is 
	continuous.
	
	Now we need to define sesquilinear forms on $\EIJ$. For $e, f \in \E$ with $s(e) = s(f)$, we define
	\[
		{_{\B^I} \la } [e], [f] \ra = [ {_{\B} \la} e, f \ra ].
	\]
	Similarly, if $r(e) = r(f)$, we set
	\[
		\la [e], [f] \ra_{\D^J} = [ \la e, f \ra_\D ].
	\]
	Again, we need to check that these forms are well-defined. Let $e, f \in \E$ with $r(e) = r(f)$, and put $x = q(e)$ and $y = q(f)$. If $e' \in E_{I,J}(x)$ and 
	$f' \in E_{I,J}(y)$, then
	\[
		{_\B \la} e+e', f+f' \ra = {_\B \la} e, f \ra + {_\B \la} e', f \ra + {_\B \la} e, f' \ra + {_\B \la} e', f' \ra.
	\]
	Since $E_{I,J}(x) = I(r(x)) \cdot E(x)$, we can write $e' = a \cdot e''$ for some $a \in I(r(x))$ and $e'' \in E(x)$. Thus
	\[
		{_\B \la} e', f \ra = {_\B \la} a \cdot e'', f \ra = a {_\B \la} e'', f \ra \in I(r(x)) \cdot B(xy^{-1}) = B_{I,J}(xy^{-1}).
	\]
	Similarly, $f' = b \cdot f''$ for some $b \in I(r(y))$ and $f'' \in E(y)$, so
	\[
		{_\B \la} e, f' \ra = {_\B \la} e, b \cdot f'' \ra = b^* {_\B \la} e, f'' \ra \in I(r(x)) \cdot B(xy^{-1}) = B_{I,J}(xy^{-1}).
	\]
	Finally, ${_\B \la} e', f' \ra \in B_{I,J}(xy^{-1})$ since $e', f' \in \Eij$, so
	\[
		{_\B \la} e + e', f+f' \ra \in {_\B \la} e, f \ra + B_{I,J}(xy^{-1}).
	\]
	Thus $[ {_\B \la} e+e', f+f' \ra] = [ {_\B \la} e, f \ra]$ in $\B^I$, and the form is well-defined. The proof for the $\D^J$-valued sesquilinear form is similar.
	
	Now we show the forms are continuous. Suppose $[e_i] \to [e]$ and $[f_i] \to [f]$ in $\EIJ$, where $s(e_i) = s(f_i)$ for all $i$ and $s(e) = s(f)$. Let $x_i = q(e_i)$, $x = q(e)$,
	$y_i = q(f_i)$, and $y = q(f)$, and choose sections $\xi, \eta \in \Gamma_c(G, \E)$ such that $[\xi(x)] = [e]$ and $[\eta(y)] = f$. Then $\xi(x_i) \to \xi(x)$ and 
	$\eta(y_i) \to \eta(y)$, so
	\[
		{_\B \la} \xi(x_i), \eta(y_i) \ra \to {_\B \la} \xi(x), \xi(y) \ra.
	\]
	Hence
	\[
		[{_\B \la} \xi(x_i), \eta(y_i) \ra] \to [{_\B \la} \xi(x), \xi(y) \ra]
	\]
	or equivalently,
	\[
		{_{\B^I} \la} [\xi(x_i)], [\eta(y_i)] \ra \to {_{\B^I} \la } [\xi(x)], [\eta(y)] \ra = {_{\B^I} \la} [e], [f] \ra.
	\]
	Moreover,
	\begin{align*}
		&\norm{ {_{\B^I} \la} [\xi(x_i)], [\eta(y_i)] \ra - {_{\B^I} \la} [e_i], [f_i] \ra} \\
			& \quad \leq \norm{ {_{\B^I} \la} [\xi(x_i)], [\eta(y_i)] \ra - {_{\B^I} \la} [\xi(x_i)], [f_i] \ra} + \norm{ {_{\B^I} \la} [\xi(x_i)], [f_i] \ra  - {_{\B^I} \la} [e_i], [f_i] \ra} \\
			& \quad = \norm{ {_{\B^I} \la} [\xi(x_i)], [\eta(y_i)] - [f_i] \ra } + \norm{ {_{\B^I} \la} [\xi(x_i)] - [e_i] , [f_i] \ra } \\
			& \quad \leq \norm{ [\xi(x_i)] } \norm{ [\eta(y_i)] - [f_i] } + \norm{ [\xi(x_i)] - [e_i] } \norm{ [f_i] },
	\end{align*}
	which tends to 0 by the same reasoning as that for the module actions. Therefore, ${_{\B^I} \la} [e_i], [f_i] \ra \to {_{\B^I} \la} [e], [f] \ra$ by Proposition C.20 
	of \cite{TFB2}.
	
	We also have some algebraic conditions to verify, which are fairly straightforward. First notice that if $(e, f) \in \E *_s \E$, 
	\[
		{_{\B^I} \la} [e], [f] \ra^* = [{_\B \la} e, f \ra^*] = [{_\B \la} f, e \ra] = {_{\B^I} \la} [f], [e] \ra.
	\]
	Furthermore, if $b \in \B$ with $s(b) = r(e)$, then
	\[
		[b] \cdot {_{\B^I} \la} [e], [f] \ra = [ b \cdot {_\B \la} e, f \ra] = [ {_\B \la} b \cdot e, f \ra] = {_{\B^I} \la} [b] \cdot [e], [f] \ra,
	\]
	and similarly for the right $\D^J$-action. Also, if $g \in \E$ with $r(g) = r(f)$, then
	\[
		{_{\B^I} \la} [e], [f] \ra \cdot [g] = [ {_\B \la} e, f \ra \cdot g] = [ e \cdot \la f, g \ra_\D] = [e] \cdot \la [e], [f] \ra_{\D^J},
	\]
	as required.
	
	Finally, we have already argued that $E^{I,J}(x)$ is a $A(r(x))-C(s(x))$-imprimitivity bimodule for all $x \in G$. Therefore, $\EIJ$ is a $\B^I-\D^J$-equivalence.
\end{proof}

Since $\EIJ$ is a $\B^I-\D^J$-equivalence, we know that $\Gamma_c(G, \EIJ)$ completes to a $C_r^*(G, \B^I) - C_r^*(G, \D^J)$-imprimitivity bimodule 
$\mathsf{X}^{I,J}$. We would like to know that this module is compatible with the $C_r^*(G, \B) - C_r^*(G, \D)$-imprimitivity bimodule $\mathsf{X} = 
\overline{\Gamma_c(G, \E)}$ in a certain sense. We claim that the continuous bundle map $\hat{\sigma} : \E \to \Eij$ induces a linear map 
$\sigma : \mathsf{X} \to \mathsf{X}^{I,J}$, which is characterized by
\[
	\sigma(f)(x) = \hat{\sigma}(f(x))
\]
for $f \in \Gamma_c(G, \E)$. Moreover, $\sigma$ respects the module actions on $\mathsf{X}$ and $\mathsf{X}^{I,J}$.

\begin{prop}
\label{prop:sigma}
	The map $\sigma : \Gamma_c(G, \E) \to \Gamma_c(G, \EIJ)$ defined above extends to a surjective linear map $\sigma : \mathsf{X} \to \mathsf{X}^{I,J}$. Moreover, for 
	all $x,y \in \mathsf{X}$, $a \in C_r^*(G, \B^I)$, and $b \in C_r^*(G, \D^J)$, we have
	\[
		\sigma(a \cdot x) = \rho^\B(a) \cdot \sigma(x), \quad \sigma(x \cdot b) = \sigma(x) \cdot \rho^\D(b)
	\]
	and
	\[
		\rho^\B( {_*\la} x, y \ra ) = {_* \la} \sigma(x), \sigma(y) \ra, \quad \rho^\D( \la x, y \ra_*) = \la \sigma(x), \sigma(y) \ra_*,
	\]
	where $\rho^\B : C_r^*(G, \B) \to C_r^*(G, \B^I)$ and $\rho^\D : C_r^*(G, \D) \to C_r^*(G, \D^J)$ denote the quotient maps.
\end{prop}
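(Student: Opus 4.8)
The plan is to check all four identities first on the dense subspace $\Gamma_c(G, \E)$, where they reduce to fiberwise computations, and then to extend everything by continuity; the only genuinely analytic input is the boundedness of $\sigma$, which I would read off from the inner-product identity. To begin, observe that $\sigma$ really does map into $\Gamma_c(G, \EIJ)$: for $f \in \Gamma_c(G, \E)$ the section $\sigma(f) = \hat{\sigma} \circ f$ is precisely a member of the family $\Gamma$ of Proposition~\ref{prop:ILT}, which was constructed to lie inside $\Gamma_c(G, \EIJ)$.

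Next I would establish the four algebraic identities at the level of $\Gamma_c$-sections. The essential ingredients are the compatibility relations for $\hat{\sigma}$ recorded in the preceding proposition, namely $\hat{\sigma}(b \cdot e) = \hat{\rho}^\B(b) \cdot \hat{\sigma}(e)$, $\hat{\sigma}(e \cdot d) = \hat{\sigma}(e) \cdot \hat{\rho}^\D(d)$, $\hat{\rho}^\D(\la e, f \ra_\D) = \la \hat{\sigma}(e), \hat{\sigma}(f) \ra_{\D^J}$, and the analogous left-handed statement for ${_{\B^I}\la} \cdot, \cdot \ra$. Since the module actions and inner products on $\Gamma_c(G, \E)$ are all defined by integrating the corresponding fiberwise operations against the Haar system, and since $\hat{\sigma}$, $\hat{\rho}^\B$, $\hat{\rho}^\D$ are continuous and linear and thus commute with these integrals, pulling the relevant map under the integral sign produces the four identities on $\Gamma_c$. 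For instance,
\[
	\la \sigma(\xi), \sigma(\eta) \ra_{\Gamma_c(G, \D^J)}(y) = \hat{\rho}^\D\bigl( \la \xi, \eta \ra_{\Gamma_c(G, \D)}(y) \bigr) = \rho^\D\bigl( \la \xi, \eta \ra_{\Gamma_c(G, \D)} \bigr)(y),
\]
and the remaining three identities follow in the same manner.

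The inner-product identity then supplies boundedness, which is the crux of the argument. Viewing $\mathsf{X}$ and $\mathsf{X}^{I,J}$ as right Hilbert modules over $C_r^*(G, \D)$ and $C_r^*(G, \D^J)$, respectively, and using that the quotient map $\rho^\D$ is a $*$-homomorphism, I would estimate
\[
	\norm{\sigma(\xi)}^2 = \norm{\la \sigma(\xi), \sigma(\xi) \ra_{C_r^*(G, \D^J)}} = \norm{\rho^\D\bigl( \la \xi, \xi \ra_{C_r^*(G, \D)} \bigr)} \leq \norm{\la \xi, \xi \ra_{C_r^*(G, \D)}} = \norm{\xi}^2
\]
for every $\xi \in \Gamma_c(G, \E)$. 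Hence $\sigma$ is norm-decreasing for the reduced norms and extends to a bounded linear map $\sigma : \mathsf{X} \to \mathsf{X}^{I,J}$. As each side of the four identities is separately continuous in its arguments and the identities hold on the dense subspaces, they persist on all of $\mathsf{X}$.

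Finally, I would obtain surjectivity by recognizing $\sigma$ as a quotient map of Hilbert modules. The displayed estimate is in fact an equality, so $\sigma(\xi) = 0$ precisely when $\la \xi, \xi \ra_{C_r^*(G, \D)} \in \ker \rho^\D = C_r^*(G, \D_J)$; by the standard characterization of $\mathsf{X} \cdot C_r^*(G, \D_J) = \mathsf{X}_{J, r}$ as the set of vectors whose self-inner-product lies in the ideal, this is exactly the condition $\xi \in \mathsf{X}_{J, r}$. Thus $\sigma$ descends to an \emph{isometric} map $\mathsf{X} / \mathsf{X}_{J, r} \to \mathsf{X}^{I, J}$, whose range is consequently closed. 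That range contains $\sigma(\Gamma_c(G, \E)) = \Gamma$, which is dense in $\Gamma_c(G, \EIJ)$ by Proposition~\ref{prop:ILT} and therefore dense in $\mathsf{X}^{I, J}$; a closed subspace that is dense is the whole space, so $\sigma$ is surjective. I anticipate the main obstacle to be bookkeeping rather than conceptual difficulty---specifically, the careful justification that the continuous bundle maps may be drawn through the Haar-system integrals in the fiberwise verification---since boundedness and surjectivity both fall out cleanly once the inner-product identity is in hand.
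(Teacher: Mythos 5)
Your proposal is correct in outline and, for the algebraic identities and the norm estimate, follows essentially the same route as the paper: both arguments pull $\hat{\sigma}$, $\hat{\rho}^\B$, and $\hat{\rho}^\D$ through the Haar-system integrals on $\Gamma_c$-sections and then read boundedness off the $\D^J$-valued inner-product identity. The genuine divergence is in the surjectivity step, and here the two arguments are complementary. The paper devotes most of its effort to showing that inductive-limit convergence in $\Gamma_c(G, \EIJ)$ forces convergence in the Hilbert-module norm (via a uniform-norm and support estimate on $\la \xi_i - \xi, \xi_i - \xi \ra_*$), so that the inductive-limit density of $\Gamma$ from Proposition \ref{prop:ILT} yields norm density of the range; you instead identify $\ker \sigma$ with $\mathsf{X}_{J,r}$ and observe that $\sigma$ descends to an isometry of $\mathsf{X}/\mathsf{X}_{J,r}$ into $\mathsf{X}^{I,J}$, so the range is closed and density finishes the job. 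Your closed-range observation is a real gain in completeness---the paper's passage from ``dense range'' to ``surjective'' is precisely what your quotient-isometry argument justifies---but you have silently discarded the one substantive verification the paper does make: your phrase ``and therefore dense in $\mathsf{X}^{I,J}$'' assumes that inductive-limit density implies reduced-norm density, which is exactly the estimate the paper proves at length; you should supply or cite it. Two smaller points: the sentence ``the displayed estimate is in fact an equality'' is false as written, since $\norm{\rho^\D(c)} \leq \norm{c}$ is not an equality in general; what you actually need, and what is true, is only the equality $\norm{\sigma(\xi)}^2 = \norm{\rho^\D(\la \xi, \xi \ra_*)}$, which identifies $\norm{\sigma(\xi)}$ with the quotient-module norm of $\xi$ via \cite[Proposition 3.25]{TFB1}. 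Likewise the identification of $\{\xi : \la \xi, \xi \ra_* \in C_r^*(G, \D_J)\}$ with $\mathsf{X} \cdot C_r^*(G, \D_J) = \mathsf{X}_{J,r}$ should be pinned to \cite[Lemma 3.23]{TFB1} and the discussion preceding Proposition \ref{prop:rieffel}.
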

\begin{proof}
	The topology on $\EIJ$ was defined in such a way to ensure that 
	\[
		\Gamma = \{ \sigma(f) : f \in \Gamma_c(G, \E) \}
	\]
	is contained in $\Gamma_c(G, \EIJ)$. Therefore, $\sigma$ defines a map from $\Gamma_c(G, \E)$ into $\Gamma_c(G, \EIJ)$, which is easily seen to
	be linear. If we let $f \in \Gamma_c(G, \B)$ and $\xi \in \Gamma_c(G, \E)$, then
	\begin{align*}
		\sigma(f \cdot \xi)(x) &= \hat{\sigma}(f \cdot \xi (x)) \\
			&= \hat{\sigma} \left( \int_G f(y) \cdot \xi(y^{-1}x) \, d\lambda^{r(x)}(y) \right) \\
			&= \int_G \sigma_x \bigl( f(y) \cdot \xi(y^{-1}x) \bigr) \, d\lambda^{r(x)}(y) \\
			&= \int_G \rho^\B_y \bigl( f(y) \bigr) \cdot \sigma_{y^{-1}x} \bigl( \xi(y^{-1}x) \bigr) \, d\lambda^{r(x)}(y) \\
			&= \bigl( \rho^B(f) \cdot \sigma(\xi) \bigr) (x)
	\end{align*}
	A similar computation shows that $\sigma(\xi \cdot g) = \sigma(\xi) \cdot \rho^\D(g)$ for all $\xi \in \Gamma_c(G, \E)$ and $g \in \Gamma_c(G, \D)$. Now suppose 
	$\xi, \eta \in \Gamma_c(G, \E)$. Then
	\begin{align*}
		\rho^\B \bigl( {_* \la} \xi, \eta \ra \bigr) &= \rho^\B_x \bigl( {_* \la} \xi, \eta \ra(x) \bigr) \\
			&= \rho^\B_x \left( \int_G {_\B \la} \xi(xy), \eta(y) \ra \, d\lambda^{s(x)}(y) \right) \\
			&= \int_G \rho^\B_x \bigl( {_\B \la} \xi(xy), \eta(y) \ra \bigr) \, d\lambda^{s(x)}(y) \\
			&= \int_G {_{\B^I} \la} \sigma_{xy} \bigl(\xi(xy)\bigr), \sigma_y \bigl( \eta(y) \bigr) \ra \, d\lambda^{s(x)}(y) \\
			&= {_* \la} \sigma(\xi), \sigma(\eta) \ra(x).
	\end{align*}
	Similarly, $\rho^\D(\la \xi, \eta \ra_*) = \la \sigma(\xi), \sigma(\eta) \ra_*$ for all $(\xi, \eta) \in \E *_r \E$.
	Using the latter fact, it is fairly easy to show that $\sigma$ is bounded:
	\[
		\norm{\sigma(f)}_* = \norm{\la \sigma(f), \sigma(f) \ra_*}^{1/2} = \norm{\rho^\D \bigl( \la f, f \ra_* \bigr)}^{1/2} \leq \norm{ \la f, f \ra_*}^{1/2} = \norm{f}.
	\]
	Thus $\sigma$ extends to a module map $\sigma : \mathsf{X} \to \mathsf{X}^{I,J}$.
	
	All that remains is to see that $\sigma$ is surjective. We already know that the range of $\sigma$ is dense in $\Gamma_c(G, \EIJ)$ with respect to the 
	inductive limit topology, so we just need to show that density in norm follows. Suppose $\xi_i \to \xi$ in $\Gamma_c(G, \EIJ)$ in the inductive limit topology. 
	Then arguments like those of \cite[Lemma 8.1(b)]{mw08} and \cite[Lemma 5.5]{lalonde2014} show that $\la \xi_i - \xi, \xi_i - \xi \ra_* \to 0$ in 
	$\Gamma_c(G, \D^J)$ with respect to the inductive limit topology. It is straightforward to show that $\la \xi_i - \xi, \xi_i - \xi \ra_* \to 0$ uniformly: observe that
	\[
		\norm{ \la \xi_i - \xi, \xi_i - \xi \ra_* }_\infty = \sup_{x \in G} \norm{ \la \xi_i - \xi, \xi_i - \xi \ra_*(x) },
	\]
	where
	\begin{align*}
		\norm{ \la \xi_i - \xi, \xi_i - \xi \ra_*(x) } &= \norm{ \int_G \la (\xi_i - \xi)(y^{-1}), (\xi_i - \xi)(y^{-1}x) \ra_* \, d\lambda^{r(x)}(y)} \\
			& \leq \int_G \norm{ \la (\xi_i - \xi)(y^{-1}), (\xi_i - \xi)(y^{-1}x) \ra_*} \, d\lambda^{r(x)}(y) \\
			& \leq \int_G \norm{ (\xi_i - \xi)(y^{-1}) } \norm{ (\xi_i - \xi)(y^{-1}x) } \, d\lambda^{r(x)}(y).
	\end{align*}
	For sufficiently large $i$, the sets $\supp(\xi_i - \xi)$ are contained in a fixed compact set $K$, so we eventually have
	\[
		\norm{ \la \xi_i - \xi, \xi_i - \xi \ra_*(x) } \leq \norm{\xi_i - \xi}_\infty^2 \cdot \lambda^{r(x)}(K).
	\]
	Thus
	\[
		\norm{ \la \xi_i - \xi, \xi_i - \xi \ra_* }_\infty \leq \sup_{x \in G} \norm{\xi_i - \xi}_\infty^2 \cdot \lambda^{r(x)}(K) = \norm{\xi_i - \xi}_\infty^2 \cdot 
			\sup_{u \in \go} \lambda^u(K).
	\]
	Since $K$ is compact, the supremum is finite. Thus $\xi_i \to \xi$ uniformly implies that $\la \xi_i - \xi, \xi_i - \xi \ra_* \to 0$ uniformly in $\Gamma_c(G, \D^J)$. 
	It remains to see that the functions $\la \xi_i - \xi, \xi_i - \xi \ra_*$ are eventually supported in a fixed compact set. Since $\xi_i \to \xi$ in the inductive limit 
	topology on $\Gamma_c(G, \EIJ)$, there is a compact set $K_0 \subset G$ that eventually contains $\supp(\xi_i)$ and $\supp(\xi)$. Form the compact set 
	$K_0 *_r K_0 \subseteq G *_r G$, and let $\varphi : G *_r G \to G$ be the map defined by $\varphi(z, w) = z^{-1} w$. Notice that $K = \varphi(K_0 *_r K_0)$ is 
	compact. Furthermore,
	\[
		\la \xi_i - \xi, \xi_i - \xi \ra_*(x) =  \int_G \la (\xi_i - \xi)(y^{-1}), (\xi_i - \xi)(y^{-1}x) \ra_* \, d\lambda^{r(x)}(y),
	\]
	and the integrand is nonzero only when $y^{-1}, y^{-1} x \in K_0$. Suppose $y^{-1} x = z \in K_0$, or $x = yz$. Then $x \in K$. Therefore, $\la \xi_i - \xi, \xi_i - \xi \ra_*(x)$
	is zero whenever $x \not\in K$, and it follows that $\supp(\la \xi_i - \xi, \xi_i - \xi \ra_*)$ is eventually contained in $K$. Thus $\la \xi_i - \xi, \xi_i - \xi \ra_* \to 0$ in the inductive
	limit topology. It is then straightforward to see that
	\[
		\norm{\xi_i - \xi}^2 = \norm{\la \xi_i - \xi, \xi_i - \xi \ra}_r \leq \norm{\la \xi_i - \xi, \xi_i - \xi \ra} \to 0,
	\]
	so $\xi_i \to \xi$ with respect to the norm on $\mathsf{X}^{I,J}$. Thus density with respect to the inductive limit topology implies norm density, so the range of $\sigma$ is 
	dense in $\Gamma_c(G, \EIJ)$. Thus $\sigma : \mathsf{X} \to \mathsf{X}^{I,J}$ is surjective.
\end{proof}

With the last proposition in hand, we are almost ready to prove our main result. There is one lemma regarding Morita equivalence that we need first, however. It 
is likely evident to experts, but we present a complete proof here. One can think of it as a partial converse to Proposition 3.25 of \cite{TFB1}.

\begin{lem}
\label{lem:MElemma}
	Let $A$ and $B$ be $C^*$-algebras and suppose $I \subseteq A$ and $J \subseteq B$ are ideals. Suppose further that $\mathsf{X}$ is an $A-B$-imprimitivity 
	bimodule, $\mathsf{Y}$ is an $A/I-B/J$-imprimitivity bimodule, and there is a surjective linear map $q : \mathsf{X} \to \mathsf{Y}$ satisfying
	\begin{align*}
		q(a \cdot x) &= p^A(a) \cdot q(x) \\
		q(x \cdot b) &= q(x) \cdot p^B(b) \\
		p^A \bigl( {_A \la} x, y \ra \bigr) &= {_{A/I} \la} q(x), q(y) \ra \\
		p^B \bigl( \la x, y \ra_B \bigr) &= \la q(x), q(y) \ra_{B/J}
	\end{align*}
	for all $x, y \in \mathsf{X}$, $a \in A$, and $b \in B$, where $p^A : A \to A/I$ and $p^B : B \to B/J$ denote the canonical quotient maps. Then $I$ and $J$ are 
	paired under the Rieffel correspondence associated to $\mathsf{X}$.
\end{lem}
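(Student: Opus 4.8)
The plan is to compute $\ker q$ in two complementary ways and thereby show directly that the two closed submodules defining the Rieffel correspondence coincide. Recall that $I$ and $J$ are \emph{paired} precisely when $\overline{I \cdot \mathsf{X}} = \overline{\mathsf{X} \cdot J}$ inside $\mathsf{X}$, so it suffices to exhibit a single closed submodule equal to both. First I would check that $q$ is contractive: for each $x \in \mathsf{X}$ the inner-product relation yields $\norm{q(x)}^2 = \norm{\la q(x), q(x) \ra_{B/J}} = \norm{p^B(\la x, x \ra_B)} \le \norm{\la x, x \ra_B} = \norm{x}^2$. In particular $N = \ker q$ is closed, and the two intertwining identities $q(a \cdot x) = p^A(a) \cdot q(x)$ and $q(x \cdot b) = q(x) \cdot p^B(b)$ show that $N$ is a closed $A$-$B$-submodule of $\mathsf{X}$.

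Next I would identify $N$ with $\overline{\mathsf{X} \cdot J}$. Since the inner product on the imprimitivity bimodule $\mathsf{Y}$ is positive definite, $q(x) = 0$ if and only if $\la q(x), q(x) \ra_{B/J} = 0$, and by hypothesis this quantity equals $p^B(\la x, x \ra_B)$; hence $q(x) = 0$ if and only if $\la x, x \ra_B \in J$. Thus $N = \{ x \in \mathsf{X} : \la x, x \ra_B \in J \}$, which is exactly $\overline{\mathsf{X} \cdot J}$ by \cite[Lemma 3.23]{TFB1}. Running the identical argument with the $A$-valued inner product and the quotient map $p^A$ gives $N = \{ x \in \mathsf{X} : {_A \la} x, x \ra \in I \}$, and the left-handed analog of \cite[Lemma 3.23]{TFB1} identifies this set with $\overline{I \cdot \mathsf{X}}$. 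Combining the two descriptions of $N$ yields $\overline{I \cdot \mathsf{X}} = N = \overline{\mathsf{X} \cdot J}$, which is precisely the statement that $I$ and $J$ are paired.

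I do not expect a serious analytic obstacle here; the argument is really a bookkeeping exercise resting on the four intertwining identities together with positive-definiteness of the inner products on $\mathsf{Y}$. The one point needing care is the passage from the algebraic kernel condition $\la x, x \ra_B \in J$ to the geometric submodule $\overline{\mathsf{X} \cdot J}$, which is where \cite[Lemma 3.23]{TFB1} (and its left-handed counterpart) does the real work; if one preferred not to quote the left version, one would simply rerun the same hereditary-ideal computation using the $A$-valued inner product. It is perhaps worth remarking that surjectivity of $q$ plays no role in this particular argument: only the intertwining relations and the positive-definiteness of the inner products on $\mathsf{Y}$ are used.
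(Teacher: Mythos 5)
Your argument is correct, but it takes a genuinely different route from the paper. The paper's proof is representation-theoretic: it starts with a faithful representation $\pi$ of $B/J$, forms the induced representation $\mathsf{Y}\text{--}\Ind \pi$ of $A/I$, and builds a unitary $U_0(x \otimes h) = q(x) \otimes h$ intertwining $(\mathsf{Y}\text{--}\Ind \pi) \circ p^A$ with $\mathsf{X}\text{--}\Ind(\pi \circ p^B)$; comparing kernels then gives $I = \mathsf{X}\text{--}\Ind J$ directly. Your proof instead works entirely inside the bimodule: you compute $\ker q$ twice, once as $\{x : \la x, x \ra_B \in J\} = \overline{\mathsf{X} \cdot J}$ and once as $\{x : {_A\la} x, x \ra \in I\} = \overline{I \cdot \mathsf{X}}$, and conclude that the two canonical closed submodules coincide, which is the standard formulation of the pairing. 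Both arguments are sound. Yours is more elementary (no Hilbert-space representations or internal tensor products are needed), and your closing observation is accurate and worth noting: surjectivity of $q$ plays no role in your version, whereas the paper genuinely uses it to get dense range of $U_0$, without which the intertwining relation would only yield containment of one kernel in the other. What the paper's approach buys is that it produces the pairing in the form $I = \mathsf{X}\text{--}\Ind J$ explicitly, matching the induced-representation language used elsewhere in Section 4; your approach instead leans on the submodule characterization of the Rieffel correspondence and on \cite[Lemma 3.23]{TFB1} together with its left-handed counterpart, which you correctly flag as the one step carrying real content.
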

\begin{proof}
	Let $\pi : B/J \to B(\Hil)$ be a faithful representation, and put $\tilde{\pi} = \pi \circ p^B$. Then $\tilde{\pi}$ is a representation of $B$ on $\Hil$ with kernel $J$. 
	Form the induced representation $\rho = \mathsf{Y}\text{--}\Ind \pi$ of $A/I$ on $Y \otimes \Hil$, and note that $\rho$ is faithful. Thus $\tilde{\rho} = \rho \circ p^A$ 
	is a representation of $A$ on $\mathsf{Y} \otimes \Hil$ with kernel $I$. It will therefore suffice to show that $\tilde{\rho}$ is unitarily equivalent to 
	$\mathsf{X}\text{--}\Ind \tilde{\pi}$, which acts on $\mathsf{X} \otimes \Hil$.
	
	Define $U_0 : \mathsf{X} \odot \Hil \to \mathsf{Y} \odot \Hil$ on elementary tensors by $U_0(x \otimes h) = q(x) \otimes h$. Observe that
	\begin{align*}
		\ip{U_0(x \otimes h)}{U_0(y \otimes k)} &= \ip{q(x) \otimes h}{q(y) \otimes k} \\
			&= \ip{\pi \bigl( \la q(y), q(x) \ra_{B/I} \bigr) h}{k} \\
			&= \ip{\pi \bigl( p^B(\la y, x \ra_B) \bigr) h }{k} \\
			&= \ip{ \tilde{\pi} \bigl( \la y, x \ra_B \bigr) h}{k} \\
			&= \ip{x \otimes h}{y \otimes k},
	\end{align*}
	where the last inner product is taken in $X \otimes \Hil$. Thus $U_0$ is isometric. It maps $X \odot \Hil$ onto $Y \odot \Hil$ since $q$ is surjective. 
	Thus $U_0$ extends to a unitary $U : X \otimes \Hil \to Y \otimes \Hil$.
	
	We now claim that $U$ intertwines $\tilde{\rho}$ and $\mathsf{X}\text{--}\Ind \tilde{\pi}$. If $a \in A$ and $x \otimes h \in X \otimes \Hil$, then
	\begin{align*}
		\tilde{\rho}(a) U(x \otimes h) &= \tilde{\rho}(a) (q(x) \otimes h) \\
			&= \rho(p^A(a))(q(x) \otimes h) \\
			&= p^A(a) \cdot q(x) \otimes h \\
			&= q(a \cdot x) \otimes h \\
			&= U(a \cdot x \otimes h) \\
			&= U \cdot \mathsf{X}\text{--}\Ind \tilde{\pi}(a) (x \otimes h).
	\end{align*}
	Thus $\tilde{\rho}(a) U = U (\mathsf{X}\text{--}\Ind \tilde{\pi}(a))$ for all $a \in A$. Consequently, $\ker (\mathsf{X}\text{--}\Ind \tilde{\pi}) = \ker \tilde{\rho} = I$. 
	Since $\ker \tilde{\pi} = J$, we can conclude that $I = \mathsf{X}\text{--}\Ind J$.
\end{proof}

\begin{thm}
\label{thm:exactsequences}
	Let $p_\B : \B \to G$ and $p_\D : \D \to G$ be Fell bundles over a groupoid $G$, and let $q : \E \to G$ be a $\B - \D$-equivalence over the trivial $G-G$-equivalence. 
	Suppose $J \subseteq C = \Gamma_0(\go, \D)$ is a $G$-invariant ideal, and let $I$ be the corresponding ideal in $\Gamma_0(\go, \B)$. Then the sequence
	\[
		0 \to C_r^*(G, \B_I) \to C_r^*(G, \B) \to C_r^*(G, \B^I) \to 0
	\]
	is exact if and only if
	\[
		0 \to C_r^*(G, \D_J) \to C_r^*(G, \D) \to C_r^*(G, \D^J) \to 0
	\]
	is exact.
\end{thm}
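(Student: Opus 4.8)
The plan is to recast exactness of each sequence as an equality of two ideals and then transport that equality across the Morita equivalence implemented by the reduced bimodule $\mathsf{X}_r$. First I would note that both sequences are automatically exact at their outer terms: the left-hand map is an isomorphism of $C_r^*(G,\B_I)$ onto an ideal of $C_r^*(G,\B)$ by Proposition \ref{prop:ideal} (and its analogue for $\D$), while the right-hand map $\rho^\B$ was already shown to be surjective. Since the fiberwise quotient maps kill $B_I(x)$, the composite $C_r^*(G,\B_I)\hookrightarrow C_r^*(G,\B)\xrightarrow{\rho^\B}C_r^*(G,\B^I)$ vanishes, so $C_r^*(G,\B_I)\subseteq\ker\rho^\B$ holds unconditionally. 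Hence the $\B$-sequence is exact if and only if $\ker\rho^\B=C_r^*(G,\B_I)$, and similarly the $\D$-sequence is exact if and only if $\ker\rho^\D=C_r^*(G,\D_J)$.

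Next I would produce two pairings of ideals under the Rieffel correspondence determined by $\mathsf{X}_r$. The first comes directly from the reduced case of Proposition \ref{prop:rieffel}, which pairs $C_r^*(G,\B_I)$ with $C_r^*(G,\D_J)$. For the second I would invoke Lemma \ref{lem:MElemma} with $A=C_r^*(G,\B)$, $B=C_r^*(G,\D)$, $\mathsf{X}=\mathsf{X}_r$, $\mathsf{Y}=\mathsf{X}^{I,J}$, and $q=\sigma$: the four intertwining identities required by the lemma are exactly those recorded in Proposition \ref{prop:sigma}, with $p^A=\rho^\B$ and $p^B=\rho^\D$. Because $\rho^\B$ and $\rho^\D$ are surjective, $C_r^*(G,\B^I)\cong A/\ker\rho^\B$ and $C_r^*(G,\D^J)\cong B/\ker\rho^\D$, so the lemma identifies $\ker\rho^\B$ and $\ker\rho^\D$ as a pair under the same Rieffel correspondence.

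Finally, since the Rieffel correspondence is a lattice isomorphism $\mathcal{I}(C_r^*(G,\D))\to\mathcal{I}(C_r^*(G,\B))$ and therefore injective, and since it carries $C_r^*(G,\D_J)\mapsto C_r^*(G,\B_I)$ and $\ker\rho^\D\mapsto\ker\rho^\B$, we obtain
\[
	C_r^*(G,\B_I)=\ker\rho^\B \quad\Longleftrightarrow\quad C_r^*(G,\D_J)=\ker\rho^\D.
\]
Combined with the reformulation of the first paragraph, this is precisely the claimed equivalence of exactness.

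Most of the substantive work has already been assembled in Propositions \ref{prop:rieffel} and \ref{prop:sigma} and in Lemma \ref{lem:MElemma}, so the theorem reduces to a short deduction. The step demanding the most care is the application of Lemma \ref{lem:MElemma}: one must verify that $\sigma$ is a map of \emph{reduced} imprimitivity bimodules (so that the inner-product identities of Proposition \ref{prop:sigma} genuinely hold at the reduced level) and that its intertwining relations are with the reduced quotient maps $\rho^\B,\rho^\D$, ensuring that the quotients $C_r^*(G,\B^I)$ and $C_r^*(G,\D^J)$ are correctly identified with $A/\ker\rho^\B$ and $B/\ker\rho^\D$ before the lemma can be applied.
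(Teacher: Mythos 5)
Your proposal is correct and follows essentially the same route as the paper: reformulate exactness of each sequence as the equality $\ker\rho = $ (ideal coming from the sub-bundle), use Proposition \ref{prop:rieffel} to pair $C_r^*(G,\B_I)$ with $C_r^*(G,\D_J)$, apply Lemma \ref{lem:MElemma} to $\sigma$ from Proposition \ref{prop:sigma} to pair the kernels, and conclude via injectivity of the Rieffel correspondence. Your explicit remarks on the automatic inclusion $C_r^*(G,\B_I)\subseteq\ker\rho^\B$ and on checking that $\sigma$ lives at the reduced level merely make precise what the paper leaves implicit.
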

\begin{proof}
	Let $\mathsf{X}$ denote the $C_r^*(G, \B)-C_r^*(G, \D)$-imprimitivity bimodule arising from $\E$. We have shown that there is a $\B_I - \D_J$-equivalence 
	$\qij : \Eij \to G$, and that the resulting $C_r^*(G, \B_I)-C_r^*(G, \D_J)$-imprimitivity bimodule $\mathsf{X}_{I,J}$ embeds naturally into $\mathsf{X}$. Thus 
	the Rieffel correspondence associated to $\mathsf{X}$ pairs the ideals $C_r^*(G, \B_I)$ and $C_r^*(G, \D_J)$. Likewise, we have a $\B^I - \D^J$-equivalence 
	$\qIJ : \EIJ \to G$, which yields a $C_r^*(G, \B^I) - C_r^*(G, \D^J)$-imprimitivity bimodule $\mathsf{X}^{I,J}$. Furthermore, the map 
	$\sigma : \mathsf{X} \to \mathsf{X}^{I,J}$ satisfies all the conditions of Lemma \ref{lem:MElemma}, so the kernels of the quotient maps $C_r^*(G, \B) \to C_r^*(G, \B^I)$ 
	and $C_r^*(G, \D) \to C_r^*(G, \D^J)$ are matched up by the Rieffel correspondence. Hence the sequence associated to $\B$ is exact if and only if the sequence
	arising from $\D$ is.
\end{proof}

The main application that we have had in mind all along is the following.

\begin{thm}
\label{thm:fellexact}
	Let $G$ be a second countable locally compact Hausdorff groupoid, and let $p : \B \to G$ be a Fell bundle over $G$. If $G$ is exact, then given any invariant ideal 
	$I \subseteq A = \Gamma_0(G, \B)$, the sequence
	\[
		0 \to C_r^*(G, \B_I) \to C_r^*(G, \B) \to C_r^*(G, \B^I) \to 0
	\]
	is exact.
\end{thm}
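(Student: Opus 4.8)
The plan is to use the stabilization theorem to transport the problem from $\B$ to a Fell bundle arising from a genuine groupoid dynamical system, where exactness of $G$ can be applied directly, and then to pull the conclusion back to $\B$ using Theorem \ref{thm:exactsequences}. First I would invoke \cite[Theorem 3.7]{IKSW} to obtain the canonical groupoid dynamical system $(\K(\V), G, \alpha)$ together with the associated Fell bundle $\D$ as in Example \ref{exmp:gpoidDS}, so that $C = \Gamma_0(\go, \K(\V)) = \K(V)$ is the unit $C^*$-algebra of $\D$ and $C_r^*(G, \D) \cong \K(\V) \rtimes_{\alpha, r} G$. The essential point is that the stabilization equivalence is realized by an upper semicontinuous Banach bundle $\E \to G$ which is a $\B-\D$-equivalence over the trivial $G-G$-equivalence $G$; this places us exactly within the hypotheses of Theorem \ref{thm:exactsequences}.

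Next I would transfer the given invariant ideal $I \subseteq A$ across the Morita equivalence. The restriction $\E\vert_\go$ makes $E = \Gamma_0(\go, \E)$ an $A-C$-imprimitivity bimodule with Rieffel correspondence $h : \I(C) \to \I(A)$, and I set $J = h^{-1}(I)$. I then need to check that $J$ is $G$-invariant. The computation $x \cdot P = h(x \cdot h^{-1}(P))$ established above shows that $h$ conjugates the $G$-action on $\Prim A$ to the $G$-action on $\Prim C$, so that equivalently $h^{-1}$ intertwines the two actions; together with $\hull(I) = h(\hull(J))$ this yields $\hull(J) = h^{-1}(\hull(I))$. Since $\hull(I)$ is $G$-invariant by hypothesis, so is $\hull(J)$, and hence $J$ is an invariant ideal of $C$ with $I = h(J)$.

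With $J$ in hand, the sequence for $\D$ is nothing more than a reduced crossed product sequence. Because $\D$ arises from $(\K(\V), G, \alpha)$, the sub- and quotient Fell bundles $\D_J$ and $\D^J$ correspond to the invariant sub- and quotient dynamical systems, so under the identifications $C_r^*(G, \D) \cong \K(\V) \rtimes_{\alpha, r} G$ (and the analogous ones for $\D_J$ and $\D^J$) the sequence
\[
	0 \to C_r^*(G, \D_J) \to C_r^*(G, \D) \to C_r^*(G, \D^J) \to 0
\]
becomes precisely the reduced crossed product sequence for the dynamical system $(\K(\V), G, \alpha)$ and the invariant ideal $J$. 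Since $G$ is exact, this sequence is exact by the very definition of exactness. Finally, Theorem \ref{thm:exactsequences} asserts that the sequence for $\B$ is exact if and only if the sequence for $\D$ is, so the desired sequence for $\B$ is exact.

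The main obstacle I anticipate is bookkeeping rather than analysis: one must confirm that the stabilization equivalence of \cite{IKSW} is genuinely realized over the trivial $G-G$-equivalence $G$, so that Theorem \ref{thm:exactsequences} applies verbatim, and one must keep the left/right conventions for the $A-C$-imprimitivity bimodule $E$ straight so that the Rieffel correspondence $h$ and the identification $I = h(J)$ line up with the hypotheses of that theorem. Once these compatibilities are pinned down, every remaining step is a direct appeal to a result already established above.
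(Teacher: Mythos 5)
Your proposal is correct and follows essentially the same route as the paper: invoke the stabilization theorem to obtain the equivalence between $\B$ and the Fell bundle of $(\K(\V), G, \alpha)$ over the trivial $G-G$-equivalence, transfer the ideal across the Rieffel correspondence, identify the second sequence with the reduced crossed product sequence, and apply Theorem \ref{thm:exactsequences}. Your explicit check that $J = h^{-1}(I)$ is $G$-invariant (the paper's invariance proposition is stated in the direction from $J$ to $I$, and the converse is what is needed here) is a worthwhile detail that the paper's own one-line ``let $J$ be the corresponding ideal'' glosses over.
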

\begin{proof}
	Let $(\K(\V), G, \alpha)$ be the groupoid dynamical system associated to $\B$ under the stabilization theorem of \cite{IKSW}. Then the Fell bundle arising from 
	$(\K(\V), G, \alpha)$ is equivalent to $\B$. Let $J \subseteq \K(V)$ be the ideal corresponding to $I$. Then our main theorem tells us that
	\[
		0 \to C_r^*(G, \B_I) \to C_r^*(G, \B) \to C_r^*(G, \B^I) \to 0
	\]
	is exact if and only if
	\[
		0 \to \mathcal{J} \rtimes_{\alpha_J, r} G \to \K(\V) \rtimes_{\alpha, r} G \to \K(\V)/\mathcal{J} \rtimes_{\alpha^J, r} G \to 0
	\]
	is exact. But the latter sequence is exact since $G$ is an exact groupoid.
\end{proof}

By specializing to ideals coming from open invariant subsets of $\go$, we have the following analogue of \cite[Lemma 9]{aidan-dana} as a special case of Theorem 
\ref{thm:fellexact}.

\begin{cor}
\label{cor:fellinnerexact}
	Suppose $G$ is an exact groupoid, and let $p: \B \to G$ be a separable Fell bundle over $G$. Suppose $U \subseteq \go$ is open and invariant, and put
	$F = \go \backslash U$. Then the sequence
	\[
		0 \to C_r^*(G\vert_U, \B) \to C_r^*(G, \B) \to C_r^*(G\vert_F, \B) \to 0
	\]
	is exact.
\end{cor}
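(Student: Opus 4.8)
The plan is to deduce this from Theorem \ref{thm:fellexact} by exhibiting the correct invariant ideal. The candidate is the ideal $I = \Gamma_0(U, \B\vert_U)$ of $A = \Gamma_0(\go, \B)$ consisting of those sections that vanish on $F$; equivalently, $I = C_0(U) \cdot A$ is the ideal determined by the open set $U$ under the $C_0(\go)$-algebra structure on $A$. Everything then reduces to checking that $I$ is invariant and that the associated ideal and quotient Fell bundles are the expected restrictions of $\B$.

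First I would verify that $I$ is invariant. Using the identification $\Prim A \cong \bigsqcup_{u \in \go} \Prim A(u)$, the sections in $I$ have fiber $A(u)$ over $u \in U$ and fiber $0$ over $u \in F$, so a primitive ideal $(u, P)$ contains $I$ exactly when $u \in F$. Hence $\hull(I) = \bigsqcup_{u \in F} \Prim A(u)$. Since the $G$-action on $\Prim A$ covers the action on $\go$ (namely $x \cdot (s(x), P) = (r(x), h_x(P))$) and $F = \go \setminus U$ is invariant because $U$ is, this set is $G$-invariant. Thus $I$ is an invariant ideal, and Theorem \ref{thm:fellexact} applies to it.

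The substantive point is to identify $\B_I$ and $\B^I$ with restrictions of $\B$. Recall that $B_I(x) = I(r(x)) \cdot B(x)$; since $I(u)$ is all of $A(u)$ for $u \in U$ and $0$ for $u \in F$, and $B(x)$ is a nondegenerate imprimitivity bimodule, we get $B_I(x) = B(x)$ when $r(x) \in U$ and $B_I(x) = 0$ when $r(x) \in F$. Invariance of $U$ forces $r(x) \in U \iff s(x) \in U \iff x \in G\vert_U$, so $\B_I$ is the extension by zero of $\B\vert_{G\vert_U}$; dually, $\B^I$ (with $B^I(x) = B(x)/B_I(x)$) is the extension by zero of $\B\vert_{G\vert_F}$. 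Under these identifications $\Gamma_c(G, \B_I) = \Gamma_c(G\vert_U, \B)$ and $\Gamma_c(G, \B^I) = \Gamma_c(G\vert_F, \B)$ as $*$-algebras. I would then argue, exactly as in the proof of Proposition \ref{prop:ideal}, that these identifications are isometric for the reduced norms: both reduced norms are built from a faithful representation of the common unit $C^*$-algebra ($I$ for $\B_I$ and for $\B\vert_{G\vert_U}$; $A/I \cong \Gamma_0(F, \B\vert_F)$ for $\B^I$ and for $\B\vert_{G\vert_F}$), and the Haar-system integrals defining the Hilbert-module inner products and the convolution only involve fibers over the relevant reduction, because $\lambda_u$ is supported on $s^{-1}(u)$ and invariance makes the integrands vanish elsewhere. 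This yields $C_r^*(G, \B_I) \cong C_r^*(G\vert_U, \B)$ and $C_r^*(G, \B^I) \cong C_r^*(G\vert_F, \B)$.

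The main obstacle I anticipate is precisely these reduced-norm identifications: the full-algebra versions are routine, but matching the reduced norms requires tracking the induced representations carefully. For the quotient bundle $\B^I$ there is an extra wrinkle, since its topology is defined via the quotient sections $\sigma(f)$; one must check that every element of $\Gamma_c(G\vert_F, \B)$ arises, after completion, from such sections, which I expect to handle by an inductive-limit-topology density argument of the kind used in \cite{dana-marius} and in Propositions \ref{prop:ILT} and \ref{prop:sigma}. With the identifications in hand, substituting them into the exact sequence furnished by Theorem \ref{thm:fellexact} produces the claimed sequence and completes the proof.
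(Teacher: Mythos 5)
Your proposal is correct and follows essentially the same route as the paper: the paper takes the same ideal $I$ of sections vanishing on $F$, cites \cite{aidan-dana} for its invariance and for the identifications $\B_I \cong \B\vert_{G\vert_U}$ and $\B^I \cong \B\vert_{G\vert_F}$, and then applies Theorem \ref{thm:fellexact}. The only difference is that you spell out the invariance check and the reduced-norm identifications that the paper leaves to the citation.
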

\begin{proof}
	Let $A = \Gamma_0(\go, \B)$ be the unit $C^*$-algebra of $\B$. It is observed in \cite{aidan-dana} that
	\[
		I = \{ a \in A : a(u) = 0 \text{ for all } u \in F \}
	\]
	is a $G$-invariant ideal of $A$. Moreover, the associated Fell bundle $\B_I$ can be identified with $\B \vert_{G \vert_U}$, and likewise for $\B^I$ and $\B \vert_{G \vert_F}$.
	Since $G$ is exact, the result now follows from Theorem \ref{thm:fellexact}.
\end{proof}

We also obtain a reduced version of \cite[Corollary 10]{aidan-dana}, which will be useful in the next section.

\begin{cor}
\label{cor:orbitfibers}
	Suppose $G$ is an exact groupoid, and let $p: \B \to G$ be a separable Fell bundle over $G$. If the orbit space $G \backslash \go$ is Hausdorff, then $C_r^*(G, \B)$
	is a $C_0(G \backslash \go)$-algebra with fibers
	\[
		C_r^*(G, \B)_{[u]} = C_r^*(G \vert_{[u]}, \B)
	\]
	for each $u \in \go$.
\end{cor}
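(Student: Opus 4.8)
The plan is to exhibit the $C_0(G\backslash\go)$-algebra structure explicitly and then identify the fibers using the inner exactness result of Corollary \ref{cor:fellinnerexact}; this mirrors the proof of \cite[Corollary 10]{aidan-dana}, but at the level of reduced $C^*$-algebras.

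First I would set up the $C_0(G\backslash\go)$-action. Since $G\backslash\go$ is Hausdorff, the orbit map $\pi : \go \to G\backslash\go$ is continuous and open, so pullback embeds $C_0(G\backslash\go)$ into $C_b(\go)$ as the algebra of $G$-invariant bounded continuous functions. Because $A = \Gamma_0(\go, \B)$ is a $C_0(\go)$-algebra, these invariant functions act on $A$, and hence on $C_r^*(G, \B)$, as multipliers via $(\varphi \cdot f)(x) = \varphi(r(x)) f(x)$ for $f \in \Gamma_c(G, \B)$. Invariance gives $\varphi(r(x)) = \varphi(s(x))$, so this multiplier is central and agrees with the corresponding right action; a short check confirms it commutes with convolution. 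For nondegeneracy, given $f \in \Gamma_c(G, \B)$ the set $\pi(r(\supp f))$ is compact in $G\backslash\go$, so I can choose $\varphi \in C_0(G\backslash\go)$ equal to $1$ there, whence $\varphi \cdot f = f$; density of $\Gamma_c(G, \B)$ then yields a nondegenerate homomorphism of $C_0(G\backslash\go)$ into the center of the multiplier algebra $M(C_r^*(G, \B))$. This makes $C_r^*(G, \B)$ a $C_0(G\backslash\go)$-algebra.

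Next, fix $u \in \go$ and write $F = [u]$ for the orbit of $u$ in $\go$ and $U = \go \backslash F$. Because $G\backslash\go$ is Hausdorff the point $[u]$ is closed, so $F$ is a closed invariant subset and $U$ is open and invariant. By the definition of the fibers of a $C_0(X)$-algebra, $C_r^*(G, \B)_{[u]}$ is the quotient of $C_r^*(G, \B)$ by the ideal $J_{[u]} = \overline{\spa}\{\varphi \cdot a : \varphi \in C_0(G\backslash\go),\ \varphi([u]) = 0,\ a \in C_r^*(G, \B)\}$. The crux is to identify $J_{[u]}$ with the ideal $C_r^*(G\vert_U, \B)$ appearing in Corollary \ref{cor:fellinnerexact}. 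One inclusion is easy: if $\varphi$ vanishes at $[u]$ then, being invariant, it vanishes on all of $F$, so for $f \in \Gamma_c(G, \B)$ the section $\varphi \cdot f$ is supported on $r^{-1}(U) = G\vert_U$, giving $J_{[u]} \subseteq C_r^*(G\vert_U, \B)$. For the reverse inclusion I would take $g \in \Gamma_c(G\vert_U, \B)$ and note that $\pi(r(\supp g))$ is a compact subset of $G\backslash\go \backslash \{[u]\}$; a Urysohn function $\varphi \in C_0(G\backslash\go)$ with $\varphi([u]) = 0$ and $\varphi \equiv 1$ on this set satisfies $\varphi \cdot g = g$, so $g \in J_{[u]}$, and density finishes the inclusion.

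Finally, with the identification $J_{[u]} = C_r^*(G\vert_U, \B)$ in hand, Corollary \ref{cor:fellinnerexact}, applied to the open invariant set $U$ with complement $F = [u]$ (which is where exactness of $G$ is used), supplies the exact sequence $0 \to C_r^*(G\vert_U, \B) \to C_r^*(G, \B) \to C_r^*(G\vert_F, \B) \to 0$. Hence $C_r^*(G, \B)_{[u]} = C_r^*(G, \B)/J_{[u]} \cong C_r^*(G\vert_{[u]}, \B)$, as claimed. The main obstacle is precisely the identification of $J_{[u]}$ with $C_r^*(G\vert_U, \B)$: it hinges on the Urysohn/partition-of-unity argument on the Hausdorff orbit space, together with the observation that invariant functions vanishing at $[u]$ cut sections down exactly to those supported over $U$.
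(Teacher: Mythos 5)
Your proof is correct and follows the same skeleton as the paper's: establish the $C_0(G\backslash\go)$-structure, identify the distinguished ideal $J_{[u]}$ with $C_r^*(G\vert_U,\B)$, and conclude via Corollary \ref{cor:fellinnerexact}. The one place you diverge is the first step: the paper gets the $C_0(G\backslash\go)$-structure for free by noting that $C_r^*(G,\B)$ is a quotient of $C^*(G,\B)$, which is already a $C_0(G\backslash\go)$-algebra by \cite[Corollary 10]{aidan-dana}, and that quotients of $C_0(X)$-algebras are $C_0(X)$-algebras \cite[Lemma 1.3]{dana-marius}; you instead build the central nondegenerate action of $C_0(G\backslash\go)$ by hand. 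Your route costs a little more (in particular you should verify that multiplication by an invariant function is bounded for the \emph{reduced} norm, e.g.\ by checking it acts adjointably on the Hilbert module $\mathsf{X}$, or simply by descending through the quotient as the paper does), but it has the virtue of making explicit the identification of $J_{[u]}$ with $C_r^*(G\vert_U,\B)$, which the paper asserts without proof. One small point of care in your forward inclusion: for $\varphi$ merely vanishing at $[u]$, the section $\varphi\cdot f$ need not lie in $\Gamma_c(G\vert_U,\B)$ (its support may touch $G\vert_F$), so one should first approximate $\varphi$ by functions vanishing on a neighborhood of $[u]$ and then pass to closures; this is routine given that $G\backslash\go$ is locally compact Hausdorff.
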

\begin{proof}
	Notice first that $C_r^*(G, \B)$ is a quotient of $C^*(G, \B)$, which is a $C_0(G \backslash \go)$-algebra by \cite[Corollary 10]{aidan-dana}. Hence $C_r^*(G, \B)$
	is itself a $C_0(G \backslash \go)$-algebra by \cite[Lemma 1.3]{dana-marius}.

	Now let $u \in \go$. Since $G \backslash \go$ is Hausdorff, the orbit $[u]$ is closed, so $U = \go \backslash [u]$ is open and invariant. The fiber $C_r^*(G, \B)_{[u]}$ 
	is the quotient of $C_r^*(G, \B)$ by the ideal
	\[
		J_{[u]} = \overline{\spa} \bigl\{ \varphi \cdot a : \varphi \in C_0(G \backslash \go), \, \varphi([u]) = 0, \text{ and } a \in C_r^*(G, \B) \bigr\},
	\]
	which we can identify with $C_r^*(G \vert_U, \B)$. We then have $C_r^*(G, \B)/J_{[u]} = C_r^*(G \vert_{[u]}, \B)$ by Corollary \ref{cor:fellinnerexact}, since $G$
	is exact.
\end{proof}

\begin{rem}
	Note that in order to use the stabilization theorem, it was necessary only to work with equivalent Fell bundles over a common groupoid $G$. It is likely that 
	one can extend Theorem \ref{thm:exactsequences} to a result for equivalent Fell bundles $\B \to G$ and $\D \to H$ over \emph{different} groupoids (indeed, 
	Theorem 3.3 of \cite{lalondeexact} is a special case), though one likely needs to consider a more refined notion of Morita equivalence. The proof we have 
	given here breaks down at the very first step in general, as illustrated by a fairly simple example.
	
	Let $G$ and $H$ be groupoids, and suppose $Z$ is a $G-H$-equivalence. Let $\B = G \times \C$ and $\D = H \times \C$ denote the trivial line
	bundles over $G$ and $H$, respectively. Then $C^*(G, \B) = C^*(G)$ and $C^*(H, \D) = C^*(H)$, and likewise for the reduced algebras.
	If we let $\E = Z \times \C$, then $\E$ Is a $\B-\D$-equivalence (see Example 5.10 of \cite{mw08}, for example). However, notice that
	$A = C_0(\go)$ and $C = C_0(\ho)$, which are Morita equivalent if and only if $\go$ and $\ho$ are homeomorphic. Thus the unit $C^*$-algebras need not be Morita 
	equivalent in general.
\end{rem}

\section{An Application to Groupoid Extensions} 
\label{sec:extensions}
As an application of our main result from the last section, we show that any extension of an exact groupoid by an exact groupoid is again exact. Aside from being
interesting in its own right, this theorem provides a significant strengthening of the exactness results for twisted crossed products in Section \ref{sec:nuclear}. The 
proof requires us to first adapt a recent construction involving iterated Fell bundle $C^*$-algebras, due to Buss and Meyer, to the reduced setting. In this sense, 
our argument is in the same spirit as the original proof for groups by Kirchberg and Wassermann, which involved some delicate manipulation of iterated twisted 
reduced crossed products. It is worth noting that there are already some partial results in this direction, namely \cite[Theorem 3.4]{DKR} (for certain Fell bundles 
over \'{e}tale groupoids) and \cite[Theorem 3.8]{lalonde2017} (for dynamical systems associated to transformation groupoids). 

Suppose we have an extension of locally compact Hausdorff groupoids:
\[
	\xymatrix{\go \ar[r] & S \ar[r]^i & E \ar[r]^j & G \ar[r] & \go}.
\]
Here we require that $i$ is a homeomorphism onto a closed subgroupoid of $E$ and $j$ is a continuous open surjection. It is implicit that $S^{{(0)}} = E^{{(0)}} = \go$ 
and that $i(S) \subseteq \iso(E)$, so $S$ is necessarily a group bundle. (Unlike the extensions in Section \ref{sec:nuclear}, we do not assume the groups are abelian.) 
We also assume that $S$ and $G$ are equipped with Haar systems $\{\mu^u\}_{u \in S^{(0)}}$ and $\{\lambda^u\}_{u \in \go}$, respectively. It follows from 
\cite[Theorem 5.1]{BM} that $E$ can be endowed with a Haar system $\{\nu^u\}_{u \in E^{(0)}}$ characterized by
\begin{equation}
\label{eq:haar}
	\int_E f(e) \, d\nu^u(e) = \int_G \int_S f(e' g) \, d\mu^{s(e')}(g) \, d\lambda^u(j(e'))
\end{equation}
for $f \in C_c(E)$, where $e' \in E$ is any element satisfying $r(e') = u$. We will always assume that $E$ is equipped with this Haar system. Note that \eqref{eq:haar}
is a direct generalization of the natural Haar system on a twist, as defined in \cite{mw95}.

Given a Fell bundle $p : \B \to E$, Buss and Meyer \cite{BM} showed how to decompose $C^*(E, \B)$ as an ``iterated crossed product'' by producing a Fell bundle $\Cgh$ 
over $G$ with $\Gamma_0(\go, \Cgh) = C^*(S, \B \vert_S)$ and $C^*(G, \Cgh) \cong C^*(E, \B)$. This result can be thought of as a far-reaching generalization of classical 
theorems (such as \cite[Proposition 7.28]{TFB2}) for decomposing crossed products by groups into iterated twisted crossed products. We first present an outline of this 
construction, and then we show how it can be adapted (under certain circumstances) to reduced Fell bundle $C^*$-algebras. Notice first that $C^*(S, \B \vert_S)$ is a 
$C_0(\go)$-algebra with fibers
\[
	C^*(S, \B \vert_S)_u = C^*(S_u, \B \vert_u)
\]
for all $u \in \go$, by Corollary 10 of \cite{aidan-dana}. Thus we set $\Cgh_u = C^*(S_u, \B \vert_u)$ for each $u \in \go$. Next, for each $x \in G$ the set
$E_x = j^{-1}(x)$ is a $S_{r(x)}-S_{s(x)}$-equivalence, and $\B \vert_{E_x}$ implements an equivalence between the Fell bundles $\B \vert_{S_{r(x)}}$ and 
$\B \vert_{S_{s(x)}}$. Consequently $\Gamma_c(E_x, \B \vert_{E_x})$ completes to a $C^*(S_{r(x)}, \B \vert_{S_{r(x)}}) - C^*(S_{s(x)}, \B \vert_{S_{s(x)}})$-imprimitivity 
bimodule, which we call $\Cgh_x$. Now given $f \in \Gamma_c(E_x, \B \vert_{E_x})$ and $g \in \Gamma_c(E_y, \B \vert_{E_y})$ with $(x, y) \in \gtwo$, we 
define $f*g \in \Gamma_c(E_{x y}, \B \vert_{E_{x y}})$ by
\[
	f*g(e) = \int_G f(e' y) d\lambda^{s(e')}(y)
\]
for any $e' \in E$ with $j(e') = x$. Moreover, this multiplication map is bilinear and extends to the completions $\Cgh_x$ and $\Cgh_y$. We can also define an involution 
as follows: given $f \in \Gamma_c(E_x, \B \vert_{E_x})$, define $f^* \in \Gamma_c(E_{x^{-1}}, \B \vert_{E_{x^{-1}}})$ by
\[
	f^*(e) = \bigl( f(e^{-1}) \bigr)^*.
\]
Again, this definition extends to the completion $\Cgh_x$. Finally, $\Cgh$ can be equipped with a topology that makes it into the total space of a Fell bundle
over $G$. The map $\xi \mapsto \tilde{\xi}$ from $\Gamma_c(E, \B) \to \Gamma_c(G, \Cgh)$, where
\[
	\tilde{\xi}(x) = \xi \vert_{E_x},
\]
extends to an isomorphism of $C^*(E, \B)$ onto $C^*(G, \Cgh)$ \cite[Theorem 6.2]{BM}.

Now we set about producing a reduced version of the Buss-Meyer construction. We will attempt to do so in a way that allows us to avoid rechecking all the details in the 
reduced setting, and whose proofs are in the same spirit as some of the others in this paper. 

\begin{prop}
	The reduced Fell bundle $C^*$-algebra $C^*_r(S, \B \vert_S)$ is a $C_0(\go)$-algebra. Moreover, if $S$ is exact then
	\[
		C_r^*(S, \B \vert_S)_u = C_r^*(S_u, \B \vert_{S_u})
	\]
	for all $u \in \go$.
\end{prop}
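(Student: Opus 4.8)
The plan is to obtain both assertions by specializing earlier results to the group bundle $S$, keeping careful track of which part actually requires exactness. The crucial elementary observation is that since $S$ is a group bundle, the range and source maps agree on $S$; hence each orbit of $S$ acting on its unit space is the singleton $\{u\}$, so the orbit space $S \backslash \go$ coincides with $\go$, which is automatically Hausdorff under our standing assumptions.

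For the first assertion, which does not invoke exactness, I would mimic the opening paragraph of the proof of Corollary \ref{cor:orbitfibers}. The full Fell bundle $C^*$-algebra $C^*(S, \B \vert_S)$ is a $C_0(\go)$-algebra by \cite[Corollary 10]{aidan-dana}, as already recorded in the Buss--Meyer outline above. Since $C_r^*(S, \B \vert_S)$ is a quotient of $C^*(S, \B \vert_S)$, it is itself a $C_0(\go)$-algebra by \cite[Lemma 1.3]{dana-marius}.

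For the identification of the fibers I would apply Corollary \ref{cor:orbitfibers} directly, with the groupoid $G$ there replaced by the exact groupoid $S$ and the Fell bundle taken to be $\B \vert_S$. All the hypotheses hold: $S$ satisfies the standing assumptions and carries a Haar system, $\B \vert_S$ is separable, the orbit space $S \backslash \go = \go$ is Hausdorff, and $S$ is exact by hypothesis. The corollary then yields $C_r^*(S, \B \vert_S)_{[u]} = C_r^*(S \vert_{[u]}, \B \vert_S)$; since $[u] = \{u\}$, the restriction $S \vert_{[u]}$ is precisely the isotropy group $S_u$, and $\B \vert_S$ further restricted to $S_u$ is $\B \vert_{S_u}$, giving $C_r^*(S, \B \vert_S)_u = C_r^*(S_u, \B \vert_{S_u})$.

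The main point to be careful about is exactly this decoupling: the $C_0(\go)$-algebra structure must be established through the quotient argument so that it holds unconditionally, whereas the fiber formula genuinely relies on exactness via the inner-exactness encoded in Corollary \ref{cor:orbitfibers}. Beyond that, the only things to verify are the routine orbit-space computation $S \backslash \go = \go$ and the identification $S \vert_{\{u\}} = S_u$, both of which are immediate for a group bundle.
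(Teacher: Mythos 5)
Your proof is correct and follows essentially the same route as the paper: both reduce to the observation that the orbit space of a group bundle is $\go$ and then invoke Corollary \ref{cor:orbitfibers} (whose first paragraph gives the $C_0(\go)$-structure without exactness, and whose fiber identification uses exactness via Corollary \ref{cor:fellinnerexact}). Your explicit decoupling of which assertion needs the exactness hypothesis is a slightly more careful reading than the paper's one-line appeal to the two corollaries, but the substance is identical.
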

\begin{proof}
	Since $S$ is a group bundle, its orbit space is precisely $S^{(0)} = \go$. The result now follows immediately from Corollaries \ref{cor:fellinnerexact} and 
	\ref{cor:orbitfibers}.
\end{proof}

Now let $q : \Cgh \to G$ be the Fell bundle over $G$ with $\Gamma_0(\go, \Cgh) = C^*(S, \B \vert_S)$ and $C^*(G, \Cgh) \cong C^*(E, \B)$, \`{a} la Buss and Meyer.
We will build our reduced iterated Fell bundle essentially by taking a quotient of this Fell bundle. To get us started, let $I$ denote the kernel of the natural quotient map 
$C^*(S, \B \vert_S) \to C_r^*(S, \B \vert_S)$. Then $I$ is a $C_0(\go)$-algebra, and it is not hard to see that $I(u)$ is the kernel of the quotient map 
$C^*(S_u, \B) \to C_r^*(S_u, \B)$ for each $u \in \go$. Indeed, the quotient map $\kappa : C^*(S, \B \vert_S) \to C_r^*(S, \B \vert_S)$ is clearly $C_0(\go)$-linear,
so it induces surjective homomorphisms $\kappa_u : C^*(S_u, \B) \to C_r^*(S_u, \B)$ for each $u \in \go$. The diagram
\[
	\xymatrix{
		C^*(S, \B \vert_S) \ar[r]^\kappa \ar[d] & C_r^*(S, \B \vert_S) \ar[d] \\
		C^*(S_u, \B) \ar[r]^{\kappa_u} & C_r^*(S_u, \B)
	}
\]
commutes, and it follows that $I_u = \ker \kappa_u$.

\begin{prop}
	If we view $C^*(S, \B \vert_S)$ as the unit $C^*$-algebra of the Fell bundle $q : \Cgh \to G$, then $I = \ker \kappa$ is a $G$-invariant ideal.
\end{prop}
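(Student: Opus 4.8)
The plan is to unwind the definition of invariance and reduce it to a statement about the fibers of $\Cgh$. The $G$-action on $\Prim C$, where $C = \Gamma_0(\go, \Cgh) = C^*(S, \B\vert_S)$, is defined through the Rieffel correspondences induced by the fibers of $\Cgh$: each $\Cgh_x$ is a $C^*(S_{r(x)}, \B) - C^*(S_{s(x)}, \B)$-imprimitivity bimodule, and the resulting homeomorphism $\Prim C(s(x)) \to \Prim C(r(x))$ defines the action. Since this homeomorphism descends from a lattice isomorphism $h_x : \I(C(s(x))) \to \I(C(r(x)))$ that carries hulls to hulls, proving that $\hull(I)$ is $G$-invariant amounts to showing $h_x(I(s(x))) = I(r(x))$ for every $x \in G$. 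As established just above, $I(u) = \ker \kappa_u$ is the kernel of the full-to-reduced quotient map for the group Fell bundle $\B\vert_{S_u}$, so the task becomes showing that the Rieffel correspondence induced by $\Cgh_x$ pairs the full-to-reduced kernels on $C^*(S_{r(x)}, \B)$ and $C^*(S_{s(x)}, \B)$.

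The key point is that $\Cgh_x$ is exactly the full imprimitivity bimodule coming from a Fell bundle equivalence. In the Buss--Meyer construction $E_x = j^{-1}(x)$ is an $S_{r(x)} - S_{s(x)}$-equivalence, and $\B\vert_{E_x}$ implements an equivalence of the group Fell bundles $\B\vert_{S_{r(x)}}$ and $\B\vert_{S_{s(x)}}$, with $\Cgh_x$ the completion of $\Gamma_c(E_x, \B\vert_{E_x})$ in the full norm. I would then invoke the reduced equivalence machinery of Sims and Williams, in the form spelled out before Proposition \ref{prop:bimoduleiso}: the same equivalence $\B\vert_{E_x}$ also produces a $C_r^*(S_{r(x)}, \B) - C_r^*(S_{s(x)}, \B)$-imprimitivity bimodule $\mathsf{Y}_x$, and $\mathsf{Y}_x$ is obtained from $\Cgh_x$ by passing to the quotient by the closed submodule corresponding to the kernels of $\kappa_{r(x)}$ and $\kappa_{s(x)}$.

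With this identification in place, the conclusion follows from Lemma \ref{lem:MElemma}. Taking $A = C^*(S_{r(x)}, \B)$ and $B = C^*(S_{s(x)}, \B)$ with ideals $I(r(x)) = \ker\kappa_{r(x)}$ and $I(s(x)) = \ker\kappa_{s(x)}$, setting $\mathsf{X} = \Cgh_x$ and $\mathsf{Y} = \mathsf{Y}_x$, the reduction map $\Cgh_x \to \mathsf{Y}_x$ is the required surjection, and its compatibility with the module actions and inner products (modulo the kernels) furnishes the hypotheses of the lemma. Hence $I(r(x))$ and $I(s(x))$ are paired under the Rieffel correspondence of $\Cgh_x$, i.e. $h_x(I(s(x))) = I(r(x))$. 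Since $x \in G$ was arbitrary, $\hull(I)$ is $G$-invariant, so $I$ is invariant.

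The main obstacle is the middle step: confirming that the reduced imprimitivity bimodule $\mathsf{Y}_x$ attached to the group equivalence $E_x$ really is the quotient of the Buss--Meyer fiber $\Cgh_x$ by the submodule coming from the full-to-reduced kernels. This is precisely the compatibility worked out in the discussion preceding Proposition \ref{prop:bimoduleiso}, but here it must be applied fiberwise to the family $\{E_x\}_{x \in G}$, and one should check that the full bimodule $\Cgh_x$ produced by Buss and Meyer coincides with the full Muhly--Williams/Sims--Williams imprimitivity bimodule for that equivalence, so that the quotient description genuinely applies.
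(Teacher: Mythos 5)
Your proposal is correct and follows essentially the same route as the paper: the paper's proof likewise observes that $\B\vert_{E_x}$ implements an equivalence between $\B\vert_{S_{r(x)}}$ and $\B\vert_{S_{s(x)}}$ and then cites the reduced equivalence theorem of Sims and Williams to conclude that the full-to-reduced kernels $I(s(x))$ and $I(r(x))$ are paired under the Rieffel correspondence of $\Cgh_x$. Your version merely makes explicit (via the compatibility discussion before Proposition \ref{prop:bimoduleiso} and Lemma \ref{lem:MElemma}) the step the paper dispatches with ``clearly matched,'' which is a reasonable amount of extra care but not a different argument.
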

\begin{proof}
	Suppose $P \in \hull(I)$, and let $x \in G$. Recall that $\B \vert_{S_{s(x)}}$ and $\B \vert_{S_{r(x)}}$ are equivalent via $\B \vert_{E_x}$, hence $C_r^*(S_{s(x)}, \B)$ 
	and $C_r^*(S_{r(x)}, \B)$ are Morita equivalent. Then $I(s(x))$ and $I(r(x))$ are clearly matched under the Rieffel correspondence $h_x$ as a consequence of the 
	equivalence theorem for reduced $C^*$-algebras \cite{sims-williams2013}. Hence $P \supseteq I(s(x))$ if and only if $h_x(P) \supseteq I(r(x))$, so $I$ is invariant.
\end{proof}

Since $I$ is a $G$-invariant ideal, there is a Fell bundle $\Cgh^I$ over $G$ associated to the quotient $C_r^*(S, \B \vert_S) = C^*(S, \B \vert_S) / I$ by 
\cite[Proposition 3.4]{dana-marius}. That is, $\Gamma_0(\go, \Cgh^I) = C_r^*(S, \B \vert_S)$. We aim to show that the isomorphism $C^*(G, \Cgh) \cong C^*(E, \B)$
of Buss and Meyer descends to an isomorphism $C_r^*(G, \Cgh^I) \cong C_r^*(E, \B)$. In other words, we claim that the map $\Gamma_c(E, \B) \to \Gamma_c(G, \Cgh^I)$
given by $\xi \mapsto \tilde{\xi}$, where
\[
	\tilde{\xi}(x)(e) = \xi(e),
\]
is isometric with respect to the reduced norms. We do so by carefully manipulating induced representations. 

Begin with a faithful representation $\pi$ of $A = \Gamma_0(\go, \B)$ on a Hilbert space $\Hil$. The
associated induced representation $\Ind_{S^{\scriptscriptstyle{(0)}}}^S \pi$ of $C_r^*(S, \B \vert_S)$ is faithful and acts on the Hilbert space 
$\mathsf{X} = \overline{\Gamma_c(S, \B \vert_S) \odot \Hil}$. Recalling that $C_r^*(S, \B \vert_S) = \Gamma_0(G, \Cgh^I)$, we can now induce up to $C_r^*(G, \Cgh^I)$
to obtain a faithful representation $\Ind_{\go}^G (\Ind_{S^{\scriptscriptstyle{(0)}}}^S \pi)$ on $\overline{\Gamma_c(G, \Cgh^I) \odot \mathsf{X}}$. On the other hand,
we can invoke \cite[Theorem 2.4]{IW} and build a faithful representation of $C_r^*(E, \B)$ using induction in stages. That is, we form the representation
$\Ind_S^E (\Ind_{S^{\scriptscriptstyle{(0)}}}^S \pi)$ of $C_r^*(E, \B)$ on the completion of $\Gamma_c(E, \B) \odot \mathsf{X}$. We claim that there is a unitary
$U : \overline{\Gamma_c(E, \B) \odot \mathsf{X}} \to \overline{\Gamma_c(G, \Cgh^I) \odot \mathsf{X}}$ that implements the desired isomorphism between $C_r^*(E, \B)$
and $C_r^*(G, \Cgh^I)$.

\begin{prop}
	The map 
	\[
		U_0 : \Gamma_c(E, \B) \odot \Gamma_c(S, \B \vert_S) \odot \Hil \to \Gamma_c(G, \Cgh^I) \odot \Gamma_c(S, \B \vert_S) \odot \Hil
	\]
	given by
	\[
		U_0(\xi \otimes \sigma \otimes h) = \tilde{\xi} \otimes \sigma \otimes h
	\]
	extends to a unitary $U : \overline{\Gamma_c(E, \B) \odot \mathsf{X}} \to \overline{\Gamma_c(G, \Cgh^I) \odot \mathsf{X}}$, which spatially implements an
	isomorphism between $C_r^*(E, \B)$ and $C_r^*(G, \Cgh^I)$.
\end{prop}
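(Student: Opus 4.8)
Write $\rho = \Ind_{S^{(0)}}^S\pi$ for the faithful representation of $C_r^*(S,\B\vert_S)$ on $\mathsf{X}$. The plan is to show that $U_0$ preserves inner products and has dense range, so that it extends to a unitary $U$, and then that $U$ intertwines the two induced representations. Since $\Ind_S^E(\rho)$ is a faithful representation of $C_r^*(E,\B)$ (by induction in stages, \cite[Theorem 2.4]{IW}) and $\Ind_{\go}^G(\rho)$ is a faithful representation of $C_r^*(G,\Cgh^I)$, such a $U$ immediately yields $\norm{\xi}_{C_r^*(E,\B)} = \norm{\tilde\xi}_{C_r^*(G,\Cgh^I)}$ for every $\xi \in \Gamma_c(E,\B)$, which is exactly what is needed to conclude that $\xi \mapsto \tilde\xi$ extends to the asserted isomorphism, spatially implemented by $U$.

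The heart of the matter is showing $U_0$ is isometric on elementary tensors. On $\overline{\Gamma_c(E,\B)\odot\mathsf{X}}$ the inner product is $\ip{\xi\otimes v}{\eta\otimes w} = \ip{\rho(\hipp{\eta}{\xi}_{E,S})v}{w}$, where $\hipp{\eta}{\xi}_{E,S} \in C_r^*(S,\B\vert_S)$ is the inner product of the induction-in-stages module structure on $\Gamma_c(E,\B)$; the formula on $\overline{\Gamma_c(G,\Cgh^I)\odot\mathsf{X}}$ is identical with $\hipp{\tilde\eta}{\tilde\xi}_{\Cgh^I}$, the $\Gamma_0(\go,\Cgh^I)$-valued inner product. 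It therefore suffices to prove $\hipp{\tilde\eta}{\tilde\xi}_{\Cgh^I} = \hipp{\eta}{\xi}_{E,S}$ in $C_r^*(S,\B\vert_S)$. I would do this through the uniform description of each inner product as a convolution restricted to the relevant unit subgroupoid: a direct computation gives $\hipp{\tilde\eta}{\tilde\xi}_{\Cgh^I} = (\tilde\eta^* * \tilde\xi)\vert_{\go}$, while the induction-in-stages inner product is $\hipp{\eta}{\xi}_{E,S} = \kappa\bigl((\eta^* * \xi)\vert_S\bigr)$, where $\kappa : C^*(S,\B\vert_S)\to C_r^*(S,\B\vert_S)$ is the quotient map. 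Now $\xi \mapsto \tilde\xi$ is a $*$-homomorphism into $\Gamma_c(G,\Cgh^I)$ — it is the Buss--Meyer isomorphism \cite[Theorem 6.2]{BM} followed by the fibrewise quotient $\Gamma_c(G,\Cgh)\to\Gamma_c(G,\Cgh^I)$ — so $\tilde\eta^* * \tilde\xi = \widetilde{\eta^* * \xi}$; and since $E_u = S_u$ for each unit $u$, restricting $\widetilde{\eta^* * \xi}$ to $\go$ recovers precisely $\kappa\bigl((\eta^* * \xi)\vert_S\bigr)$. The two inner products coincide, so $U_0$ preserves inner products.

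It then remains to check that $U_0$ has dense range and that $U$ intertwines the representations. For density I would argue, as in Propositions \ref{prop:ILT} and \ref{prop:sigma}, that $\{\tilde\xi : \xi\in\Gamma_c(E,\B)\}$ is dense in $\Gamma_c(G,\Cgh^I)$ for the inductive limit topology (the Buss--Meyer transforms are inductive-limit dense in $\Gamma_c(G,\Cgh)$ and the fibrewise quotient is surjective), whence it is dense in the module norm by the estimate of Proposition \ref{prop:sigma}; together with the density of $\Gamma_c(S,\B\vert_S)\odot\Hil$ in $\mathsf{X}$ this makes the range of $U_0$ total, so $U_0$ extends to a unitary $U$. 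For the intertwining, the left action is by left convolution, so for $\zeta\in\Gamma_c(E,\B)$ and $v\in\mathsf{X}$ one has $U\bigl(\Ind_S^E(\rho)(\xi)(\zeta\otimes v)\bigr) = \widetilde{\xi * \zeta}\otimes v = (\tilde\xi * \tilde\zeta)\otimes v = \Ind_{\go}^G(\rho)(\tilde\xi)\bigl(U(\zeta\otimes v)\bigr)$, again using multiplicativity of $\xi\mapsto\tilde\xi$; hence $U\,\Ind_S^E(\rho)(\xi) = \Ind_{\go}^G(\rho)(\tilde\xi)\,U$ for all $\xi$.

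I expect the main obstacle to be the inner-product identity of the second paragraph — concretely, confirming that the induction-in-stages $C_r^*(S,\B\vert_S)$-valued inner product on $\Gamma_c(E,\B)$ really is restriction to $S$ followed by $\kappa$, and that this passes consistently to the quotient bundle $\Cgh^I$. This is the step in which the Haar-system disintegration \eqref{eq:haar} and the induction-in-stages machinery of \cite{IW} do the real work; everything else is formal once the $*$-homomorphism property of $\xi\mapsto\tilde\xi$ from \cite{BM} is in hand.
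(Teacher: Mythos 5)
Your proposal is correct and follows the same structural skeleton as the paper's proof: both reduce the problem to the single identity \eqref{eq:innerproduct}, namely that the induction-in-stages $C_r^*(S,\B\vert_S)$-valued inner product on $\Gamma_c(E,\B)$ agrees with the $\Gamma_0(\go,\Cgh^I)$-valued inner product of the transforms in $\Gamma_c(G,\Cgh^I)$. Where you differ is in how that identity is verified. The paper unwinds both sides into explicit iterated integrals using the disintegration \eqref{eq:haar} of the Haar system on $E$ over $G$ and $S$, and checks that the two expressions literally coincide; you instead package each inner product as a convolution restricted to the relevant unit subgroupoid ($(\tilde\eta^* * \tilde\xi)\vert_{\go}$ on one side, $(\eta^* * \xi)\vert_S$ followed by the quotient $\kappa$ on the other) and then invoke multiplicativity of the Buss--Meyer transform from \cite[Theorem 6.2]{BM} to get $\tilde\eta^* * \tilde\xi = \widetilde{\eta^* * \xi}$, finishing by evaluating at units where $E_u = S_u$. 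Your route is more structural and arguably cleaner, but note that it does not really avoid the Haar-system computation: the identity $\tilde\eta^* * \tilde\xi = \widetilde{\eta^* * \xi}$ at the level of $\Gamma_c$ is exactly what \eqref{eq:haar} is used to prove in \cite{BM}, so the integral manipulation is merely outsourced rather than eliminated. What your write-up adds over the paper's is an explicit treatment of the two points the paper dismisses as clear --- the density of the range of $U_0$ (via inductive-limit density and the norm estimate as in Proposition \ref{prop:sigma}) and the intertwining computation showing $U$ spatially implements the isomorphism --- both of which you handle correctly.
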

\begin{proof}
	Clearly $U_0$ has dense range, so we just need to check that it preserves inner products. In fact, it really only suffices to check that
	\begin{equation}
	\label{eq:innerproduct}
		\hip{\xi}{\zeta}_{C_r^*(S, \B)} = \hip{\tilde{\xi}}{\tilde{\zeta}}_{C_r^*(S, \B)}
	\end{equation}
	for all $\xi, \zeta \in \Gamma_c(E, \B)$. To see why this condition is enough, observe that it implies
	\begin{align*}
		\ip{U_0(\xi \otimes \sigma \otimes h)}{U_0(\zeta \otimes \tau \otimes k)} &= \ip{\tilde{\xi} \otimes \sigma \otimes h}{\tilde{\zeta} \otimes \tau \otimes k} \\
			&= \ip{\Ind_{S^{\scriptscriptstyle{(0)}}}^S \pi (\hip{\tilde{\zeta}}{\tilde{\xi}})(\sigma \otimes h)}{\tau \otimes k} \\
			&= \ip{\hip{\tilde{\zeta}}{\tilde{\xi}} \cdot \sigma \otimes h}{\tau \otimes k} \\
			&= \ip{\Ind_{{S^{\scriptscriptstyle{(0)}}}}^S \pi (\hip{\zeta}{\xi})(\sigma \otimes h)}{\tau \otimes k} \\
			&= \ip{\xi \otimes \sigma \otimes h}{\zeta \otimes \tau \otimes k},
	\end{align*}
	so $U_0$ preserves inner products. Therefore, we will focus on proving \eqref{eq:innerproduct}.
	
	Let $\xi, \zeta \in \Gamma_c(E, \B)$. Then for all $t \in S$ we have
	\begin{align*}
		\hip{\zeta}{\xi}(t) &= \int_E \zeta(e)^* \xi(et) \, d\nu_u(e) \\
			&= \int_E \zeta(e^{-1})^* \xi(e^{-1}t) \, d\nu^u(e) \\
			&= \int_G \int_S \zeta((et')^{-1})^* \xi((et')^{-1}h) \, d\mu^{s(e)}(t') \, d\lambda^u(j(e)) \\
			&= \int_G \int_S \zeta(t'^{-1} e^{-1})^* \xi(t'^{-1} e^{-1} t) \, d\mu^{s(e)}(t') \, d\lambda^u(j(e)).
	\end{align*}
	On the other hand,
	\begin{align*}
		\hip{\tilde{\zeta}}{\tilde{\xi}}(u)(t) &= \left(\int_G \tilde{\zeta}(x)^* \tilde{\xi}(x) \, d\lambda_u(x) \right)(t) \\
			&= \int_G \bigl( \tilde{\zeta}(x)^* \tilde{\xi}(x) \bigr)(t) \, d\lambda_u(x),
	\end{align*}
	where
	\begin{align*}
		\bigl(\tilde{\zeta}(x)^* \tilde{\xi}(x)\bigr)(t) &= \int_S \tilde{\zeta}(x)^*(et') \tilde{\xi}(x)((et')^{-1} t) \, d\mu^{s(e)}(t') \\
			&= \int_S \zeta(t'^{-1} e^{-1})^*\xi(t'^{-1} e^{-1} t) \, d\mu^{s(e)}(t').
	\end{align*}
	Putting everything together, the result follows. It is fairly clear that $U$ implements the desired isomorphism, so $C_r^*(G, \Cgh^I) \cong C_r^*(E, \B)$.
\end{proof}

Now we arrive at our intended application---showing that extensions of exact groupoids are exact. Suppose
\[
	\go \to S \to E \to G \to \go
\]
is an extension of groupoids, and let $p : \B \to E$ be a Fell bundle. Put $A = \Gamma_0(\go, \B)$, and let $I \subseteq A$ be an $E$-invariant ideal. Then we have 
an associated Fell bundle $\B_I$ over $E$, and $C_r^*(E, \B_I)$ and $C_r^*(S, \B_I \vert_S)$ sit inside $C_r^*(E, \B)$ and $C_r^*(S, \B \vert_S)$, respectively,
as ideals. We also have a Fell bundle $q : \D \to G$ with $\Gamma_0(G, \D) = C_r^*(S, \B \vert_S)$, and we claim that $J = C_r^*(S, \B_I \vert_S)$ is a $G$-invariant
ideal of its unit $C^*$-algebra. Well, if $x \in G$ and $e \in E_x$, then
\[
	h_e(I(s(x))) = I(r(x))
\]
since $I$ is invariant. Proposition \ref{prop:rieffel} implies that if $h_x$ is the Rieffel correspondence induced by $\D_x$, then we have
\[
	h_x(C_r^*(S_{s(x)}, \B_I)) = C_r^*(S_{r(x)}, \B_I),
\]
or $h_x(J(s(x))) = J(r(x))$. Thus $J$ is a $G$-invariant ideal.

Now assume $S$ is exact. Then the sequence
\[
	0 \to C_r^*(S, \B_I \vert_S) \to C_r^*(S, \B) \to C_r^*(S, \B^I \vert_S) \to 0
\]
is exact. Since $J = C_r^*(S, \B_I \vert_S)$ is a $G$-invariant ideal, we get a sequence
\[
	0 \to C_r^*(G, \D_J) \to C_r^*(G, \D) \to C_r^*(G, \D^J) \to 0,
\]
which is exact provided $G$ is exact. It is not hard to convince oneself that $\D_J$ and $\D^J$ are precisely the Buss-Meyer Fell bundles coming from $\B_I$ and $\B^I$, 
respectively, so we get a commuting diagram
\[
	\xymatrix{
		0 \ar[r] & C_r^*(E, \B_I) \ar[r] \ar[d] & C_r^*(E, \B) \ar[r] \ar[d] & C_r^*(E, \B^I) \ar[r] \ar[d] & 0 \\
		0 \ar[r] & C_r^*(G, \D_I) \ar[r] & C_r^*(G, \D) \ar[r] & C_r^*(G, \D^J) \ar[r] & 0
	}
\]
where the vertical arrows are isomorphisms. Since the bottom row is exact whenever $S$ and $G$ are exact, it follows that the top row is also exact. Thus we have proven
the following result.

\begin{thm}
	Suppose $\go \to S \to E \to G \to \go$ is an extension of second countable, locally compact Hausdorff groupoids. If $S$ and $G$ are exact, then so is $E$.
\end{thm}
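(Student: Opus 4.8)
The plan is to deduce exactness of $E$ from the reduced Buss--Meyer decomposition established above, using exactness of $S$ at the level of the unit $C^*$-algebra and exactness of $G$ at the level of the iterated bundle. Since every groupoid dynamical system arises from a Fell bundle (Example \ref{exmp:gpoidDS}) and its reduced crossed products are identified with reduced Fell bundle $C^*$-algebras, it suffices to prove the formally stronger statement that $E$ is Fell exact; exactness in the sense of the definition then follows immediately by specializing to Fell bundles of the form $r^*\A$. So I would fix a separable Fell bundle $p : \B \to E$ together with an $E$-invariant ideal $I \subseteq A = \Gamma_0(\go, \B)$, and aim to show that
\[
	0 \to C_r^*(E, \B_I) \to C_r^*(E, \B) \to C_r^*(E, \B^I) \to 0
\]
is exact.

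First I would invoke the reduced Buss--Meyer construction to produce a Fell bundle $q : \D \to G$ whose unit $C^*$-algebra is $\Gamma_0(\go, \D) = C_r^*(S, \B \vert_S)$ and which satisfies $C_r^*(G, \D) \cong C_r^*(E, \B)$. The natural candidate for an invariant ideal of the unit algebra is $J = C_r^*(S, \B_I \vert_S)$. To see that $J$ is $G$-invariant I would argue fiberwise: for $x \in G$ and $e \in E_x$, the bundle $\B \vert_{E_x}$ implements a Morita equivalence between $C_r^*(S_{s(x)}, \B)$ and $C_r^*(S_{r(x)}, \B)$, and since $I$ is $E$-invariant we have $h_e(I(s(x))) = I(r(x))$. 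Proposition \ref{prop:rieffel} then shows that the ideals $C_r^*(S_\bullet, \B_I)$ correspond under the Rieffel correspondence $h_x$ induced by $\D_x$, so that $h_x(J(s(x))) = J(r(x))$ and $J$ is invariant.

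With $J$ in hand I would run the transfer. Exactness of $S$ makes the unit-level sequence $0 \to C_r^*(S, \B_I \vert_S) \to C_r^*(S, \B \vert_S) \to C_r^*(S, \B^I \vert_S) \to 0$ exact, which is what lets me identify the quotient bundle $\D^J$ with the iterated bundle built from $\B^I$ and $\D_J$ with the one built from $\B_I$; in particular $C_r^*(G, \D_J) \cong C_r^*(E, \B_I)$ and $C_r^*(G, \D^J) \cong C_r^*(E, \B^I)$. Because $J$ is $G$-invariant and $G$ is exact, Theorem \ref{thm:fellexact} applied to $\D$ gives exactness of
\[
	0 \to C_r^*(G, \D_J) \to C_r^*(G, \D) \to C_r^*(G, \D^J) \to 0.
\]
Assembling the three isomorphisms into a commuting diagram with vertical isomorphisms, exactness of this bottom row forces exactness of the top row, which is the desired sequence for $\B$.

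The hard part will not be the diagram chase but the naturality input that makes the chase legitimate: I expect the real work lies in checking that $\D_J$ and $\D^J$ are genuinely the reduced Buss--Meyer bundles associated to $\B_I$ and $\B^I$, and that the isomorphisms $C_r^*(G, \D) \cong C_r^*(E, \B)$, $C_r^*(G, \D_J) \cong C_r^*(E, \B_I)$, and $C_r^*(G, \D^J) \cong C_r^*(E, \B^I)$ intertwine the inclusion and quotient maps so that the squares commute. Once that compatibility is pinned down, the conclusion is immediate.
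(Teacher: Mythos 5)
Your proposal is correct and follows essentially the same route as the paper: pass to the reduced Buss--Meyer bundle $\D$ over $G$ with unit algebra $C_r^*(S, \B\vert_S)$, show $J = C_r^*(S, \B_I\vert_S)$ is $G$-invariant via Proposition \ref{prop:rieffel}, use exactness of $S$ to identify $\D_J$ and $\D^J$ with the iterated bundles from $\B_I$ and $\B^I$, and apply exactness of $G$ plus a diagram chase. You have also correctly isolated the point the paper treats briefly, namely the compatibility of the three isomorphisms with the inclusion and quotient maps.
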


A groupoid $G$ is said to be \emph{inner exact} if for any open, invariant set $U \subseteq \go$, the sequence
\[
	0 \to C_r^*(G \vert_U) \to C_r^*(G) \to C_r^*(G \vert_F) \to 0
\]
is exact, where $F = \go \backslash U$. If we take $\B$ to be the trivial Fell line bundle over $E$ (so that $C_r^*(E, \B) = C_r^*(E)$), then the same arguments as above can be used 
to establish a result for inner exactness of groupoid extensions.

\begin{thm}
	Suppose $\go \to S \to E \to G \to \go$ is an extension of second countable, locally compact Hausdorff groupoids. If $S$ is inner exact and $G$ is exact, then $E$ is inner exact.
\end{thm}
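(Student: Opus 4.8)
The plan is to specialize the proof that extensions of exact groupoids are exact, taking $\B$ to be the trivial Fell line bundle over $E$ so that $C_r^*(E, \B) = C_r^*(E)$ and $C_r^*(S, \B\vert_S) = C_r^*(S)$, and restricting attention to ideals arising from open invariant subsets of $\go$. First I would fix an open invariant set $U \subseteq \go = E^{(0)}$, set $F = \go \backslash U$, and let $I = C_0(U) \subseteq C_0(\go) = A$ be the corresponding $E$-invariant ideal. Exactly as in the proof of Corollary \ref{cor:fellinnerexact}, the associated Fell bundles $\B_I$ and $\B^I$ are simply the restrictions $\B\vert_{E\vert_U}$ and $\B\vert_{E\vert_F}$, so that $C_r^*(E, \B_I) = C_r^*(E\vert_U)$ and $C_r^*(E, \B^I) = C_r^*(E\vert_F)$.

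The crucial point is to identify the $S$-level short exact sequence used in the original argument. Since $S$ is a group bundle its action on $\go$ is trivial, so $U$ is automatically $S$-invariant (in any case $E$-invariance of $U$ forces $S$-invariance). Because $\B\vert_S$ is again a trivial line bundle, the sequence
\[
	0 \to C_r^*(S, \B_I\vert_S) \to C_r^*(S, \B\vert_S) \to C_r^*(S, \B^I\vert_S) \to 0
\]
is nothing but
\[
	0 \to C_r^*(S\vert_U) \to C_r^*(S) \to C_r^*(S\vert_F) \to 0.
\]
This is precisely the inner exactness sequence for $S$, so the hypothesis that $S$ is inner exact already guarantees that it is exact. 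This is the sole place where the argument diverges from the preceding theorem: because $I$ comes from an open invariant subset of the unit space, inner exactness of $S$ suffices in place of full exactness.

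From here the remainder of the proof goes through verbatim. I would invoke the reduced Buss-Meyer construction to produce a Fell bundle $\D \to G$ with $\Gamma_0(G, \D) = C_r^*(S, \B\vert_S) = C_r^*(S)$ and $C_r^*(G, \D) \cong C_r^*(E)$. Putting $J = C_r^*(S\vert_U)$, the same reasoning as before shows $J$ is a $G$-invariant ideal, and exactness of the $S$-level sequence lets me identify $\D_J$ and $\D^J$ with the reduced Buss-Meyer bundles attached to $\B_I$ and $\B^I$; in particular $C_r^*(G, \D_J) \cong C_r^*(E\vert_U)$ and $C_r^*(G, \D^J) \cong C_r^*(E\vert_F)$. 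Since $G$ is exact, the sequence
\[
	0 \to C_r^*(G, \D_J) \to C_r^*(G, \D) \to C_r^*(G, \D^J) \to 0
\]
is exact, and the commuting diagram with vertical isomorphisms then forces the top row
\[
	0 \to C_r^*(E\vert_U) \to C_r^*(E) \to C_r^*(E\vert_F) \to 0
\]
to be exact as well. As $U$ was an arbitrary open invariant subset of $\go$, this shows $E$ is inner exact.

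The main obstacle to keep an eye on is verifying that $\D_J$ and $\D^J$ really do coincide with the reduced Buss-Meyer bundles of $\B_I$ and $\B^I$ under the weaker hypothesis that $S$ is only inner exact. The identification of the quotient bundle $\D^J$ hinges on the isomorphism $C_r^*(S)/C_r^*(S\vert_U) \cong C_r^*(S\vert_F)$, which is exactly the content of inner exactness of $S$; once this is secured, the fiberwise bundle identifications transfer directly from the construction in the previous theorem.
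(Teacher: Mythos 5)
Your proposal is correct and follows essentially the same route as the paper, which proves this result by specializing the preceding theorem's argument to the trivial Fell line bundle over $E$ and to ideals $I = C_0(U)$ arising from open invariant sets $U \subseteq \go$. You have additionally pinpointed exactly where the weaker hypothesis of inner exactness of $S$ suffices --- namely, the $S$-level sequence reduces to the inner exactness sequence $0 \to C_r^*(S\vert_U) \to C_r^*(S) \to C_r^*(S\vert_F) \to 0$, and the identification of the quotient bundle $\D^J$ with the Buss--Meyer bundle of $\B^I$ rests on the isomorphism $C_r^*(S)/C_r^*(S\vert_U) \cong C_r^*(S\vert_F)$ --- which is a useful clarification of the paper's terser ``the same arguments as above'' justification.
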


\bibliographystyle{amsplain}
\bibliography{/Users/slalonde/Documents/Research/SharedFiles/GroupBib.bib}

\end{document}